\def\@secnumfont{\bfseries\scshape}
\def\section{\@startsection{section}{1}
  \z@{.9\linespacing\@plus\linespacing}{.5\linespacing}%
  {\normalfont\large\bfseries\scshape\centering}}
\def\subsection{\@startsection{subsection}{2}%
  \z@{.5\linespacing\@plus.7\linespacing}{-.5em}%
  {\normalfont\bfseries\scshape}}
\def\subsubsection{\@startsection{subsubsection}{3}%
  \z@{.5\linespacing\@plus.7\linespacing}{-.5em}%
  {\normalfont\scshape}}
\def\specialsection{\@startsection{section}{1}%
  \z@{\linespacing\@plus\linespacing}{.5\linespacing}%
  {\normalfont\centering\large\bfseries\scshape}}
\renewenvironment{proof}[1][\proofname]{\par
\pushQED{\qed}%
\normalfont \topsep4\p@\@plus4\p@\relax
\trivlist
\item[\hskip\labelsep
\bfseries
#1\@addpunct{.}]\ignorespaces
}{%
\popQED\endtrivlist\@endpefalse
}
\newcommand \Dotfill {\leavevmode \leaders \hb@xt@ 6pt{\hss .\hss }\hfill \kern \z@}
\def\@tocline#1#2#3#4#5#6#7{\relax
  \ifnum #1>\c@tocdepth 
  \else
    \par \addpenalty\@secpenalty\addvspace{#2}%
    \begingroup \hyphenpenalty\@M
    \@ifempty{#4}{%
      \@tempdima\csname r@tocindent\number#1\endcsname\relax
    }{%
      \@tempdima#4\relax
    }%
    \parindent\z@ \leftskip#3\relax \advance\leftskip\@tempdima\relax
    \rightskip\@pnumwidth plus4em \parfillskip-\@pnumwidth
    #5\leavevmode\hskip-\@tempdima
      \ifcase #1
       \or\or \hskip 1.65em \or \hskip 3.3em \else \hskip 4.95em \fi%
      #6\nobreak\relax
    \Dotfill
    \hbox to\@pnumwidth{\@tocpagenum{#7}}\par
    \nobreak
    \endgroup
  \fi}
\def\l@section{\@tocline{1}{0pt}{1pc}{}{\scshape}}
\renewcommand{\tocsection}[3]{%
\indentlabel{\@ifnotempty{#2}{\ignorespaces#1 #2.\hskip 0.7em}}#3}
\def\l@subsection{\@tocline{2}{0pt}{1pc}{5pc}{}}
\def\l@subsubsection{\@tocline{3}{0pt}{1pc}{7pc}{}}
\numberwithin{equation}{section}
\newtheoremstyle{mytheorem}{.7\linespacing\@plus.3\linespacing}{.7\linespacing\@plus.3\linespacing}%
     {\itshape}
     {}
     {\bfseries}
     {. }
     {0.3ex}
     {\thmname{{\bfseries #1}}\thmnumber{ {\bfseries #2}}\thmnote{ (#3)}}  
\theoremstyle{mytheorem}
\newtheorem{theorem}{Theorem}[section]
\newtheorem{lemma}[theorem]{Lemma}
\newtheorem{proposition}[theorem]{Proposition}
\newtheorem{remark}[theorem]{Remark}
\newcommand{\bbE}{{\ensuremath{\mathbb E}} }
\newcommand{\bbP}{{\ensuremath{\mathbb P}} }
\newcommand{\bbT}{{\ensuremath{\mathbb T}} }
\newcommand{\cI}{{\ensuremath{\mathcal I}} }
\newcommand{\cM}{{\ensuremath{\mathcal M}} }
\newcommand{\cR}{{\ensuremath{\mathcal R}} }
\newcommand{\cW}{{\ensuremath{\mathcal W}} }
\newcommand{\cZ}{{\ensuremath{\mathcal Z}} }
\DeclareMathSymbol{\leqslant}{\mathalpha}{AMSa}{"36} 
\DeclareMathSymbol{\geqslant}{\mathalpha}{AMSa}{"3E} 
\DeclareMathSymbol{\eset}{\mathalpha}{AMSb}{"3F}     
\newcommand{\sumtwo}[2]{\sum_{\substack{#1 \\ #2}}} 
\newcommand{\be}{\begin{equation}}
\newcommand{\ee}{\end{equation}}
\newcommand{\R}{\mathbb{R}}
\newcommand{\Z}{\mathbb{Z}}
\newcommand{\N}{\mathbb{N}}
\def\bs{\boldsymbol}
\newcommand{\PEfont}{\mathrm}
\newcommand{\p}{\ensuremath{\PEfont P}}
\newcommand{\E}{\ensuremath{\PEfont  E}}
\renewcommand{\P}{\p}
\newcommand\bP{\ensuremath{\bs{\mathrm{P}}}}
\newcommand\bE{\ensuremath{\bs{\mathrm{E}}}}
\DeclareMathOperator{\bbvar}{\ensuremath{\mathbb{V}ar}}
\DeclareMathOperator{\bbcov}{\ensuremath{\mathbb{C}ov}}
\newcommand{\ind}{\mathds{1}}
\newcommand{\eps}{\varepsilon}
\renewcommand{\epsilon}{\varepsilon}
\renewcommand{\theta}{\vartheta}
\renewcommand{\rho}{\varrho}
\newenvironment{myenumerate}{%
\renewcommand{\theenumi}{\arabic{enumi}}%
\renewcommand{\labelenumi}{{\rm(\theenumi)}}%
\begin{list}{\labelenumi}
	{%
	\setlength{\itemsep}{0.4em}%
	\setlength{\topsep}{0.5em}%
	\setlength\leftmargin{2.45em}%
	\setlength\labelwidth{2.05em}%
	\setlength{\labelsep}{0.4em}%
	\usecounter{enumi}%
	}%
	}%
{\end{list}
}
\renewenvironment{enumerate}{
\begin{myenumerate}}%
{\end{myenumerate}}
\newenvironment{myitemize}{%
\begin{list}{$\bullet$}%
 	{%
	\setlength{\itemsep}{0.4em}%
	\setlength{\topsep}{0.5em}%
	\setlength\leftmargin{2.65em}%
	\setlength\labelwidth{2.65em}%
	\setlength{\labelsep}{0.4em}%
	}%
	}%
{\end{list}}
\renewenvironment{itemize}{
\begin{myitemize}}%
{\end{myitemize}}
\date{\today}
\newcommand\dd{\mathrm{d}}
\newcommand\sfG{\mathsf G}
\newcommand\sfQ{\mathsf Q}
\newcommand\sfT{\mathsf T}
\newcommand\sfX{\mathsf X}
\newcommand\sfs{\mathsf s}
\newcommand\bx{\boldsymbol{x}}
\newcommand\by{\boldsymbol{y}}
\newcommand\bz{\boldsymbol{z}}
\newcommand\even{\mathrm{even}}
\newcommand\odd{\mathrm{odd}}
\newcommand{\ev}[1]{[\![ #1 ]\!]}
\newcommand\scrC{\mathscr{C}}
\newcommand\scrG{\mathscr{G}}
\newcommand\scrZ{\mathscr{Z}}
\newcommand\scrX{\mathscr{X}}
\newcommand\scrM{\mathscr{M}}
\newcommand\scrK{\mathscr{K}}
\newcommand\rme{\mathrm{e}}
\newcommand\GMC{\mathrm{GMC}}
\title[2\MakeLowercase{d}SHF ain't GMC]{The critical 2d Stochastic Heat Flow is not\\
a Gaussian Multiplicative Chaos}
\begin{document}

\author[F. Caravenna]{Francesco Caravenna}
\address{Dipartimento di Matematica e Applicazioni\\
 Universit\`a degli Studi di Milano-Bicocca\\
 via Cozzi 55, 20125 Milano, Italy}
\email{francesco.caravenna@unimib.it}

\author[R. Sun]{Rongfeng Sun}
\address{Department of Mathematics\\
National University of Singapore\\
10 Lower Kent Ridge Road, 119076 Singapore
}
\email{matsr@nus.edu.sg}

\author[N. Zygouras]{Nikos Zygouras}
\address{Department of Mathematics\\
University of Warwick\\
Coventry CV4 7AL, UK}
\email{N.Zygouras@warwick.ac.uk}

\keywords{Directed Polymer in Random Environment,
Gaussian Multiplicative Chaos,
Gaussian Correlation Inequality, Stochastic Heat Equation, Stochastic Heat Flow, KPZ Equation}
\subjclass[2010]{Primary: 82B44;  Secondary: 35R60, 60H15, 82D60}

\maketitle

\begin{abstract}
The critical $2d$ Stochastic Heat Flow (SHF) is a stochastic process of random measures on
$\R^2$, recently constructed in \cite{CSZ23}.
We show that this process falls outside the class of Gaussian Multiplicative Chaos (GMC), in the
sense that it cannot be realised as the exponential of a (generalised) Gaussian field. We achieve this
by deriving strict lower bounds on the moments of the SHF that are of independent interest.
\end{abstract}

\section{Introduction}
The critical $2d$ Stochastic Heat Flow (SHF) is a stochastic process
of random measures on $\R^2$, constructed in \cite{CSZ23}
as a universal limit of random polymer models. It is the natural candidate solution
of the (ill-defined) critical $2d$ Stochastic Heat Equation:
\begin{align}\label{eq:SHE}
	\partial_t u(t,x) =\frac{1}{2} \Delta u(t,x) +\beta \, \xi(t,x) \, u(t,x) \,,\qquad t>0, \ x\in\R^2,
\end{align}
where $\xi(t,x)$ represents space-time
white noise, that is a Gaussian field, delta-correlated in space and time.
The term {\it critical} refers both to the fact that \emph{dimension~$2$ is a critical dimension},
in the sense of
singular stochastic PDEs \cite{H14, GIP15} and renormalisation theory \cite{Kup14}, and that
\emph{a critical scaling in the noise strength $\beta$ is needed}, see \eqref{beta-eps} below.

The criticality of
dimension~$d=2$ for the Stochastic Heat Equation \eqref{eq:SHE}
can be seen through a scaling argument,
in the spirit of renormalisation. Indeed, in general dimension $d\geq 1$, one can note that the rescaled function
$\tilde u(t, x) := u(\epsilon^2 t, \epsilon x)$ solves
\begin{align*}
\partial_{ t} \tilde u =\frac{1}{2} \Delta \tilde u +
\beta\, \epsilon^{1-\tfrac{d}{2}} \,\tilde \xi \, \tilde u \,,
\qquad \tilde t>0, \ \tilde x\in\R^d,
\end{align*}
where $\tilde \xi(t, x) := \epsilon^{1+\frac{d}{2}} \xi(\epsilon^2 t,
\epsilon x)$ is a new space-time white noise.
One now sees that, as $\epsilon\to 0$, when
$d<2$ the multiplicative factor
$\epsilon^{1-d/2}$
attenuates the small scale effects of the noise, while these effects are amplified when $d>3$.
On the other hand, when $d=2$,
the exponent $1-\tfrac{d}{2}$ vanishes and the extent to which
the noise influences the solution is not apparent.

\smallskip

In this paper we obtain explicit lower bounds on the moments of the SHF.
Besides their own interest, these bounds
imply that \emph{the SHF is not the ``exponential of a Gaussian field''}
in the sense of Gaussian Multiplicative Chaos (GMC).
This result provides insight on the
critical $2d$ Kardar-Parisi-Zhang (KPZ) equation:
\begin{align}\label{eq:KPZ}
	\partial_t h(t,x) =\frac{1}{2} \Delta h(t,x) +
	\frac{1}{2} |\nabla h(t,x)|^2 + \beta \, \xi(t,x) \,,\qquad t>0, \ x\in\R^2\,.
\end{align}
Indeed, when the solution $u(t,x)$ of the
Stochastic Heat Equation \eqref{eq:SHE} is function valued,
its logarithm $h(t,x) := \log u(t,x)$ is
a solution of the KPZ equation \eqref{eq:KPZ}. Since the critical $2d$ SHF
is the candidate solution of \eqref{eq:SHE}, the fact that it is not a GMC suggests
that \emph{the critical $2d$ KPZ solution
(yet to be constructed) is likely not a Gaussian field}.

\smallskip

In the rest of this introduction, we first recall the construction of the SHF
from \cite{CSZ23}; then we state our main results on the moments of the SHF
and the comparison with GMC; finally, we discuss related results from
the literature and outline future directions of research.

\subsection{Reminder: the critical $2d$ SHF}\label{S:SHF}
A key difficulty in making sense of equation \eqref{eq:SHE} is that its solution $u(t,x)$ is expected
to be a genuine distribution on $\R^2$, not a function,
so the product $\xi(t,x) \, u(t,x)$ is not well-defined.
A natural strategy to make sense of it is to
\begin{enumerate}
\item \emph{regularise the equation},
so that a well-defined approximating solution exists;
\item prove that the approximating solution \emph{has a non-trivial limit} as
the regularisation is removed (and the noise strength $\beta$ is suitably rescaled).
\end{enumerate}

This approach was recently carried out in \cite{CSZ23},
where equation \eqref{eq:SHE} is regularised via discretisation of space and time, i.e.\
white noise $\xi$ is replaced by a family
of i.i.d. random variables $\omega=(\omega(n,x))_{n\in \N, x\in \Z^2}$ with law $\bbP$,
called \emph{disorder}, which satisfy
\begin{equation}\label{eq:lambda}
\begin{gathered}
	\bbE[\omega]=0 \,, \qquad \bbE[\omega^2]=1 \,, \qquad
	\exists\, \beta_0>0: \quad 	\lambda(\beta)
	:= \log \bbE[\rme^{\beta\omega}] < \infty \quad \forall \beta \in [0, \beta_0] \,.
\end{gathered}
\end{equation}
Replacing derivatives in the Stochastic Heat Equation \eqref{eq:SHE} by suitable difference
operators,
the solution is the partition function of
\emph{directed polymers in random environment}:
\begin{align} \label{eq:paf}
	Z_{M,N}^{\beta}(x,y) &=
	\E\bigg[ \rme^{\sum_{n=M+1}^{N-1}
	\{\beta \omega(n,S_n) - \lambda(\beta)\}} \,\, \ind_{\{S_N=y\}}
	\,\bigg|\, S_M = x \bigg] \,,
\end{align}
where $\E$ is the expectation with respect to $S=(S_n)$,
the simple random walk on $\Z^2$.
Note that \eqref{eq:paf} is a discretised Feynman-Kac formula
for \eqref{eq:SHE} on the time interval $[M,N]$, up to time-reversal and
with a delta initial condition at time~$M$.
An alternative regularisation of \eqref{eq:SHE}, via mollification, is discussed
in Subsection~\ref{sec:background} below.

The main result of \cite{CSZ23} is that the random field of partition
functions $Z_{M,N}^{\beta}(x,y)$, under diffusive
rescaling of space and time and for a suitable critical scaling of $\beta = \beta_N$,
converges in law as $N\to\infty$ to a unique \emph{measure valued
random field} $\mathscr{Z}_{s,t}^\theta(\dd x , \dd y)$.
More precisely, we define the diffusively rescaled random field of partition
functions:\footnote{The factor  $\frac{1}{4}$ in \eqref{eq:rescZmeas} is due to the periodicity
of the simple random walk,
while the multiplication by $N$ is due to the local limit theorem: $\bbE[Z_{M,N}^{\beta_N}(w,z)]
= \P(S_{N} = z \,|\, S_{M} = w) = O(\frac{1}{N-M}) = O(\frac{1}{N})$
for $M/N \le c < 1$.}
\begin{equation} \label{eq:rescZmeas}
	\cZ^{\beta}_N = \bigg(\cZ^{\beta}_{N;\, s,t}(\dd x, \dd y) :=
	\frac{N}{4} \, Z_{\ev{Ns}, \ev{Nt}}^{\beta,\,\omega}(\ev{\sqrt{N} x},
	\ev{\sqrt{N} y}) \, \dd x \, \dd y
	\bigg)_{0 \le s \le t < \infty}
\end{equation}
where $\dd x \, \dd y$ is the Lebesgue measure on $\R^2 \times \R^2$ and
$\ev{Ns}$ is the even integer closest to $Ns$, while $\ev{\sqrt{N}x}$
is the point
closest to $\sqrt{N}x\in\R^2$ in the lattice $\Z^2_{\rm even}$, where
we set
\begin{equation} \label{eq:even}
	\Z^d_{\rm even}:=\{(z_1, \ldots, z_d)\in \Z^d \colon
	z_1+\ldots+ z_d \text{ is even}\} \,.
\end{equation}

We next rescale $\beta = (\beta_N)_{N\in\N}$ in  a
\emph{critical window},
defined by \eqref{intro:sigma}-\eqref{eq:RN0} in Appendix~\ref{sec:crit-wind},
which separates the weak and strong disorder phases of $2d$ directed polymers \cite{CSZ17b}.
When the disorder $\omega$ has a symmetric distribution (for simplicity), this reads as follows:
\begin{equation}\label{eq:betasimple}
	\beta_N^2 = \frac{\pi}{\log N} \bigg( 1 + \frac{\rho + o(1)}{\log N}\bigg)
	\qquad \text{for some } \rho\in\R \,.
\end{equation}
To have a universal parametrisation,
our results will be formulated using a slightly different parameter $\theta$,
see \eqref{intro:sigma},
which differs from $\rho$ in \eqref{eq:betasimple}
by a constant, see \cite[eq.~(1.17)]{CSZ19b}.

\smallskip

We can now state the main result of \cite{CSZ23}.

\begin{theorem}[The critical $2d$ SHF \cite{CSZ23}]\label{thm:2dSHF}
Fix $\beta_N$ in the critical window \eqref{intro:sigma}-\eqref{eq:RN0}
for a given $\theta\in \R$. The process of random measures
$\cZ^{\beta_N}_N =
(\cZ^{\beta_N}_{N;\, s,t}(\dd x, \dd y))_{0\le s \le t < \infty}$ defined in \eqref{eq:rescZmeas}
converges in finite dimensional distributions $($as $N\to\infty)$ to a {\it unique} limit
\begin{equation*}
	\mathscr{Z}^\theta = (\mathscr{Z}_{s,t}^\theta(\dd x , \dd y))_{0 \le s \le t <\infty} \,,
\end{equation*}
called the \emph{critical $2d$ Stochastic Heat Flow}.
\end{theorem}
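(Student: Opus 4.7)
The plan is to establish convergence in finite-dimensional distributions via the method of moments, exploiting the polynomial chaos decomposition of the polymer partition function. First, I would write
\begin{align*}
Z_{M,N}^{\beta}(x,y) = q_{N-M}(y-x) + \sum_{k\geq 1} \sum_{\substack{M<n_1<\ldots<n_k<N \\ x_1,\ldots,x_k\in\Z^2}} \Bigl\{q_{n_1-M}(x_1-x) \prod_{i=1}^{k-1} q_{n_{i+1}-n_i}(x_{i+1}-x_i)\, q_{N-n_k}(y-x_k)\Bigr\}\prod_{i=1}^{k} \zeta(n_i, x_i),
\end{align*}
where $q_n$ is the simple random walk transition kernel and $\zeta(n,x):=\rme^{\beta\omega(n,x)-\lambda(\beta)}-1$ are i.i.d.\ centred variables with $\E[\zeta^2]=:\sigma_\beta^2\sim\beta^2$. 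Orthogonality across chaos orders reduces every mixed moment of $Z^\beta$ to a combinatorial sum over families of random walk trajectories.

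Second, for the variance the terms of the expansion decouple pair-by-pair, giving $\sum_{k\geq 0}\sigma_\beta^{2k}\sum\prod q^2$; by the local central limit theorem this is governed by the expected number of collisions of two independent simple random walks on $\Z^2$, which grows like $\pi^{-1}\log N$. The critical tuning $\beta_N^2\sim \pi/\log N$ of~\eqref{eq:betasimple} is precisely the balance that yields a non-trivial, finite limiting variance for $\cZ^{\beta_N}_{N;s,t}(\phi\otimes\psi)$ as $N\to\infty$. For higher moments $h\geq 3$, I would bound $\E[\cZ^{\beta_N}_{N;s,t}(\phi\otimes\psi)^h]$ via a Gaussian replica-type expression involving pairwise collision counts of $h$ independent walks, then establish convergence to an explicit limit through a careful combinatorial analysis of walk intersections at the critical scale. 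Hypercontractivity combined with a Lindeberg-type swap should deliver universality over the disorder class~\eqref{eq:lambda}, and a multi-time/multi-test-function extension handles the full joint moments over several intervals $(s_i,t_i)$.

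Third and most delicately, one must upgrade moment convergence to convergence in law, and identify the limit as a \emph{unique} random measure. The natural strategy is to characterise the limit through a semi-explicit ``coalescing'' form of its joint moments --- integrals over families of planar Brownian motions weighted by a renormalised pairwise interaction (already visible at second order through a Dickman-type kernel built from the subcritical $2d$ stochastic heat equation) --- and then to verify that these limiting moments admit a unique measure-valued process realising them, using positivity of the prelimit partition functions. I expect this identification-and-uniqueness phase to be the main obstacle: one has to match the delicate critical-scale SRW intersection combinatorics to a clean continuum object, while the limit is genuinely measure-valued (no function density exists) and its $h$-th moments appear to grow fast enough that moment uniqueness is not automatic, which is in fact the very phenomenon that the present paper exploits to rule out GMC.
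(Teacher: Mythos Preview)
This theorem is not proved in the present paper; it is quoted from \cite{CSZ21}. Nonetheless, your proposed approach has a genuine gap that the paper itself highlights, and which you partly acknowledge at the end without resolving.

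The method of moments cannot close the argument. As the paper explicitly states in Subsection~\ref{sec:background}, the moments of the SHF ``grow too fast to uniquely determine the limiting random measure.'' Your first two steps (chaos expansion, variance asymptotics, convergence of higher moments via collision combinatorics) are indeed what was carried out in \cite{BC98,CSZ19b,GQT21,Che21}, and they yield tightness plus identification of all moments of subsequential limits. But Carleman's condition fails: the $h$-th moment of $\scrZ_t^\theta(\varphi)$ grows at least like $\exp(c\,h^2)$ (this is essentially the content of \eqref{eq:mombd0} combined with the logarithmic divergence of the second moment), so the moment problem is indeterminate. Your third step therefore requires an idea you do not supply: ``verify that these limiting moments admit a unique measure-valued process realising them'' is exactly the statement that cannot be obtained from the moments alone, and positivity of the prelimit does not help here.

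The actual proof in \cite{CSZ21} bypasses moments entirely for the uniqueness step. It uses a multiscale coarse-graining of the partition function into a finite product of ``mesoscopic'' blocks, each of which has uniformly bounded moments of all orders; a Lindeberg-type swapping argument then shows that any two subsequential limits must agree in law block by block, hence globally. The key new idea is controlling the \emph{fourth} moment of the coarse-grained pieces uniformly in the scale --- something your moment-of-the-whole-field approach does not attempt and which is where the real work of \cite{CSZ21} lies.
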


The convergence in distribution in Theorem~\ref{thm:2dSHF}
takes place in the space of
locally finite measures on $\R^2 \times \R^2$,
equipped with the topology of vague convergence:
\begin{equation*}
	\mu_N \rightarrow \mu \quad \ \iff \quad \
	\int \phi(x,y) \, \mu_N(\dd x, \dd y) \to \int \phi(x,y) \, \mu(\dd x, \dd y)
	\quad \forall \phi\in C_c(\R^2\times \R^2) \,.
\end{equation*}

\subsection{Main result I: SHF vs. GMC}

We focus on the SHF's one-time marginal:
\begin{equation} \label{eq:SHF1}
	\mathscr{Z}_{t}^\theta(\dd x) :=
	\int_{y \in \R^2} \mathscr{Z}_{0,t}^\theta(\dd x , \dd y)  \,.
\end{equation}
This is a stochastic process of \emph{log-correlated random measures on $\R^2$},
see \eqref{eq:second}-\eqref{eq:rhot} below.
Higher moments of the SHF admit explicit series expansions,
see \eqref{eq:Zmomh}-\eqref{eq:Zmomh-kernel} below,
which stem from the works \cite{GQT21, Che21, CSZ19b, BC98}.
However, as we will show below,
the SHF moments grow too fast to uniquely determine the field.

In the subcritical regime $\beta_N^2 \sim \hat\beta \, \pi / \log N$ with $\hat\beta < 1$
--- that is, strictly below the critical window \eqref{eq:betasimple} that we consider here ---
the logarithm of the directed polymer partition function
displays Gaussian fluctuations  \cite{CSZ17b,Gu20,CSZ20}.
This suggests that, \emph{in the subcritical regime}, partition functions
should be close to the exponential of a Gaussian field.

It is natural to wonder whether a similar picture still holds true \emph{at criticality}:
is the critical $2d$ SHF the exponential of a Gaussian field in the sense of
{\it Gaussian Multiplicative Chaos} (GMC)?
Our first main result shows that this is not the case.

\begin{theorem}\label{thm:main}
The critical $2d$ Stochastic Heat Flow is not a Gaussian Multiplicative Chaos.
\end{theorem}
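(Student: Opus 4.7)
The plan is to compare correlation functions. The key observation is that any Gaussian Multiplicative Chaos $M$ of unit intensity has its $h$-point correlation function determined by the two-point one: writing formally $M(\mathrm{d}x) = \exp(X(x) - \tfrac{1}{2}\mathbb{E}[X(x)^2])\,\mathrm{d}x$ with covariance $K(x,y)=\mathbb{E}[X(x)X(y)]$, one has
\begin{equation*}
	\rho^{(h)}_M(x_1,\ldots,x_h) \;=\; \exp\Bigl(\textstyle\sum_{i<j}K(x_i,x_j)\Bigr) \;=\; \prod_{1\le i<j\le h}\rho^{(2)}_M(x_i,x_j)\,.
\end{equation*}
Hence, if $\mathscr{Z}^\theta_t$ were a GMC, its $h$-th moment against any non-negative test function $\phi$ would equal the integral of $\phi^{\otimes h}$ against $\prod_{i<j}\rho^{(2)}_t(x_i,x_j)$, because the two-point function of the putative GMC must agree with that of the SHF. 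It therefore suffices to exhibit a single $\phi$ and some $h\ge 3$ for which this integral is \emph{strictly} smaller than $\mathbb{E}[\mathscr{Z}^\theta_t(\phi)^h]$: this is exactly the strict lower bound advertised in the abstract.

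To make the comparison quantitative, I would work from the chaos/replica series representations of SHF moments developed in \cite{GQT21,Che21,CSZ19b}. Each term indexes a collision pattern of $h$ Brownian trajectories interacting through the renormalised critical two-body kernel $G^\theta$. The relevant structural point is that, already for $h=3$, these patterns split into \emph{disjoint-pair} configurations, in which at any given time at most one pair of walks is interacting, and \emph{linked} configurations, in which a third walk is chained into an ongoing pair (the simplest example being a triangle where the pairs $\{1,2\}$, then $\{2,3\}$, then $\{1,3\}$ interact in turn). The disjoint-pair family is what produces the GMC product $\prod_{i<j}\rho^{(2)}(x_i,x_j)$; the linked family is genuinely additional and cannot appear in any GMC.

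The heart of the argument is then a strict positivity bound on the linked contribution. The approach I would take is: first, use the Gaussian Correlation Inequality applied to the pair-interaction kernel $G^\theta$ to bound the disjoint-pair sum from below by the GMC product form $\prod_{i<j}\rho^{(2)}(x_i,x_j)$; then isolate a sub-family of linked diagrams (the triangles above being the natural candidate) whose contribution admits an explicit strictly positive lower bound obtained from the local structure of $G^\theta$ near the diagonal. Integrating against $\phi^{\otimes h}$ for a non-negative $\phi$ concentrated at the right scale then produces the strict inequality above, contradicting the GMC identity.

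The main difficulty I anticipate is quantitative: at criticality the kernel $G^\theta$ is only marginally integrable, the chaos series converges merely conditionally, and the disjoint-pair and linked contributions are of comparable order. Extracting a uniform strictly positive residual from the linked diagrams --- without it being absorbed by the marginal pairwise sums or cancelled by higher-order terms --- is the delicate step, and the Gaussian Correlation Inequality appears to be precisely the right tool for handling the pairwise part with the correct sign so that only the genuinely non-Gaussian residual survives.
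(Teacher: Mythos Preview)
Your high-level strategy --- compare the $h$-th moment of the SHF to that of a GMC with matching first and second moments, and show the former is strictly larger --- is exactly what the paper does. However, the mechanism you propose for the strict inequality does not work as stated.

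The decomposition you sketch (``disjoint-pair'' diagrams reproduce the GMC product $\prod_{i<j}\rho^{(2)}$, ``linked'' diagrams are a strictly positive extra) does not match the actual structure. In the SHF moment expansion \eqref{eq:Zmomh-kernel}, every term is a globally time-ordered sequence of pair interactions with consecutive pairs distinct but typically sharing an index; there is no natural sub-sum equal to the GMC product. Conversely, expanding $\prod_{i<j}(1+K^{(2)}_t(x_i,x_j))$ via the renewal representation of $K^{(2)}_t$ yields independent renewal sequences for each pair with \emph{no} global time ordering (see Lemmas~\ref{th:prod2}--\ref{th:prod3}). The two expansions interleave in a complicated way, and the excess of the SHF over the GMC is not a manifestly positive residue of linked diagrams.

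Relatedly, the Gaussian Correlation Inequality is not applied to $G_\theta$ or to the diagram expansion. In the paper (Section~\ref{via-gci}) it is applied to the collision-local-time representation $\bE_{\vec x}\bigl[\prod_{i<j} e^{\beta_\eps^2 L^{i,j}_{\eps,t}}\bigr]$: GCI for the Gaussian law of the Brownian paths (level sets of $J_\eps$ are balls) gives $\bE[\prod_{i<j} e^{\beta_\eps^2 L^{i,j}}] \ge \prod_{i<j}\bE[e^{\beta_\eps^2 L^{i,j}}]$, which lower-bounds the SHF $m$-th moment by the second moment to the power $\binom{m}{2}$. This matches the GMC $m$-th moment only \emph{asymptotically} as the averaging scale $\delta\downarrow 0$ (Proposition~\ref{P:momasy}), so GCI alone yields $\ge$, not $>$. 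The strict improvement $1+\eta$ comes from keeping two pairs $\{1,2\},\{1,3\}$ together in the GCI factorisation (see \eqref{eq:mombd6}) and then running a conditional-variance argument (Lemma~\ref{L:lowerbd}) showing $\bE[\Phi(W^{(1)})^2]/\bE[\Phi(W^{(1)})]^2 \ge 1+\eta$ uniformly in~$\delta$. This variance step, not positivity of linked diagrams, is the source of non-Gaussianity and is the part your outline is missing.

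The paper also gives an independent proof for $h=3$ (Section~\ref{via-integral}) working directly with the series: an explicit spatial Gaussian integration of $\mathbf{g}^{(m)}$ (Proposition~\ref{th:comutation-gm}) gives a lower bound $I_t^{(3)}(\varphi)$ on the SHF side, a renewal reorganisation of the products $K^{(2)}K^{(2)}$ and $K^{(2)}K^{(2)}K^{(2)}$ gives an upper bound $\widetilde I_t^{(3)}(\varphi)$ on the GMC side, and the comparison reduces to a pointwise inequality $\scrG_{a_1,a_2}(\varphi)>\widetilde\scrG_{a_1,a_2}(\varphi)$ between two explicit triple heat-kernel integrals (Proposition~\ref{th:bound-comp}).
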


We will recall the definition of GMC in Section~\ref{GMC-recap}.
We point out that GMC has been studied extensively and has applications in many contexts,
including Liouville quantum gravity,
turbulence, zeroes of characteristic polynomials etc.
A comprehensive review of its connections to various
fields in probability and mathematical physics, as well as a nice introduction to its properties,
is given in \cite{RV14}.

\smallskip

Theorem~\ref{thm:main} suggests
that, in the critical window \eqref{eq:betasimple},
the logarithm of the partition functions has a \emph{non-Gaussian limit}.
Such a limit would then be the natural candidate solution of the critical $2d$ KPZ equation \eqref{eq:KPZ}.
Of course, putting this conjecture on firm ground
will require further work --- we cannot just take
the logarithm of the SHF, which is a random measure --- but our results provide
an indication for the emergence of non-Gaussianity in the $2d$ KPZ equation.

It is also an interesting question whether the critical $2d$ Stochastic Heat Flow is absolutely continuous w.r.t.\ {\em some} GMC. Our current
techniques (based on comparison of moments) seem insufficient to resolve this question.

\subsection{Main results II: lower bounds for the SHF moments}

Our next main results are explicit lower bounds
on the moments of the critical $2d$ SHF.
These bounds are the key to proving Theorem~\ref{thm:main},
because they show that
\emph{the moments of the SHF are strictly larger than those of a corresponding GMC},
in a sense that we now make precise.

The one-time marginal  $\mathscr{Z}_{t}^\theta(\dd x)$ of the SHF,
see \eqref{eq:SHF1}, is a random measure on $\R^2$.
Let us denote by $\mathscr{M}_t^\theta(\dd x)$ the GMC on $\R^2$
with the same first and second moments of the SHF:
\begin{align}
	\label{eq:first}
	\bbE\big[\mathscr{M}_t^\theta(\dd x)\big] &= \bbE\big[\mathscr{Z}_{t}^\theta(\dd x)\big]
	= \tfrac{1}{2} \, \dd x \,, \\
	\label{eq:second}
	\bbE\big[\mathscr{M}_t^\theta(\dd x) \, \mathscr{M}_t^\theta(\dd y)\big]
	& = \bbE\big[ \mathscr{Z}_{t}^\theta(\dd x) \,
	\mathscr{Z}_{t}^\theta(\dd y) \big]
	= \tfrac{1}{4}\, \scrK_{t,\theta}^{(2)}(x,y) \, \dd x \, \dd y \,,
\end{align}
where $\scrK_{t,\theta}^{(2)}(x,y)$ is known, see\ \eqref{eq:Zmom2},
and it is \emph{log-divergent} along the diagonal (see \eqref{eq:ktasy}):
\begin{equation}\label{eq:rhot}
	\scrK_{t,\theta}^{(2)}(x,y)
	\sim C_{t,\theta} \, \log\frac{1}{|y-x|} \qquad \text{as } \, |y-x| \to 0 \,.
\end{equation}
As will be noted after \eqref{eq:wefix},
the Gaussian field underlying such a GMC is \emph{log-log-correlated},
i.e.\ its covariance kernel satisfies $k_t(x, y)\sim \log \log \frac{1}{|y-x|}$
as $|y-x|\to 0$.\footnote{For a comparison,
the much studied Gaussian Free Field on $\R^2$ is  log-correlated, hence the corresponding
GMC is polynomially correlated.}

\smallskip

We first compare the third moment of the SHF
$\mathscr{Z}_{t}^\theta(\varphi) :=
\int_{\R^2} \varphi(x) \, \mathscr{Z}_{t}^\theta(\dd x)$
with that of the GMC $\mathscr{M}_{t}^\theta(\varphi) :=
\int_{\R^2} \varphi(x) \, \mathscr{M}_{t}^\theta(\dd x)$
averaged over integrable functions $\varphi : \R^2 \to \R$.

\begin{theorem}[Third moment lower bound]\label{th:3mom}
For $t > 0$ and $\theta \in \R$,
let $\mathscr{M}_t^\theta(\dd x)$ be the GMC with the same first and second moments
as the SHF $\mathscr{Z}_t^\theta(\dd x)$, see \eqref{eq:first}-\eqref{eq:second}.
If $\varphi$ is the indicator function
of a ball, or the heat kernel on $\R^2$, see \eqref{eq:gt}, we have
\begin{equation} \label{eq:lb3-SHF-GMC}
	\bbE\big[ \mathscr{Z}_{t}^\theta(\varphi)^3 \big] >
	\bbE\big[ \mathscr{M}_{t}^\theta(\varphi)^3 \big] \,,
\end{equation}
hence $\mathscr{Z}_{t}^\theta(\dd x) \ne \mathscr{M}_{t}^\theta(\dd x)$.
\end{theorem}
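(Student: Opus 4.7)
The plan is to compute both sides of \eqref{eq:lb3-SHF-GMC} via explicit third-moment kernels and reduce the theorem to a strict integrated comparison. On the GMC side, since $\mathscr{M}_t^\theta$ has mean measure $\tfrac12\,\dd x$ and two-point function $\tfrac14\,\scrK_{t,\theta}^{(2)}(x,y)$, writing $\mathscr{M}_t^\theta(\dd x) = \tfrac12\,\rme^{X(x)-\tfrac12\bbE[X(x)^2]}\dd x$ for a generalised Gaussian field $X$ with covariance $C(x,y)=\log \scrK_{t,\theta}^{(2)}(x,y)$ and using the Gaussian exponential identity gives
\begin{equation*}
	\bbE\big[\mathscr{M}_t^\theta(\varphi)^3\big] = \frac{1}{8}\int_{(\R^2)^3} \varphi(x_1)\varphi(x_2)\varphi(x_3) \prod_{1\le i<j\le 3}\scrK_{t,\theta}^{(2)}(x_i,x_j)\,\dd x_1\,\dd x_2\,\dd x_3.
\end{equation*}
On the SHF side, the third-moment representation recalled in \eqref{eq:Zmomh}-\eqref{eq:Zmomh-kernel} yields the analogous formula with $\prod_{i<j}\scrK_{t,\theta}^{(2)}(x_i,x_j)$ replaced by a kernel $\scrK_{t,\theta}^{(3)}(x_1,x_2,x_3)$ arising from a Dyson-type time-ordered series of successive two-body interactions among three Brownian motions, the elementary two-body block being the same kernel that produces $\scrK_{t,\theta}^{(2)}$. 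The theorem then reduces to the strict integrated inequality
\begin{equation*}
	\int_{(\R^2)^3} \varphi(x_1)\varphi(x_2)\varphi(x_3)\Big(\scrK_{t,\theta}^{(3)}(x_1,x_2,x_3)-\prod_{1\le i<j\le 3}\scrK_{t,\theta}^{(2)}(x_i,x_j)\Big)\dd x_1\,\dd x_2\,\dd x_3 > 0.
\end{equation*}

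To establish this, I would organise the Dyson series for $\scrK_{t,\theta}^{(3)}$ by the temporally ordered sequence of pairs that interact. The \emph{tree-like} subfamily, in which each of the three pairs $\{1,2\},\{1,3\},\{2,3\}$ interacts exactly once in one of the six possible orderings, is a three-body convolution of three two-body resolvents; applying the Gaussian Correlation Inequality to the log-concave heat-kernel propagators of the three Brownian particles should give a lower bound by the factorised GMC product $\prod_{i<j}\scrK_{t,\theta}^{(2)}(x_i,x_j)$. All remaining terms in the Dyson series (repeated pair collisions and more intricate triple-encounter patterns) are iterated convolutions of nonnegative kernels and hence contribute nonnegatively to $\scrK_{t,\theta}^{(3)}$.

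To upgrade $\ge$ to $>$, I would isolate one explicit extra contribution to the Dyson series --- for instance a term in which the pair $\{1,2\}$ interacts twice, sandwiching a single $\{1,3\}$ interaction --- whose integrand is strictly positive on an open subset of $(\R^2)^3$. Both admissible choices of $\varphi$ are strictly positive on open sets, so $\varphi^{\otimes 3}$ integrates this extra contribution to something strictly positive. The main obstacle is the tree-like reduction itself: rearranging the time-ordered three-body convolution of three two-body resolvents into the factorised form $\prod_{i<j}\scrK_{t,\theta}^{(2)}(x_i,x_j)$ is delicate because the continuum two-body resolvent is only available through an implicit Mellin--Barnes/Bessel representation. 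The Gaussian Correlation Inequality is the natural tool, as it produces exactly the bound on a joint multi-particle integral by its pairwise marginals that is needed; the key technical work is in choosing the relative coordinates so that the correct log-concave decomposition becomes visible.
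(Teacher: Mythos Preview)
Your proposal has a real gap in both the mechanism and the strictness step. The ``tree-like subfamily plus nonnegative remainder'' decomposition does not work: the GMC product $\prod_{i<j}\scrK_t^{(2)}(x_i,x_j)$ corresponds to three pair-interaction series with \emph{no} global time-ordering between distinct pairs, and this is not a sub-sum of the time-ordered SHF Dyson series --- so there is no natural ``remainder'' to declare nonnegative. It is true that the Gaussian Correlation Inequality, applied (as in Section~\ref{via-gci}) to the prelimit collision-local-time representation, yields pointwise $\scrK_t^{(3)}(\vec x)\ge\prod_{i<j}\scrK_t^{(2)}(x_i,x_j)$ on the \emph{entire} expression --- but this is only $\ge$. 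Your plan to upgrade to $>$ by isolating one explicit extra Dyson term then fails outright: GCI produces an inequality, not a decomposition, so there is no leftover term. Strictness requires a separate argument; the paper does supply one via GCI plus a conditional-variance computation (Lemma~\ref{L:lowerbd}), but that belongs to the proof of Theorem~\ref{th:mmom}, is specific to heat-kernel averaging $\varphi=g_\delta$, and would not handle the ball indicator.

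The paper's proof of Theorem~\ref{th:3mom} in Section~\ref{via-integral} is completely different and does not use GCI. It integrates out all spatial variables in the renewal-type kernel \eqref{eq:K3}--\eqref{eq:gm} exactly, via iterated two-dimensional Gaussian identities (Proposition~\ref{th:comutation-gm}), obtaining a strict lower bound $K_t^{(3)}(\varphi)>I_t^{(3)}(\varphi)$ that is a purely temporal multi-integral weighted by a spatial functional $\scrG_{a_1,a_2}(\varphi)$. On the GMC side, it rewrites the products $K_t^{(2)}K_t^{(2)}$ and $K_t^{(2)}K_t^{(2)}K_t^{(2)}$ appearing in \eqref{eq:GMC-3-2} as renewal series of the \emph{same} temporal shape but with a different spatial weight $\widetilde\scrG_{a_1,a_2}(\varphi)$ (Lemmas~\ref{th:prod2}--\ref{th:prod3}, Proposition~\ref{th:2ub}). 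The whole theorem then reduces to the elementary comparison $\scrG_{a_1,a_2}(\varphi)>\widetilde\scrG_{a_1,a_2}(\varphi)$, verified by direct Gaussian computation for the heat kernel and by Anderson's inequality for the ball indicator (Proposition~\ref{th:bound-comp}).
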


\begin{remark}
The bound \eqref{eq:lb3-SHF-GMC} actually holds for all radially symmetric and decreasing
functions $\varphi$ that satisfy a basic inequality, see \eqref{eq:goalGG} below.
These include, in particular, the indicator function of a ball and the heat kernel,
that we single out in Theorem~\ref{th:3mom}.
\end{remark}

We next turn to moments of any order $m \ge 3$.
Since $\scrM_t^\theta(\dd x)$
is a GMC with a log-divergent second moment kernel, see \eqref{eq:rhot}, one  can shown that
(see Proposition~\ref{P:momasy} below)
\begin{equation}\label{eq:factorization}
	\bbE\big[ \big(2\, \scrM_t^\theta(g_\delta) \big)^m \big] \sim
	\bbE\big[ \big(2\,\scrM_t^\theta(g_\delta)\big)^2 \big]^{\binom{m}{2}} \quad
	\text{as } \delta \downarrow 0 \,,
\end{equation}
where $g_\delta$ is the heat kernel on $\R^2$ at time $\delta$, the multiplicative factor~$2$ arises from \eqref{eq:first}, and the notation $\phi(\delta)\sim \psi(\delta)$ as $\delta\downarrow 0$ means $\lim_{\delta\downarrow 0} \phi(\delta)/\psi(\delta) =1$. We show that for the SHF $\scrZ_t^\theta$ this asymptotic factorisation \emph{does not hold}.

\begin{theorem}[Higher moments lower bound]\label{th:mmom}
Given any $t > 0$ and $\theta \in \R$, there exists $\eta = \eta_{t,\theta} > 0$
such that for any $h\in\N$ with $h \ge 3$ we have
\begin{equation}\label{eq:mombd0}
	\bbE\big[ \big(2\,\scrZ_t^\theta(g_\delta)\big)^h \big]
	\geq (1+\eta) \, \bbE\big[ \big(2\,\scrZ_t^\theta(g_\delta)\big)^2
	\big]^{h\choose 2}
	\qquad \forall \delta \in (0,1) \,.
\end{equation}
As a consequence, by \eqref{eq:factorization},
for any $h\in\N$ with $h \ge 3$ we have
\begin{equation} \label{eq:asystrict}
	\liminf_{\delta\downarrow 0}  \;
	\frac{\bbE\big[ \scrZ_t^\theta(g_\delta)^h \big]}{\bbE\big[ \scrM_t^\theta(g_\delta)^h \big]}
	\ge 1+\eta > 1 \,,
\end{equation}
hence $\mathscr{Z}_{t}^\theta(\dd x) \ne \mathscr{M}_{t}^\theta(\dd x)$.
\end{theorem}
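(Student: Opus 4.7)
The goal is a moment lower bound where the constant $\eta = \eta_{t,\theta} > 0$ is \emph{uniform in both the order $h \geq 3$ and the regularisation parameter $\delta \in (0,1)$}. The natural starting point is Theorem~\ref{th:3mom}, which already yields the desired inequality for $h=3$, at least asymptotically as $\delta \downarrow 0$ once combined with the GMC factorisation \eqref{eq:factorization}. The overall plan is therefore a bootstrap from the third-moment bound to all higher moments, preserving uniformity in $h$ and $\delta$.

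My first step would be to upgrade the base case $h=3$ to a uniform-in-$\delta$ gap. Applying Theorem~\ref{th:3mom} with $\varphi = g_\delta$, the ratio $\bbE[(2\scrZ_t^\theta(g_\delta))^3] / \bbE[(2\scrZ_t^\theta(g_\delta))^2]^{3}$ is a continuous function of $\delta$ on $(0,1)$ that, by the GMC factorisation, is asymptotically bounded below by $1+\eta_0$ as $\delta \downarrow 0$ for some $\eta_0 > 0$. A compactness/continuity argument on compact subsets of $(0,1)$ should then yield a uniform gap, possibly after reducing $\eta_0$.

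To pass from $h=3$ to general $h \geq 4$, I would exploit the explicit series expansion for $\bbE[\scrZ_t^\theta(g_\delta)^h]$ coming from the chaos expansion of the partition function (via \cite{GQT21, Che21, CSZ19b}), which writes this moment as a sum over ``interaction diagrams'' between the $h$ replicas. The purely pairwise diagrams reconstruct the GMC prediction $\bbE[\scrZ_t^\theta(g_\delta)^2]^{\binom{h}{2}}$. The remaining diagrams, in particular those containing a ``triple interaction'' among three chosen replicas, contribute the desired excess. The key step would be to isolate a sub-collection of configurations in which three replicas interact non-Gaussianly---carrying the third-moment excess of Theorem~\ref{th:3mom}---while the other $h-3$ replicas interact only pairwise and reproduce a factor $\bbE[\scrZ_t^\theta(g_\delta)^2]^{\binom{h}{2}-3}$. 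The Gaussian correlation inequality, already flagged among the paper's keywords, is a natural tool for producing such a multiplicative lower bound, since it allows one to decouple the ``triple'' from the remaining pairwise interactions at the cost of a controlled inequality.

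The main obstacle will be ensuring uniformity of $\eta$ in $h$: the constant must not deteriorate with the combinatorial proliferation of diagrams. This forces the triple-excess extraction to be done in a single stroke, by fixing \emph{one} distinguished triple and lower-bounding the rest, rather than by an iterative scheme that would likely cost a factor decaying in $h$. A secondary difficulty is the non-asymptotic control of $\delta$ in the base case: when $\delta$ is of order one, both the second and third moments are bounded and the comparison with the GMC factorisation can no longer rely on the logarithmic divergence of the two-point kernel \eqref{eq:rhot}, so the strict inequality of Theorem~\ref{th:3mom} would have to be used directly rather than through its asymptotic consequences.
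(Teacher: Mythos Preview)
Your proposal has the right instinct about the Gaussian correlation inequality (GCI) being the key tool, but it misses the crucial framework in which GCI can actually be applied, and it takes a detour through Theorem~\ref{th:3mom} that the paper avoids entirely.

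The paper does \emph{not} bootstrap from the third moment. Instead, it passes to the mollified Stochastic Heat Equation representation (Proposition~\ref{th:SHF-SHE}), where the $h$-th moment becomes
\[
\bE_{\vec x}\Bigg[\prod_{1\le i<j\le h} \rme^{\beta_\eps^2 \int_0^t J_\eps(B^{(i)}_s - B^{(j)}_s)\,\dd s}\Bigg],
\]
with the starting points $x_i$ averaged against $g_{2\delta}$. The essential observation is that the full vector of Brownian increments together with the Gaussian starting points is \emph{jointly Gaussian}. After Taylor-expanding each exponential and writing $J_\eps(x) = \int_0^\infty \ind_{A_\eps(y)}(x)\,\dd y$ with $A_\eps(y)$ a centred ball, the expectation becomes a positive mixture of probabilities that a Gaussian vector lies in intersections of symmetric convex sets --- precisely the setting of GCI. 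One application of GCI then separates out all pairs except $(1,2)$ and $(1,3)$, giving directly
\[
\bE\Bigg[\prod_{i<j} \rme^{\beta_\eps^2 L^{i,j}}\Bigg] \ge \bE\Big[\rme^{\beta_\eps^2 L^{1,2}}\rme^{\beta_\eps^2 L^{1,3}}\Big]\cdot \bE\Big[\rme^{\beta_\eps^2 L^{1,2}}\Big]^{\binom{h}{2}-2},
\]
which is automatically uniform in $h$. The factor $1+\eta$ then comes from Lemma~\ref{L:lowerbd}, which shows that $\bE[\rme^{\beta_\eps^2 L^{1,2}}\rme^{\beta_\eps^2 L^{1,3}}]/\bE[\rme^{\beta_\eps^2 L^{1,2}}]^2 \ge 1+\eta$ by a variance argument: conditioning on $W^{(1)}$ and using Jensen, this ratio is at least the second moment of a random variable of mean one that is shown to be non-constant, uniformly in $\delta$. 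Note this is a ``V'' of two pairs sharing vertex~$1$, \emph{not} the full triangle $(1,2)(2,3)(1,3)$ of the third moment.

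Your proposal has two concrete gaps. First, you invoke GCI on the diagram expansion, but GCI needs a Gaussian measure and symmetric convex sets; the limiting kernels $\scrK_t^{(h)}$ in \eqref{eq:Zmomh-kernel} carry no such structure, and you do not say what the Gaussian is. The paper's move to the Brownian representation is precisely what supplies it. Second, your plan to feed Theorem~\ref{th:3mom} in as the base case would require extracting a full third-moment factor from the $h$-th moment, which GCI does not give: GCI lower-bounds by products of \emph{marginals}, so keeping three replicas together yields $\bE[\prod_{i<j\le 3}\rme^{\beta_\eps^2 L^{i,j}}]$ times $\bE[\rme^{\beta_\eps^2 L}]^{\binom{h}{2}-3}$, and one would still need to compare this triangle expectation to a third power of the pair expectation uniformly in $\delta$ --- which is essentially Lemma~\ref{L:lowerbd} again, not Theorem~\ref{th:3mom}. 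The direct variance argument of Lemma~\ref{L:lowerbd} bypasses the diagrammatic analysis entirely.
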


\begin{remark}
For the directed polymer partition functions in the whole subcritical regime, a lower bound qualitatively
similar to \eqref{eq:mombd0}, \emph{but with $\eta = 0$},
is also valid and matches
the asymptotic behaviour of the upper bounds obtained
in \cite{LZ21, CZ21}.
\end{remark}

Theorem~\ref{th:3mom} will be proved in Section~\ref{via-integral}
by exploiting a series expansion for the moments \eqref{eq:Zmomh}-\eqref{eq:Zmomh-kernel},
which in the case of the third moment admits a renewal-type form \cite{CSZ19b},
see \eqref{eq:Zmom3}-\eqref{eq:gm}. This is quite involved and can be represented
as a series of complicated diagrams.
Through an explicit computation,
we are able to integrate out the spatial variables in these diagrams.
What remains is a multiple integral of time variables that have
monotonicity properties, which we exploit in order to obtain the lower bound~\eqref{eq:lb3-SHF-GMC}.

Theorem~\ref{th:mmom} will be proved in Section~\ref{via-gci}
via a very different approach, inspired by the work of Feng \cite{Feng}. A key role
here is played by
the Gaussian Correlation Inequality \cite{R14, LM17},
which saves us from analysing the complicated diagrammatic representation of the moments.
By means of probabilistic arguments, such as bounding the variance of suitable random variables,
we obtain the lower bound \eqref{eq:mombd0}, which then yields \eqref{eq:asystrict}.

\subsection{Background}
\label{sec:background}
We recall here some results that  led to the critical $2d$ SHF.

To regularise the $2d$ Stochastic Heat Equation \eqref{eq:SHE}, we used in Section \ref{S:SHF}
a discretisation of space and time, which led to the directed polymer partition functions.
Alternatively, one can \emph{mollify the white noise $\xi$ in space on scale $\epsilon > 0$} by defining
$\xi^\eps(t,x) := (\xi(t,\cdot) * j_\eps)(x)$,
where $j_\eps(x):=\epsilon^{-2} j(x/\epsilon)$
and $j(\cdot)$ is a smooth probability kernel,
say compactly supported.
This leads to the mollified Stochastic Heat Equation:
\begin{equation}\label{eq:SHE0}
	\partial_t u^\eps(t,x)
	= \frac{1}{2} \Delta u^\eps(t,x) + \beta \, u^\eps(t,x) \, \xi^\eps(t,x) \,.
\end{equation}
The solution admits a Feynman-Kac representation \cite{BC95,BC98}:
\begin{equation}\label{eq:FK1}
	u^\eps(t,x)  = \bE_x\Big[\rme^{\beta \int_0^t \xi^\eps(t-s, B_s) \, \dd s
	\,-\, \frac{1}{2}\beta^2 \Vert j_\eps\Vert_2^2 t}  \Big]
	\stackrel{\rm dist}{=} \bE_x\Big[\rme^{\beta \int_0^t \xi^\eps(s, B_s) \, \dd s
	\,-\, \frac{1}{2}\beta^2 \Vert j_\eps\Vert_2^2 t}  \Big] \,,
\end{equation}
where $\bE_x$ denotes expectation for a standard Brownian motion $B$ starting at $x$
(for simplicity, we consider a flat initial condition $u^\epsilon(0,x) \equiv 1$).
The goal is then to make sense of the limit of $u^\eps(\cdot, \cdot)$ as $\epsilon \to 0$,
for suitable rescaling of $\beta = \beta_\epsilon$.

\begin{remark}\label{rem:dp-she}
Comparing \eqref{eq:FK1} with \eqref{eq:paf},
we can see $u^\eps(t,x)$
as the partition function of a Brownian directed polymer in the random environment $\xi^\eps$.
Thus the two schemes of regularisation, discretisation and mollification, are conceptually
(if not technically) analogous, with the correspondence
$\epsilon \leftrightsquigarrow 1 / \sqrt{N}$
(see Appendix~\ref{sec:comparison} for more details).
Most existing results apply to both schemes
\cite{CSZ17b, CSZ19b, CSZ20}, so we will focus on the mollified Stochastic Heat Equation
in what follows.
\end{remark}

Denote by $u_\epsilon^{(\hat \beta)}(t,x)$ the solution \eqref{eq:FK1} with
$\beta =\hat \beta \, \sqrt{4\pi} / \sqrt{\log \epsilon^{-2}}$ for $\hat\beta > 0$.
A phase transition on this scale with critical point $\hat\beta_c = 1$
was first identified in \cite{CSZ17b}, where it was shown that
for any fixed $(t,x)$, the following limit in distribution holds:
\begin{align}\label{onepoint}
u_\epsilon^{(\hat\beta)}(t,x)\xrightarrow[\epsilon \to 0]{d}
\begin{cases}
	\rme^{\sigma(\hat\beta) \, \sfX \,-\, \tfrac{1}{2}\sigma(\hat\beta)^2}
	& \text{ if} \ \hat\beta < 1 \,, \\
	\rule{0pt}{1.3em}0 & \text{ if} \ \hat\beta \geq 1 \,,
\end{cases}
\end{align}
where $\sfX$ is a standard normal random variable and $\sigma(\hat\beta)^2
:= \log(1 / (1-\hat\beta^2))$.

For $\hat \beta<1$, known as the \emph{subcritical regime},
the solution $u_\epsilon^{(\hat \beta)}$
viewed as a random field,
suitably centred and normalised,
was shown in \cite{CSZ17b} to converge in distribution
to a Gaussian free field,
given by the solution $v^{(\hat\beta)}$ of the \emph{additive}
stochastic heat equation (a.k.a.\ Edwards-Wilkinson equation):
\begin{align}\label{eq:EW}
	\partial_t v^{(\hat\beta)}(t,x)
	&= \frac{1}{2} \Delta v^{(\hat\beta)}(t,x) + \sqrt{\tfrac{1}{1-\hat\beta^2}} \, \xi(t,x)
	\qquad \text{with} \quad v^{(\hat\beta)}(0,x)= 0,
\end{align}
where the noise coefficient diverges as $\hat\beta\uparrow 1$.
More precisely, if we define
\begin{align}\label{scaled-field}
\mathfrak{u}^{(\hat\beta)}_\epsilon(t,x):=\tfrac{\sqrt{\log \epsilon^{-2}}}{\sqrt{4\pi} \hat\beta}
\big( u^{(\hat\beta)}_\epsilon(t,x)-1\big),
\end{align}
then for every test function $\phi \in C_c(\R^2)$
we have $\langle \mathfrak{u}^{(\hat\beta)}_\epsilon, \phi \rangle
\xrightarrow[]{d} \langle v^{(\hat\beta)}, \phi \rangle$ as $\epsilon\to 0$.

A similar result has been established for the solution of
the mollified $2d$ KPZ equation, with
$u^{(\hat\beta)}_\epsilon(t,x)-1$ in \eqref{scaled-field}
replaced by $\log u^{(\hat\beta)}_\epsilon(t,x)
- \bbE[\log u^{(\hat\beta)}_\epsilon(t,x)]$,
see \cite{CSZ20, Gu20}. This may be viewed as an indication that,
in the subcritical regime $\hat\beta < 1$,
the solution of the mollified $2d$ Stochastic Heat Equation
is close to the exponential of a Gaussian field
(as we already discussed before Theorem~\ref{thm:main}
in the directed polymer setting). This breaks down
at criticality, as we show in Theorem~\ref{thm:main}.

\smallskip

We next review the results when $\beta = \beta_\epsilon$ is scaled in
a critical window around the critical point $\hat\beta_c = 1$,
which for the mollified Stochastic Heat Equation reads as follows:
\begin{align}\label{beta-eps}
	\beta^2_\epsilon = \frac{4\pi}{\log \epsilon^{-2}}
	\bigg( 1 + \frac{\tilde \rho +o(1)}{\log\epsilon^{-2}} \bigg)
	= \frac{2\pi}{\log \epsilon^{-1}}
	\bigg( 1 + \frac{\tilde \rho +o(1)}{\log\epsilon^{-2}} \bigg) \,.
\end{align}
Note that this is similar to \eqref{eq:betasimple} with $N = \epsilon^{-2}$
(the different factor $4\pi$ vs.\ $\pi$ is because the simple symmetric
random walk on $\Z^2$ has period $2$ and covariance matrix $\frac{1}{2}I$:
see Subsection~\ref{sec:SHF-SHE} and Appendix~\ref{sec:crit-wind}
for a more detailed comparison).

The study of the mollified Stochastic Heat Equation with $\beta = \beta_\eps$
chosen in the critical window \eqref{beta-eps} was initiated
in \cite{BC98}, where they identified the limit of the second moment of
the solution $u^\eps(t, \cdot)$, see \eqref{eq:FK1}.
Subsequently, \cite{CSZ19b} computed the limit of the third moment of $u^\eps(t, \cdot)$
and \cite{GQT21} identified the limit of all higher moments (see also the more recent work \cite{Che21}).
These results ensure that the mollified solutions $(u^\eps(t, \cdot))_{\epsilon > 0}$
are \emph{tight} as random measures on $\R^2$, hence they admit subsequential limits in distribution
as $\epsilon \downarrow 0$, and any such limit has the same moments as identified in
\cite{BC98,CSZ19b,GQT21,Che21}.
However, these moments grow too fast to uniquely determine the
limiting random measure.

Existence of a \emph{unique} limit, which was named the {\em critical $2d$ Stochastic Heat Flow},
was finally established in \cite{CSZ23}
in the directed polymer setting, i.e.\ for the solution of the Stochastic Heat Equation
regularised via discretisation.
It is expected that the same holds for the regularisation via mollification,
i.e.\ that $u^\eps(t, \cdot)$ in \eqref{eq:FK1} converges to
the critical $2d$ Stochastic Heat Flow as $\eps\downarrow 0$,
although the proof of \cite{CSZ23}
needs to be adapted.

\subsection{Future perspectives}
\label{sec:future}

We now discuss some related works and open questions.

We proved in Theorem~\ref{thm:main} that the
(one-time marginal of the) SHF,
as a random measure on $\R^2$, is not a GMC.
There is, however, a very different sense in which a GMC
structure emerges naturally.
In the Feynman-Kac formula \eqref{eq:FK1} for the solution $u^\eps(t, x)$
of the mollified Stochastic Heat Equation, the
exponent $\int_0^t \xi^\eps(s, B_s){\rm d}s$ may be viewed as
a Gaussian process (w.r.t.\ the randomness of the white noise $\xi^\eps$)
indexed by $(B_s)_{s\in [0,t]} \in C[0,t]$, the space of continuous functions defined on $[0,t]$.
As a consequence, on the path space $C[0,t]$,
we can consider the GMC measure $\cM^\eps_x(\dd B)$
defined by
\begin{equation}\label{eq:GMC-path}
	\cM^\eps_x(\dd B) :=
	\rme^{\beta \int_0^t \xi^\eps(s, B_s){\rm d}s  - \frac{1}{2}\beta^2 \Vert j_\eps\Vert_2^2 t}
	\, \cW_x(\dd B) \,,
\end{equation}
where $\cW_x(\cdot)$ denotes the Wiener measure on paths $B \in C[0,t]$ with $B_0 = x$.
Note that $u^\eps(t, x) = \cM^\eps_x(C[0,t])$ in \eqref{eq:FK1} is the total mass
of $\cM^\eps_x(\cdot)$.

This was the perspective taken in \cite{Cla19a, Cla19b},
where an analogue of the critical $2d$ directed polymer
on the diamond hierarchical lattice was studied
(see also \cite{BM20} for the Euclidean setting).
In \cite{Cla19a, Cla19b}, partition functions were shown to
have a non-trivial limit and then used to construct \emph{a family of critical continuum
polymer measures} indexed by
the analogue of $\tilde \rho$ in \eqref{beta-eps}.
Interestingly, these continuum polymer measures are
related to each other through a conditional
GMC structure, even though they cannot be defined as a GMC w.r.t.\
the analogue of the Wiener measure on the continuum hierarchical lattice.

This raises the natural question whether similar results hold for the analogue of
the critical $2d$ SHF in path space, namely, whether the measures $\cM_x^\eps$ on
$C[0,t]$ converge as $\eps\to 0$, at least when integrated over~$x$,
and whether the limits corresponding
to different $\tilde \rho$ in \eqref{beta-eps} are related to each other through a
conditional GMC structure. There is ongoing work in this direction in \cite{CM22},
where the authors study the second moment measure of subsequential limits of
$\cM_x^\eps\, {\rm d}x$ and found properties that are consistent with the conditional GMC structure.
\smallskip

Another interesting direction of research concerns
the asymptotic behavior of the moments of the critical $2d$ SHF.
Theorems~\ref{th:3mom} and~\ref{th:mmom} provide lower bounds and
it is natural to ask whether these can be improved.
The works \cite{CSZ19b, GQT21, Che21}
show that for each integer $h\geq 3$, there is a well-defined $h$-point
kernel $\scrK^{(h)}: (\R^2)^h \to \R \cup \{+\infty\}$ such that for any $\varphi\in C_c(\R^2)$,
$$
	\bbE[\scrZ_t^\theta(\varphi)^h] = \frac{1}{2^h}\,
	\idotsint_{(\R^2)^h}
	\bigg( \prod_{i=1}^h \, \varphi(x_i) \bigg) \, \scrK_t^{(h)}(x_1, \cdots, x_h) \, \dd\vec x \,,
$$
see Theorem~\ref{th:moments} below.
In light of Theorem~\ref{th:mmom} and \eqref{eq:rhot}, it is natural to conjecture that
\begin{equation}\label{eq:conj-mom}
	\scrK^{(h)}_t(x_1, \cdots, x_h) \sim \scrC_{t,\theta;h}
	\prod_{1\leq i<j\leq h} \log \frac{1}{|x_i-x_j|}
	\qquad \text{as} \quad \max_{1\leq i\leq j} |x_i-x_j| \to 0 \,,
\end{equation}
for some constant $\scrC_{t,\theta;h}> (C_{t,\theta})^{h\choose 2}$, where
$C_{t,\theta}$ is the constant which determines the asymptotic behavior of the
second moment kernel, see \eqref{eq:second}-\eqref{eq:rhot}.

\subsection{Organization of the paper}
The rest of the paper is structured as follows.

\begin{itemize}
\item In Section~\ref{SHF-recap}, we recall the moments formulas for the critical $2d$ SHF.

\item In Section~\ref{GMC-recap} we review the construction of GMC and recall its moments.

\item In Sections~\ref{via-integral} and~\ref{via-gci}
we prove our main results Theorems~\ref{th:3mom} and~\ref{th:mmom}.

\item In Appendix~\ref{sec:crit-wind} we compare the critical windows
\eqref{eq:betasimple} and \eqref{beta-eps}.
\end{itemize}

\section{Moments of the critical $2d$ SHF}
\label{SHF-recap}

In this section, we recall the moments formulas for the critical $2d$ Stochastic Heat Flow
from \cite{BC98, CSZ19a, CSZ19b, GQT21}.
We denote by $g_t(x)$ the heat kernel on $\R^2$:
\begin{equation}\label{eq:gt}
	g_t(x) := \frac{1}{2\pi t}  \rme^{-\frac{|x|^2}{2t}} \,.
\end{equation}
An important role is played by the following special function,
defined for any $\theta\in\R$:
\begin{equation} \label{g-theta}
	G_\theta(t)
	= \int_0^\infty \frac{\rme^{(\theta-\gamma) u} \, u \, t^{u-1}}{\Gamma(u+1)} \, \dd u \,,
\end{equation}
where $\gamma = -\int_0^\infty \log u \, \rme^{-u} \, \dd u \simeq 0.577$ is
the Euler-Mascheroni constant.

\begin{remark}
The function $G_\theta$ has a probabilistic interpretation.
Denote by $Y = (Y_u)_{u\ge 0}$ the \emph{Dickman subordinator},
defined as the pure jump process with L\'evy measure
$\ind_{(0,1)}(x) \, x^{-1} \, \dd x$, see \cite{CSZ19a}.
Then $G_\theta$ is  the \emph{exponentially weighted renewal density} of $Y$:
\begin{equation*}
	G_\theta(t) = \int_0^\infty \rme^{\theta u}\, \frac{\P(Y_u \in \dd t)}{\dd t} \, \dd u
	\qquad \text{for } t \in [0,1] \,.
\end{equation*}
\end{remark}

\subsection{First and second moments}

The first moment of the SHF is
\begin{equation}
	\bbE[\mathscr{Z}^\theta_{s,t}(\dd x, \dd y)]
	= \tfrac{1}{2} \, g_{\frac{1}{2}(t-s)}(y-x) \, \dd x \, \dd y \,,
\end{equation}
while its covariance is given by
\begin{equation} \label{eq:formula-cov}
\begin{aligned}
	\bbcov[\mathscr{Z}^\theta_{s,t}(\dd x, \dd y), \mathscr{Z}^\theta_{s,t}(\dd x', \dd y')]
	&= \tfrac{1}{2} \, K_{t-s}^\theta(x,x'; y, y') \, \dd x \, \dd y \, \dd x' \, \dd y' \,,
\end{aligned}
\end{equation}
where
\begin{equation}
\label{eq:m2-lim}
\begin{split}
	K_{t}^{\theta}(x,x'; y,y')
	&\,:=\, \pi \: g_{\frac{t}{4}}\big(\tfrac{y+y'}{2} - \tfrac{x+x'}{2}\big)
	\!\!\! \iint\limits_{0<s<u<t} \!\!\! g_s(x'-x) \,
	G_\theta(u-s) \, g_{t-u}(y'-y) \, \dd s \, \dd u \,.
\end{split}
\end{equation}
These formulas were derived from the asymptotic results in \cite{CSZ19a}
connected to the Dickman subordinator,
see \cite[Proposition~3.5]{CSZ23}.

We will focus on the one-time marginal $\scrZ_t(\dd x)$ of the SHF,
see \eqref{eq:SHF1}, which we also call
{\it the SHF with flat initial data}.
The first moment of the averaged field is then
\begin{equation} \label{eq:Zmom1}
	\bbE[\scrZ_{t}^\theta(\varphi)] =
	\frac{1}{2} \int\limits_{\R^2} \varphi(z) \,  \dd z \,,
\end{equation}
while its centered second moment can be derived from \eqref{eq:formula-cov}-\eqref{eq:m2-lim} and equals
\begin{equation} \label{eq:Zmom2}
\begin{split}
	\bbE\big[ \big( \scrZ_{t}^\theta(\varphi) - \bbE[\scrZ_{t}^\theta(\varphi)] \big)^2 \big]
	&= \frac{1}{4} \, \int\limits_{(\R^2)^2} \varphi(z_1) \, \varphi(z_2) \,
	K_t^{(2)}(z_1, z_2) \, \dd z_1 \, \dd z_2 \,, \\
	\text{with} \qquad K_t^{(2)}(z_1, z_2)
	&:= 	2\pi \, \iint\limits_{0 < s < u < t} g_{s}(z_1-z_2) \,  G_\theta(u-s) \,
	\dd s\, \dd u \,,
\end{split}
\end{equation}
a formula that was first derived in \cite{BC98}
in the context of the mollified Stochastic Heat Equation
(see Subsection~\ref{sec:SHF-SHE} below).

\subsection{Third moment}

The centered third moment of the SHF can be written as follows:
\begin{equation} \label{eq:Zmom3}
\begin{split}
	&\bbE\big[ \big( \scrZ^\theta_{t}(\varphi) - \bbE[\scrZ^\theta_{t}(\varphi)] \big)^3 \big]
	= \frac{1}{8} \, \int\limits_{(\R^2)^3} \varphi(z_1) \, \varphi(z_2) \, \varphi(z_3)\,
	K_t^{(3)}(z_1, z_2, z_3) \, \dd z_1 \, \dd z_2 \, \dd z_3 \,,
\end{split}
\end{equation}
where the kernel $K_t^{(3)}(z_1, z_2, z_3)$, first obtained in \cite[Theorem~1.4]{CSZ19b},
admits the following explicit but quite involved expression
(see Figure~\ref{fig:3rd} for a pictorial representation):\footnote{We remark that in \cite[eq. (1.25)]{CSZ19b}
we have $\pi^m$, whereas in \eqref{eq:K3} we have $(2\pi)^m$.
The main source of this discrepancy is a missing factor $2^{m-2}$
in \cite[eq. (1.25)]{CSZ19b}: indeed,
a factor $2 \ind_{\{(n,x) \in \Z^3\}}$ due to periodicity  was omitted
in \cite[eq. (5.40)]{CSZ19b}, which plugged in \cite[eq. (5.30)]{CSZ19b} yields
a factor $2$ for each $i=3,\ldots, m$, hence the claimed factor $2^{m-2}$ in \cite[eq. (1.25)]{CSZ19b}.
Since the third moment in \eqref{eq:Zmom3}
is \emph{half} the one in \cite[Theorem~1.4]{CSZ19b}, see
Remark~\ref{rem:normalization}, we have a global factor
$\frac{1}{2} \cdot 2^{m-2} = \frac{1}{8} 2^{m}$:
this turns $\pi^m$  from \cite[eq. (1.25)]{CSZ19b} into $(2\pi)^m$ in \eqref{eq:K3} and
accounts for the extra factor $\frac{1}{8}$ in \eqref{eq:Zmom3}.}
\begin{equation}\label{eq:K3}
	K_t^{(3)}(z_1, z_2, z_3) :=
	\sum_{m=2}^\infty 2^{m-1} \, (2\pi)^m \,
	\big\{ \cI_t^{(m)}(z_1, z_2, z_3) + \cI_t^{(m)}(z_2, z_3, z_1) + \cI_t^{(m)}(z_3, z_1, z_2) \big\} \,,
\end{equation}
where the kernel $\cI_t^{(m)}(z_1, z_2, z_3)$ is defined by
\begin{equation} \label{eq:Im}
\begin{split}
	\cI_t^{(m)}(z_1, z_2, z_3)
	:= \!\!\!\!\!\!\!\!
	\idotsint\limits_{0 < a_1 < b_1 < \ldots < a_m < b_m < t} \!\!\!\!
	& \textbf{g}^{(m)}_{a_1,b_1,\ldots,a_m,b_m}(z_1, z_2, z_3) \,
	\bigg\{ \prod_{\ell=1}^m G_\theta(b_\ell-a_\ell) \bigg\} \, \dd \vec{a} \, \dd \vec{b} \,,
\end{split}
\end{equation}
and $\textbf{g}_{a_1,b_1,\ldots,a_m,b_m}(z_1, z_2, z_3)$ denotes the following
convolution of heat kernels:
\begin{equation} \label{eq:gm}
\begin{split}
	\textbf{g}^{(m)}_{a_1,b_1,\ldots,a_m,b_m}(z_1, z_2, z_3)
	\, := \!\!\!\!\!\!\!\!\!\iint\limits_{(\R^2)^{m} \times (\R^2)^{m}} \!\!\!\!\!\!\!\!
	\dd \vec x \, \dd \vec y \ \,
	g_{\frac{a_1}{2}}(x_1-z_1) \, g_{\frac{a_1}{2}}(x_1-z_2) \cdot
	g_{\frac{b_1-a_1}{4}}(y_1-x_1) & \\
	\cdot \,  g_{\frac{a_2}{2}}(x_2-z_3)
	\, g_{\frac{a_2-b_1}{2}}(x_2-y_1) \cdot g_{\frac{b_2-a_2}{4}}(y_2-x_2) & \\
	\cdot \prod_{\ell=3}^m
	\Big\{ g_{\frac{a_\ell-b_{\ell-2}}{2}}(x_\ell-y_{\ell-2}) \, g_{\frac{a_\ell-b_{\ell-1}}{2}}(x_\ell-y_{\ell-1})
	\cdot g_{\frac{b_\ell-a_\ell}{4}}(y_\ell - x_\ell) \Big\} &
\end{split}
\end{equation}
(we agree that $\prod_{\ell=3}^m\{\ldots\} := 1$ for $m=2$).
We refer again to Figure~\ref{fig:3rd}.

\smallskip

We stress that formulas \eqref{eq:Zmom3}-\eqref{eq:gm}
are the key to our proof of Theorem~\ref{th:3mom}.

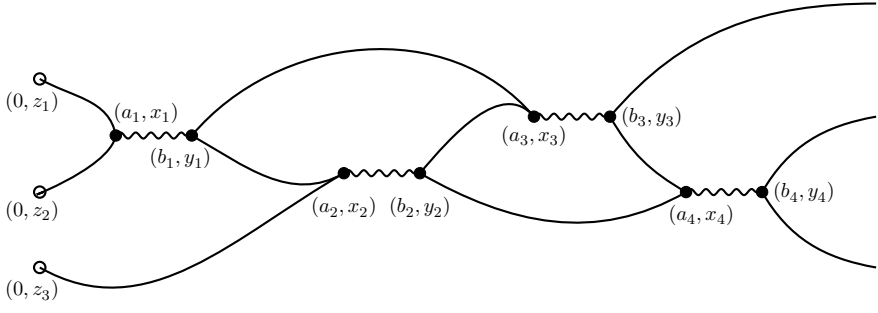
\begin{figure}
\begin{tikzpicture}[scale=0.5]
\draw  [fill] (0, -0.5)  circle [radius=0.15]; \node at (0,-1.4) {\scalebox{0.7}{$(a_2,x_2)$}};
\draw [-,thick, decorate, decoration={snake,amplitude=.4mm,segment length=2mm}] (0,-0.5) -- (2,-0.5);
\draw  [fill] (2, -0.5)  circle [radius=0.15]; \node at (2,-1.4) {\scalebox{0.7}{$(b_2,y_2)$}};
\draw[thick] (2,-0.5) to [out=50,in=130] (5,1);
\draw  [fill] (5, 1)  circle [radius=0.15]; \node at (5,0.5) {\scalebox{0.7}{$(a_3,x_3)$}};
 \draw [-,thick, decorate, decoration={snake,amplitude=.4mm,segment length=2mm}] (5,1) -- (7,1);
\draw  [fill] (7, 1)  circle [radius=0.15]; \node at (8.1,1.0) {\scalebox{0.7}{$(b_3,y_3)$}};
\draw[thick] (-4,0.5) to [out=50,in=130] (5,1);
\draw  [fill] (-4,0.5)  circle [radius=0.15]; \node at (-4.3,-0.1) {\scalebox{0.7}{$(b_1,y_1)$}};
 \draw [-,thick, decorate, decoration={snake,amplitude=.4mm,segment length=2mm}] (-6,0.5) -- (-4,0.5);
 \draw[thick] (-7.9,-1) to [out=200,in=260] (-6,0.5);
  \draw  [thick] (-8,-1)  circle [radius=0.15];  \node at (-8.2,1.4) {\scalebox{0.7}{$(0,z_1)$}};
   \draw[thick] (-8,2) to [out=-30,in=100] (-6,0.5);
   \draw  [thick] (-8,2)  circle [radius=0.15];  \node at (-8.2,-1.5) {\scalebox{0.7}{$(0,z_2)$}};
 \draw  [fill] (-6,0.5)  circle [radius=0.15]; \node at (-5.2,1.1) {\scalebox{0.7}{$(a_1,x_1)$}};
 \draw[thick] (-4,0.5) to [out=-30,in=210] (0,-0.5);
  \draw[thick] (-8,-3) to [out=-30,in=210] (0,-0.5); \node at (-8.2,-3.6) {\scalebox{0.7}{$(0,z_3)$}};
   \draw  [thick] (-8,-3)  circle [radius=0.15];
  \draw[thick] (2,-0.5) to [out=-30,in=210] (9,-1);
   \draw  [fill] (9,-1)  circle [radius=0.15];  \node at (9.4,-1.6) {\scalebox{0.7}{$(a_4,x_4)$}};
    \draw[thick] (7,1) to [out=-60,in=150] (9,-1);
    \draw [-,thick, decorate, decoration={snake,amplitude=.4mm,segment length=2mm}] (9,-1) -- (11,-1);
     \draw  [fill] (11,-1)  circle [radius=0.15];  \node at (12.1,-1.0) {\scalebox{0.7}{$(b_4,y_4)$}};
      \draw[thick] (11,-1) to [out=60,in=190] (14,1);
       \draw[thick] (11,-1) to [out=-60,in=-190] (14,-3);
       \draw[thick] (7,1) to [out=50,in=180] (14,4);
\end{tikzpicture}
\caption{Graphical representation of the kernel $K_t^{(3)}(z_1, z_2, z_3)$
for the centered third moment, see \eqref{eq:K3}-\eqref{eq:gm}.
Solid-curved lines from $(b,y)$ to $(a,x)$
are assigned weights $g_{\frac{a-b}{2}}(x-y)$ while wiggle lines
from $(a,x)$ to $(b,y)$ are assigned weights
$G_\theta(b-a) g_{\frac{b-a}{4}}(y-x)$.}
\label{fig:3rd}
\end{figure}

\begin{remark}\label{rem:normalization}
The normalisation chosen in \cite{CSZ23} to construct the critical $2d$ SHF
is slightly different from the one in \cite{CSZ19b}
due to the restriction to even parity sites, see \eqref{eq:rescZmeas}-\eqref{eq:even}.
As a consequence, the limiting field in \cite{CSZ19b} corresponds to
$\scrZ^{\theta,\mathrm{mix}}_{t}(\varphi) \overset{d}{=}
\scrZ^\theta_{t}(\varphi) + \scrZ^{\prime, \theta}_{t}(\varphi)$,
where $\scrZ^\theta_{t}(\varphi)$ and $\scrZ^{\prime, \theta}_{t}(\varphi)$
denote two independent copies of the SHF.
It follows that
\begin{equation*}
	\bbE[ ( \scrZ^{\theta}_{t}(\varphi) - \bbE[\scrZ^{\theta}_{t}(\varphi)] )^3 ]
	= \frac{1}{2} \, \bbE[ (\scrZ^{\theta, \mathrm{mix}}_{t}(\varphi)
	- \bbE[\scrZ^{\theta, \mathrm{mix}}_{t}(\varphi)] )^3 ]  \,,
\end{equation*}
that is, the third moment
in \eqref{eq:Zmom3}
is \emph{half} of that computed in \cite[Theorem~1.4]{CSZ19b}.
\end{remark}

\subsection{Higher moments}

A formula for higher moments of the SHF was first identified in \cite{GQT21}.
For completeness, we recall this formula in our framework.

Fix an integer $h\in\N$ with $h \ge 2$.
For $t > 0$ and a pair $\{i,j\} \subset \{1,\ldots, h\}$
of \emph{distinct} elements $i < j$,
we define two measure kernels mapping from $(\R^2)^h$ to measures supported on the subspace
\begin{equation} \label{eq:R2hij}
	(\R^2)^h_{\{i,j\}} \, := \, \big\{ \bx = (x_1, \ldots, x_h) \in (\R^2)^h: \
	x_i = x_j \big\} \,.
\end{equation}
\begin{itemize}
\item The first measure kernel (actually a probability kernel) is called {\it constrained evolution}:
\begin{align}\label{eq:sfQ}
	\sfQ_{t}^{\{i,j\}}(\by, \dd \bx)
	\,:=\, \Bigg( \prod_{\ell=1}^h \,
	g_{\frac{t}{2}} (x_\ell - y_\ell)  \Bigg) \cdot
	\Bigg( \prod_{\ell \in \{1,\ldots, h\} \setminus \{i,j\}} \dd x_\ell \Bigg)
	\cdot \dd x_i \cdot \delta_{x_i}(\dd x_{j}) \,,
\end{align}
where $\delta_{x_i}(\cdot)$ denotes the Dirac mass at $x_i \in \R^2$
and $g_t(\cdot)$ is the heat kernel, see \eqref{eq:gt}.

\item The second measure kernel is called {\it replica evolution}:
\begin{align} \label{eq:sfG}
	\sfG_{\theta,t}^{\{i,j\}}(\bx, \dd \by)
	\,:=\, \Bigg( \prod_{\ell \in \{1,\ldots, h\} \setminus \{i,j\}}
	\!\!\!\!\!\!\!\! g_{\frac{t}{2}} (y_\ell - x_\ell) \, \dd y_\ell \Bigg)
	G_\theta(t) \, g_{\frac{t}{4}} (y_i - x_i) \, \dd y_i
	\cdot \delta_{y_i}(\dd y_j) \,,
\end{align}
where $G_\theta(t)$ is the function in \eqref{g-theta}.
We will only need $\sfG_{\theta,t}^{\{i,j\}}(\bx, \dd \by)$
with $x_i=x_j$.

\end{itemize}

We now give the higher moments formula.

\begin{theorem}\label{th:moments}
Fix $h\in\N$ with $h \ge 2$.
 The $h$-th moment of the SHF $\scrZ_{t}^\theta$ with flat initial data,
averaged over a test function $\varphi\in C_c(\R^2)$, admits the expression
\begin{align} \label{eq:Zmomh}
	\bbE\big[ \scrZ^\theta_{t}(\varphi)^h \big]
	&= \frac{1}{2^h}
	\int\limits_{(\R^2)^h} \varphi(z_1) \cdots \varphi(z_h) \,
	\scrK_t^{(h)}(z_1,\ldots,z_h) \, \dd z_1 \cdots \dd z_h \,,
\end{align}
with
\begin{equation} \label{eq:Zmomh-kernel}
\begin{split}
       &  \scrK_t^{(h)}(z_1,\ldots, z_h) \\
       & \quad \ := \, 1 +
       \sum_{m=1}^\infty
        (2\pi)^m
       \!\!\!\!\!
        \sumtwo{\{i_1\ne j_1\},\ldots,\{i_m \ne j_m\} \subset \{1,\ldots,h\}}
        {\rule{0pt}{.8em}\text{with }
        \{i_\ell, j_\ell\} \ne \{i_{\ell-1}, j_{\ell-1}\} \, \forall \ell \ge 2}
       \ \ \, \idotsint\limits_{0 < a_1 < b_1 < \ldots < a_m < b_m < t} \!\!\!\!\!\!\!\!\!
        \dd \vec{a} \, \dd \vec{b} \
        \idotsint\limits_{(\vec{\bx}, \, \vec{\by}) \in ((\R^2)^{h})^{2m}}   \\
        & \qquad \quad \ \
	\sfQ_{a_1}^{\{i_1,j_1\}}(\bz, \dd \bx_1) \, \sfG_{\theta, b_1-a_1}^{\{i_1,j_1\}} (\bx_1, \dd \by_1)
	\prod_{\ell=2}^{m} \sfQ_{a_\ell-b_{\ell-1}}^{\{i_\ell,j_\ell\}}(\by_{\ell-1}, \dd \bx_\ell)
	\, \sfG_{\theta, b_\ell-a_\ell}^{\{i_\ell,j_\ell\}}(\bx_\ell, \dd \by_\ell) \, .
\end{split}
\end{equation}
\end{theorem}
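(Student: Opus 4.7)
The plan is to derive \eqref{eq:Zmomh}-\eqref{eq:Zmomh-kernel} by passing to the limit in an analogous formula for the discrete partition function field $\scrZ_N^{\beta_N}(\varphi)$, which converges in distribution to $\scrZ_t^\theta(\varphi)$ by Theorem~\ref{thm:2dSHF}. The first step is to upgrade this to convergence of $h$-th moments: this follows from uniform $L^p$ bounds on $\scrZ_N^{\beta_N}(\varphi)$ for any $p<\infty$, which are provided by the detailed moment analysis in \cite{GQT21,Che21}. Given such uniform integrability, it suffices to identify $\lim_N \bbE[\scrZ_N^{\beta_N}(\varphi)^h]$ with the right-hand side of \eqref{eq:Zmomh}.

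Next I would expand $Z_{0,N}^{\beta_N}(x,y)$ as a polynomial chaos in the mean-zero variables $\zeta_{n,z} := e^{\beta_N \omega(n,z) - \lambda(\beta_N)} - 1$, indexed by finite subsets of $\N \times \Z^2_{\rm even}$, with coefficients that are products of simple random walk transition probabilities. Substituting into $\bbE[\scrZ_N^{\beta_N}(\varphi)^h]$ and taking the disorder expectation produces a sum over configurations of space-time sites each visited by at least two of the $h$ replica walks. The core step --- already performed in \cite{GQT21,Che21} --- is to show that on the critical window only configurations where each site is visited by \emph{exactly} two replicas survive in the limit; triple or higher intersections are suppressed by extra powers of $(\log N)^{-1}$ not compensated by the combinatorics.

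Once reduced to pairwise intersection patterns, I would organise the surviving configurations by an increasing sequence of meeting sites with labels $\{i_\ell,j_\ell\}$ indicating which pair of replicas coincides, then group consecutive meetings of the same pair into a single intersection block $(a_\ell,b_\ell)$: this grouping is exactly what enforces the constraint $\{i_\ell,j_\ell\}\ne\{i_{\ell-1},j_{\ell-1}\}$ in \eqref{eq:Zmomh-kernel}. The free portions of the walks between consecutive blocks produce, via the local CLT, the heat-kernel factors $g_{(a_\ell-b_{\ell-1})/2}$ appearing in the constrained evolution kernel $\sfQ$ of \eqref{eq:sfQ}. Inside each block, the sum of convolution powers of the two-dimensional SRW Green's function at the origin,
\[
\sum_{k\ge 1} \sigma_N^{2k} \sum_{0<n_1<\cdots<n_k} p_{2n_1}(0)\,\cdots\, p_{2(n_k-n_{k-1})}(0),
\]
converges, by the Dickman subordinator asymptotics of \cite{CSZ19a}, to $G_\theta(b_\ell-a_\ell)$; coupled with the diffusive spreading of the centre of mass of the meeting pair (producing the factor $g_{(b_\ell-a_\ell)/4}$) this yields the replica evolution kernel $\sfG$ of \eqref{eq:sfG}, together with the normalisation $(2\pi)^m$. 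A dominated-convergence argument in $m$, justified by the same uniform moment bounds used to establish moment convergence, allows one to pass the limit inside the infinite sum and concludes the identification.

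The main obstacle is the combinatorial-analytic suppression of triple-intersection events and the justification that the sum over consecutive same-pair meetings factorises cleanly into the Dickman renewal kernel $G_\theta$; this is the technical heart of \cite{GQT21,Che21}. Our role is essentially to re-index their result in the measure-valued language of the SHF, using the correspondence between the parameter $\tilde\rho$ of \eqref{beta-eps} and the parameter $\theta$ of Theorem~\ref{thm:2dSHF} recalled in Appendix~\ref{sec:crit-wind}, and to track carefully the prefactor $\tfrac{1}{2^h}$, which originates in the rescaling $\tfrac{N}{4}$ of \eqref{eq:rescZmeas} combined with the restriction to $\Z^2_{\rm even}$.
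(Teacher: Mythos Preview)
Your proposal is correct and follows essentially the same route as the paper, which does not give a self-contained proof but simply points to \cite[Section~6]{CSZ21} (chaos expansion of the directed polymer partition functions, grouping into pairwise-collision blocks) combined with the Dickman subordinator local limit theory of \cite{CSZ19a}, noting that the resulting kernel matches \cite{GQT21} up to rescaling. Your sketch spells out exactly these ingredients; the only minor caveat is that the uniform moment bounds you invoke are proved in \cite{GQT21,Che21} for the mollified SHE rather than directly for the directed polymer, so strictly speaking one appeals to the analogous estimates in \cite{CSZ21} (or transfers them via the correspondence of Proposition~\ref{th:SHF-SHE}).
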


This result can be proved by arguing as in \cite[Section 6]{CSZ23},
exploiting the local limit theory for the Dickman
subordinator as developed in \cite{CSZ19a}.
Formula~\eqref{eq:Zmomh-kernel} coincides with the one obtained
in \cite{GQT21} up to a simple scaling, see Proposition~\ref{th:SHF-SHE} below.

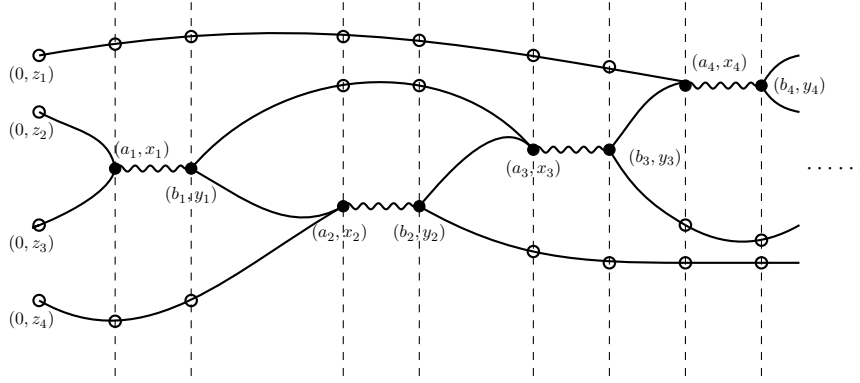
\begin{figure}
\hskip -0.2cm
\begin{tikzpicture}[scale=0.5]
\draw[dashed] (-6,-5)--(-6,5); \draw[dashed] (-4,-5)--(-4,5); \draw[dashed] (0,-5)--(0,5); \draw[dashed] (2,-5)--(2,5); \draw[dashed] (5,-5)--(5,5);
\draw[dashed] (7,-5)--(7,5); \draw[dashed] (9,-5)--(9,5);  \draw[dashed] (11,-5)--(11,5);
\draw  [thick] (-6, -3.55)  circle [radius=0.15]; \draw  [thick] (-4, -3)  circle [radius=0.15]; \draw  [thick] (0, 2.7)  circle [radius=0.15];
\draw  [thick] (2, 2.7)  circle [radius=0.15]; \draw  [thick] (5, -1.7)  circle [radius=0.15];
\draw  [thick] (7, -2)  circle [radius=0.15]; \draw  [fill] (9, 2.7)  circle [radius=0.15];
\draw  [thick] (9, -2)  circle [radius=0.15];
\draw  [thick] (-8,3.5) circle [radius=0.15];
\node at (-8.2,3.0) {\scalebox{0.6}{$(0,z_1)$}};
\draw  [thick] (-6, 3.8)  circle [radius=0.15];
\draw  [thick] (-4, 4.0)  circle [radius=0.15];
\draw  [thick] (2, 3.9)  circle [radius=0.15];
\draw  [thick] (0, 4.0)  circle [radius=0.15];
\draw  [thick] (5, 3.5)  circle [radius=0.15];
\draw  [thick] (7, 3.2)  circle [radius=0.15];
\draw  [fill] (0, -0.5)  circle [radius=0.15]; \node at (-0.1,-1.2) {\scalebox{0.6}{$(a_2,x_2)$}};
\draw [-,thick, decorate, decoration={snake,amplitude=.4mm,segment length=2mm}] (0,-0.5) -- (2,-0.5);
\draw  [fill] (2, -0.5)  circle [radius=0.15]; \node at (2,-1.2) {\scalebox{0.6}{$(b_2,y_2)$}};
\draw[thick] (2,-0.5) to [out=50,in=130] (5,1);
\draw  [fill] (5, 1)  circle [radius=0.15]; \node at (5,0.5) {\scalebox{0.6}{$(a_3,x_3)$}};
 \draw [-,thick, decorate, decoration={snake,amplitude=.4mm,segment length=2mm}] (5,1) -- (7,1);
\draw  [fill] (7, 1)  circle [radius=0.15]; \node at (8.2,0.8) {\scalebox{0.6}{$(b_3,y_3)$}};
\draw[thick] (-4,0.5) to [out=50,in=130] (5,1);
\draw  [fill] (-4,0.5)  circle [radius=0.15]; \node at (-4,-0.2) {\scalebox{0.6}{$(b_1,y_1)$}};
 \draw [-,thick, decorate, decoration={snake,amplitude=.4mm,segment length=2mm}] (-6,0.5) -- (-4,0.5);
 \draw[thick] (-8,-1) to [out=200,in=260] (-6,0.5);
  \draw  [thick] (-8,-1)  circle [radius=0.15];  \node at (-8.2,1.5) {\scalebox{0.6}{$(0,z_2)$}};
   \draw[thick] (-8,2) to [out=-30,in=100] (-6,0.5);
   \draw  [thick] (-8,2)  circle [radius=0.15];  \node at (-8.2,-1.5) {\scalebox{0.6}{$(0,z_3)$}};
 \draw  [fill] (-6,0.5)  circle [radius=0.15]; \node at (-5.3,1) {\scalebox{0.6}{$(a_1,x_1)$}};
 \draw[thick] (-4,0.5) to [out=-30,in=210] (0,-0.5);
  \draw[thick] (-8,-3) to [out=-30,in=210] (0,-0.5); \node at (-8.2,-3.5) {\scalebox{0.6}{$(0,z_4)$}};
   \draw  [thick] (-8,-3)  circle [radius=0.15];
  \draw[thick] (2,-0.5) to [out=-30,in=180] (9,-2);
   \draw  [thick] (9,-1)  circle [radius=0.15];  \node at (9.9,3.3) {\scalebox{0.6}{$(a_4,x_4)$}};
    \draw  [thick] (11,-1.4)  circle [radius=0.15];  \draw  [thick] (11,-2)  circle [radius=0.15];
    \draw[thick] (7,1) to [out=-60,in=150] (9,-1);

    \draw [-,thick, decorate, decoration={snake,amplitude=.4mm,segment length=2mm}] (9,2.7) -- (11,2.7);
    \draw  [fill] (11,2.7)  circle [radius=0.15];  \node at (12,2.7) {\scalebox{0.6}{$(b_4,y_4)$}};
    \draw[thick] (11,2.7) to [out=60,in=190] (12,3.5);
    \draw[thick] (11,2.7) to [out=-60,in=-190] (12, 2);
    \draw[thick] (7,1) to [out=50,in=190] (9,2.8);
    \draw[thick] (-8,3.5) to [out=10,in=170] (9,2.8);
     \draw[thick] (9,-1) to [out=-30,in=-150] (12,-1);  \draw[thick] (9,-2)--(12,-2);
     \node at (13,0.5) {\scalebox{0.8}{$\cdots\cdots$}};
\end{tikzpicture}
\caption{Graphical representation of the kernel $\scrK_t^{(4)}(z_1, z_2, z_3, z_4)$ for the fourth moment,
see \eqref{eq:Zmomh-kernel}. The solid-curved and wiggle lines are assigned the same weights as in Figure \ref{fig:3rd}.
The hollow circles on the vertical dashed lines are where we apply the Champman-Kolmogorov decomposition (see also Remark \ref{Remark-CHK}).} \label{figure-fourth-moment}
\end{figure}

\smallskip

\begin{remark}
The integral
over the space variables $\vec\bx, \vec\by$ in \eqref{eq:Zmomh-kernel}
can be restricted to the subspace
$\big((\R^2)^{h}_{\{i_1,j_1\}}\big)^2 \times \ldots \times  \big( (\R^2)^{h}_{\{i_m,j_m\}}
\big)^2 \subseteq ((\R^2)^{h})^{2m}$, see
\eqref{eq:R2hij}. This is because the kernels $\sfQ_{t}^{\{i,j\}}$ and $\sfG_{\theta,t}^{\{i,j\}}$
in \eqref{eq:sfQ}-\eqref{eq:sfG} are measures supported on $(\R^2)^{h}_{\{i,j\}}$.
\end{remark}

\begin{remark}
Centered moments
$\bbE\big[ \big( \scrZ^\theta_{t}(\varphi) - \bbE[\scrZ^\theta_{t}(\varphi)] \big)^h \big]$
admit formulas analogous to \eqref{eq:Zmomh}-\eqref{eq:Zmomh-kernel}, with a
correlation kernel $K^{(h)}_t(z_1,\ldots, z_h)$ which is obtained from
\eqref{eq:Zmomh-kernel} by removing the constant term ``$1+$'' and
imposing the constraint $\bigcup_{\ell=1}^m \{i_\ell,j_\ell\} = \{1,\ldots, h\}$
in the sum over $\{i_1 \ne j_1\},\ldots,\{i_m \ne j_m\} \subset \{1,\ldots,h\}$
(incidentally, this requires $m \ge \lceil \frac{h}{2}\rceil$).
\end{remark}

\begin{remark}\label{Remark-CHK}
In the special case $h=3$, formulas
\eqref{eq:Zmomh}-\eqref{eq:Zmomh-kernel} are consistent with formulas
\eqref{eq:Zmom3}-\eqref{eq:gm} for the centered third moment.
To check this, it suffices to
decompose the heat kernels $g_{\frac{a_\ell-b_{\ell-2}}{2}}(x_{\ell}-y_{\ell-2})$
in \eqref{eq:gm} at times $a_{\ell-1}, b_{\ell-1}$ by  Chapman-Kolmogorov:
\begin{equation*}
	g_{\frac{a_\ell-b_{\ell-2}}{2}}(x_{\ell}-y_{\ell-2})
	= \!\! \iint\limits_{(\R^2)^2} \!\! \dd x' \, \dd y' \,
	g_{\frac{a_{\ell-1}-b_{\ell-2}}{2}}(x'-y_{\ell-2})
	\, g_{\frac{b_{\ell-1} - a_{\ell-1}}{2}}(y'-x') \,
	g_{\frac{a_\ell-b_{\ell-1}}{2}}(x_{\ell}-y') \,,
\end{equation*}
which gives rise to the operators $\sfQ_{a_\ell-b_{\ell-1}}^{\{i_\ell,j_\ell\}}$,
$\sfG_{\theta,b_{\ell-1}-a_{\ell-1}}^{\{i_{\ell-1},j_{\ell-1}\}}$
and $\sfQ_{a_\ell-b_{\ell-1}}^{\{i_{\ell-1}, j_{\ell-1}\}}$,
see \eqref{eq:sfQ} and \eqref{eq:sfG}. See also Figure \ref{figure-fourth-moment} for
the application of Chapman-Kolmogorov (in the case $h=4$) .
\end{remark}

\section{GMC and its moments}
\label{GMC-recap}

As already mentioned in the introduction, a nice review of the Gaussian Multiplicative Chaos
(GMC) and
its various connections can be found in \cite{RV14}. Here we present its definition and the structure
of its moments, which is relevant towards our goals.

\subsection{Construction of GMC}
Let $k: \R^2 \times \R^2 \to \R \cup \{+\infty\}$ be a kernel which is
symmetric, locally integrable and positive definite, i.e.\
$\iint_{\R^2\times\R^2} \varphi(x) \, k(x,y) \, \varphi(y) \, \dd x \, \dd y \ge 0$
for all $\varphi \in C_c(\R^2)$.
Let $\scrX = (\scrX(\varphi))_{\varphi \in C_c(\R^2)}$ be
the centered Gaussian field with covariance
\begin{equation*}
	k(\varphi,\psi) := \iint\limits_{\R^2\times\R^2}
	\varphi(x) \, k(x,y) \, \psi(y) \, \dd x \, \dd y
	\qquad \text{ for $\varphi, \psi \in C_c(\R^2)$} \,.
\end{equation*}

Let us fix a locally finite measure $\mu$ on $\R^2$.
The Gaussian Multiplicative Chaos (GMC) associated to $\scrX$
with respect to the measure $\mu$,
denoted by  $\scrM(\dd x)$, is formally given by
\begin{equation*}
	\scrM(\dd x) = \, : \exp(\scrX(x)) \, \mu(\dd x) : \,.
\end{equation*}
For a precise definition, for $\epsilon > 0$ we take a continuous
regularization  $k_\epsilon(x,y)$ of $k(x,y)$, still positive definite,
such that $\lim_{\epsilon \downarrow 0} k_\epsilon(x,y) = k(x,y)$
locally uniformly in $x,y$.
We can then consider
the centered Gaussian process $\scrX_\epsilon = (\scrX_\epsilon(x))_{x\in\R^2}$
with covariance $k_\epsilon(x,y)$, which is well-defined pointwise,
and we define for $\epsilon > 0$
\begin{equation*}
	\scrM_\epsilon(\dd x) := \rme^{\scrX_\epsilon(x)
	- \frac{1}{2} \bbE[\scrX_\epsilon(x)^2]} \, \mu(\dd x)
	= \rme^{\scrX_\epsilon(x)
	- \frac{1}{2} k_\epsilon(x,x)} \, \mu(\dd x) \,.
\end{equation*}
The GMC $\scrM(\dd x)$ is then defined as the following limit in distribution:
\begin{equation*}
	\scrM(\dd x) := \lim_{\epsilon \downarrow 0} \scrM_\epsilon(\dd x) \,,
\end{equation*}
assuming that it exists in the vague sense: for $\varphi \in C_c(\R^2)$,
\begin{equation*}
	\scrM_\epsilon(\varphi) := \int\limits_{\R^2} \varphi(x) \, \scrM_\epsilon(\dd x)
	\ \xrightarrow[\epsilon\downarrow 0]{} \
	\scrM(\varphi) := \int\limits_{\R^2} \varphi(x) \, \scrM(\dd x) \,.
\end{equation*}

\subsection{Moments of GMC}
By construction, for $\epsilon > 0$ we have
\begin{equation} \label{eq:Mmom1}
	\bbE[\scrM_\epsilon(\varphi)] = \int\limits_{\R^2} \varphi(z) \, \mu(\dd z) \, .
\end{equation}
Since $\bbE[\rme^{\scrX_\epsilon(x) + \scrX_\epsilon(y)}] =
\bbE[\rme^{\frac{1}{2}\bbvar [\scrX_\epsilon(x) + \scrX_\epsilon(y)]}]
= \rme^{\frac{1}{2} \{k_\epsilon(x,x) + k_\epsilon(y,y) + 2k_\epsilon(x,y)\}}$,
we obtain
\begin{equation} \label{eq:Mmom2}
	\bbE\big[ \scrM_\epsilon(\varphi)^2\big]
	= \iint_{\R^2\times\R^2} \varphi(z_1) \, \varphi(z_2) \, \rme^{k_\epsilon(z_1,z_2)}
	\, \mu(\dd z_1) \, \mu(\dd z_2) \,.
\end{equation}
Similarly, since $\bbE[\rme^{\scrX_\epsilon(z_1) + \cdots + \scrX_\epsilon(z_m)}]
= \rme^{\frac{1}{2} \sum_{i,j=1}^m k_\epsilon(z_i,z_j)}$,
we have
\begin{equation} \label{eq:Mmom3}
\begin{split}
	& \bbE\big[ \scrM_\epsilon(\varphi)^m\big]
	= \int_{(\R^2)^m} \varphi(z_1) \cdots \varphi(z_m) \,
	\rme^{\sum_{1\leq i<j\leq m}k_\epsilon(z_i, z_j) }
	\, \mu(\dd z_1) \cdots  \mu(\dd z_m) \,.
\end{split}
\end{equation}
When we let $\epsilon \downarrow 0$, these formulas apply
to $\scrM(\varphi)$ once we replace $k_\epsilon(z_i,z_j)$
by $k(z_i,z_j)$.

Let us now record the centered second and third moments of GMC.
\begin{itemize}
\item \emph{Centered second moment}:
\begin{equation} \label{eq:Mmom2bis}
\begin{gathered}
	\bbE\big[ \big( \scrM(\varphi) - \bbE[\scrM(\varphi)] \big)^2 \big]
	= \int\limits_{(\R^2)^2} \varphi(z_1) \, \varphi(z_2) \,
	K^{(2)}_{\GMC}(z_1, z_2) \, \mu(\dd z_1) \, \mu(\dd z_2) \\
	\text{where} \qquad
	K^{(2)}_{\GMC}(z_1, z_2) := \rme^{k(z_1,z_2)}-1 \,.
\end{gathered}
\end{equation}
\item \emph{Centered third moment}:
\begin{equation} \label{eq:Mmom3bis}
\begin{gathered}
	\bbE\big[ \big( \scrM(\varphi) - \bbE[\scrM(\varphi)] \big)^3 \big]
	= \!\!\!\! \int\limits_{(\R^2)^3} \!\!\!\!
	\varphi(z_1)  \varphi(z_2)  \varphi(z_3)\,
	K^{(3)}_{\GMC}(z_1, z_2, z_3) \,
	\mu(\dd z_1) \mu(\dd z_2) \mu(\dd z_3)  \\
	\text{where} \qquad
	K^{(3)}_{\GMC}(z_1, z_2, z_3) \,:= \,
	\prod_{1 \le i < j \le 3}\rme^{k(z_i,z_{j})}
	- \sum_{1 \le i < j \le 3} \rme^{k(z_i,z_{j})} + 2 \,.
\end{gathered}
\end{equation}
\end{itemize}
Comparing \eqref{eq:Mmom3bis} with \eqref{eq:Mmom2bis},
we see that the following structural relation holds:
\begin{equation} \label{eq:GMC-3-2}
\begin{split}
	K_\GMC^{(3)}(z_1, z_2, z_3) \,=\,
	K_\GMC^{(2)}(z_1, z_2) \, K_\GMC^{(2)}(z_2, z_3) \, K_\GMC^{(2)}(z_1, z_3)
	\qquad\qquad & \\
	+ \ K_\GMC^{(2)}(z_1, z_2) \, K_\GMC^{(2)}(z_2, z_3) & \\
	\, + \ K_\GMC^{(2)}(z_1, z_2) \, K_\GMC^{(2)}(z_1, z_3)& \\
	\,+\ K_\GMC^{(2)}(z_1, z_3) \, K_\GMC^{(2)}(z_2, z_3) & \,.
\end{split}
\end{equation}

\subsection{A GMC Matching the First Two Moments of SHF}\label{ss:GMC}
Henceforth we denote by
$\mathscr{M}_{t}^\theta(\dd x)$ the GMC
with the same first and second moments
as the SHF $\scrZ_{t}^\theta(\dd x)$.
Comparing \eqref{eq:Mmom1} and \eqref{eq:Mmom2bis}
with \eqref{eq:Zmom1} and \eqref{eq:Zmom2},
we see that this can be obtained once we fix
\begin{equation} \label{eq:wefix}
	\mu(\dd x) := \frac{1}{2} \, \dd x  \,,
	\qquad
	k_t(z_1,z_2) = \log\big(1 + K^{(2)}_t(z_1, z_2) \big) \,,
\end{equation}
where $K^{(2)}_t$ is defined in \eqref{eq:Zmom2}.
This ensures that $K^{(2)}_\GMC(z_1,z_2) = K_t^{(2)}(z_1,z_2)$.\footnote{By
\eqref{eq:second}-\eqref{eq:rhot}
for the uncentered correlation function $\tfrac{1}{4}\, \scrK_{t,\theta}^{(2)}(z_1, z_2)= e^{k_t(z_1, z_2)}$,
the covariance kernel of the Gaussian field underlying the GMC satisfies
$k_t(z_1, z_2)\sim \log \log \frac{1}{|z_1-z_2|}$ as $|z_1-z_2|\to 0$.}
To show that  $\scrZ_{t}^\theta(\dd x)$ is not a GMC, it suffices to show that the higher moments
of  $\mathscr{M}_{t}^\theta(\dd x)$ and $\scrZ_{t}^\theta(\dd x)$ do not match.

\section{Proof of Theorem~\ref{th:3mom}: lower bounds via Gaussian integrals}
\label{via-integral}

In this section we prove Theorem~\ref{th:3mom}:
the third moment of the critical $2d$ SHF $\mathscr{Z}_{t}^\theta(\varphi)$
is strictly larger than that of a GMC $\mathscr{M}_{t}^\theta(\varphi)$ with matching
first and second moments,
when averaged over suitable integrable functions $\varphi: \R^2 \to [0,\infty)$.

\begin{remark}\label{rem:radially}
Most steps of our analysis cover any integrable function
$\varphi: \R^2 \to [0,\infty)$ which is
\emph{radially symmetric and non-increasing}, that is
$\varphi(x) = \rho(|x|)$ for some non-increasing function
$\rho: [0,\infty) \to [0,\infty]$, with $|\cdot|$ the Euclidean norm.
Only in the last step we need a basic inequality, see Proposition~\ref{th:bound-comp},
that we prove when $\varphi$ is the heat kernel or the indicator function of a ball,
as in Theorem~\ref{th:3mom}.
We believe that Proposition~\ref{th:bound-comp}
should hold in greater generality ---possibly, as soon as $\rho$ is log-concave---
but this remains open.
\end{remark}

\smallskip

Let us fix an integrable function $\varphi$, $t>0$ and $\theta \in \R$. Our goal is to prove that
\begin{equation*}
	\bbE\big[ \mathscr{Z}_{t}^\theta(\varphi)^3 \big] >
	\bbE\big[ \mathscr{M}_{t}^\theta(\varphi)^3 \big] \,.
\end{equation*}
Since first and second moments match, it is equivalent to work with
\emph{centered} third moments:
\begin{equation} \label{eq:lb-goal}
	\bbE\big[ \big(\mathscr{Z}_{t}^\theta(\varphi) - \bbE[\mathscr{Z}_{t}^\theta(\varphi)]\big)^3
	\big] >
	\bbE\big[ \big( \mathscr{M}_{t}^\theta(\varphi) - \bbE[\mathscr{M}_{t}^\theta(\varphi)]\big)^3
	\big] \,.
\end{equation}
In view of \eqref{eq:Zmom3} and \eqref{eq:Mmom3bis},
see also \eqref{eq:wefix}, we can
rewrite \eqref{eq:lb-goal} as
\begin{equation} \label{eq:K3bound}
	K_t^{(3)}(\varphi) > K^{(3)}_\GMC(\varphi) \,,
\end{equation}
where, given a kernel $H(z_1, z_2, z_3)$, we use the shorthand
\begin{equation}\label{eq:Kphi}
	H(\varphi) := \iiint\limits_{(\R^2)^3}
	\varphi(z_1) \, \varphi(z_2) \, \varphi(z_3) \,
	H(z_1, z_2, z_3) \,
	\dd z_1 \, \dd z_2 \, \dd z_3 \,.
\end{equation}

It remains to prove \eqref{eq:K3bound}.
The kernel $K^{(3)}_t$ is complicated,
but we can perform an almost exact computation of the
function $\textbf{g}^{(m)}_{a_1,b_1,\,\ldots,\, a_m,b_m}(z_1, z_2, z_3)$
in \eqref{eq:gm}, see Proposition~\ref{th:comutation-gm} below.
From this we obtain a lower bound on $K^{(3)}_t(\varphi)$
(Proposition~\ref{th:3lb}), that we complement with an upper bound
on $K^{(3)}_\GMC(\varphi)$ (Proposition~\ref{th:2ub}). At last,
we will show that these bounds are compatible (Proposition~\ref{th:bound-comp}),
which yields our goal \eqref{eq:K3bound}.

\smallskip

Let us introduce two key quantities $\scrG_{a_1, a_2}(\varphi)$
and $\widetilde\scrG_{a_1, a_2}(\varphi)$ that enter our bounds:
\begin{align}\label{eq:Gphi}
	\scrG_{a_1, a_2}(\varphi)
	& := (2\pi)^2
	\iiint\limits_{(\R^2)^3} \,
	\varphi(z_1) \, \varphi(z_2) \, \varphi(z_3)
	\ g_{a_1}(z_2-z_1) \, g_{a_2}(z_3 - \tfrac{z_1+z_2}{2})
	\ \dd \vec{z} \,, \\
	\label{eq:Gphi2}
	\widetilde\scrG_{a_1, a_2}(\varphi) &:= (2\pi)^2
	\iiint\limits_{(\R^2)^3} \,
	\varphi(z_1) \, \varphi(z_2) \, \varphi(z_3)
	\ g_{a_1}(z_2-z_1) \, g_{a_2}(z_3 - z_2)
	\ \dd \vec{z} \,,
\end{align}
where $g_t(z)$ denotes the heat kernel, see \eqref{eq:gt}.
We can now state our lower bound on $K_t^{(3)}(\varphi)$ which
involves the quantity $\scrG_{a_1, a_2}(\varphi)$.

\begin{proposition}[Third moment lower bound for the SHF]\label{th:3lb}
Fix $\theta \in \R$ and $t > 0$.
Let $K_t^{(3)}$ be the centered third moment kernel of the critical $2d$ SHF $\scrZ_t^\theta$,
see \eqref{eq:Zmom3}-\eqref{eq:K3}.
For any integrable function
$\varphi: \R^2 \to [0,\infty)$ which is  radially symmetric and non-increasing
(see Remark~\ref{rem:radially}), we have the strict lower bound
\begin{equation}\label{eq:3momlb}
	K_t^{(3)}(\varphi) \,>\, I_t^{(3)}(\varphi)  \,,
\end{equation}
where we define
\begin{equation}\label{eq:I3}
\begin{split}
	I_t^{(3)}(\varphi) :=\,
	3  \sum_{m=2}^\infty 2^{m-1} \,
	\!\!\!\!\!\!\!
	\idotsint\limits_{0 < a_1 < b_1 < \ldots < a_m < b_m < t}
	\!\!\!\!\!\!\!
	\scrG_{a_1, a_2}(\varphi) \, G_\theta(b_1-a_1) \,  G_\theta(b_2-a_2) & \\
	\times \, \prod_{i=3}^m \frac{G_\theta(b_i-a_i)}{a_i-b_{i-2}} &
	\, \dd \vec{a} \, \dd \vec{b} \,,
\end{split}
\end{equation}
with $\scrG_{a_1, a_2}(\varphi)$ as in \eqref{eq:Gphi} and $G_\theta$ as in \eqref{g-theta}.
\end{proposition}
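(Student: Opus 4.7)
My plan is to work directly with the explicit formulas \eqref{eq:Zmom3}--\eqref{eq:gm} for $K_t^{(3)}$, using the ``almost exact'' computation of $\textbf{g}^{(m)}$ anticipated by Proposition~\ref{th:comutation-gm}. By permutation symmetry of $\varphi(z_1)\varphi(z_2)\varphi(z_3)$, the three cyclic terms in \eqref{eq:K3} contribute equally after integration, which accounts for the prefactor $3$ in \eqref{eq:I3}. For fixed $m \ge 2$, the goal is then to lower bound $(2\pi)^m \iiint \varphi(z_1)\varphi(z_2)\varphi(z_3)\, \textbf{g}^{(m)}\, d\vec z$ by the corresponding term involving $\scrG_{a_1,a_2}(\varphi)$ and $\prod_{i=3}^m \tfrac{1}{a_i - b_{i-2}}$.

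The first step is the Gaussian identity $g_{a_1/2}(x_1-z_1)\, g_{a_1/2}(x_1-z_2) = g_{a_1}(z_1-z_2)\, g_{a_1/4}(x_1 - m_1)$ with $m_1 := \tfrac{z_1+z_2}{2}$, which extracts the factor $g_{a_1}(z_1-z_2)$ of $\scrG_{a_1,a_2}(\varphi)$ from the first-collision block. Iteratively applying Chapman--Kolmogorov ($g_s * g_t = g_{s+t}$) together with the Gaussian product formula to the remaining integrations over $x_1, y_1, x_2, y_2, \ldots$ collapses $\textbf{g}^{(m)}$ into an explicit product of a Gaussian of variance $\sigma_m$ in the variable $z_3 - m_1$, times constants. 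Checking $m = 2$ by hand gives $\textbf{g}^{(2)}(z_1,z_2,z_3) = g_{a_1}(z_1-z_2)\, g_{a_2 - b_1/4}(z_3 - m_1)$, so $\sigma_2 = a_2 - b_1/4 \le a_2$. For $m \ge 3$ the additional reductions are expected to produce extra multiplicative factors of the form $\tfrac{1}{\pi \Delta_\ell}$ with $\Delta_\ell \le a_\ell - b_{\ell-2}$.

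Given these controls on $\sigma_m$ and $\Delta_\ell$, the lower bound $I_t^{(3)}$ will follow once one establishes the strict monotonicity
\[
\sigma \mapsto \iiint \varphi(z_1)\varphi(z_2)\varphi(z_3)\, g_{a_1}(z_2-z_1)\, g_\sigma(z_3-m_1)\, d\vec z
\]
is decreasing in $\sigma \ge 0$. For the heat kernel $\varphi = g_t$ this is an elementary Gaussian computation: after passing to center-of-mass coordinates the quantity reduces to a rational function of the form $C/(3t+2\sigma)$ with $C = C(t,a_1) > 0$, which is strictly decreasing. For the indicator of a ball (and more generally for radially symmetric non-increasing $\varphi$) it reduces to a rearrangement-type inequality. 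Combining this monotonicity with $\sigma_m \le a_2$, the bound $\tfrac{1}{\pi \Delta_\ell} \ge \tfrac{1}{\pi(a_\ell - b_{\ell-2})}$, and matching the factors of $2\pi$ between the $(2\pi)^m$ in $K_t^{(3)}$ and the $(2\pi)^2$ built into $\scrG_{a_1,a_2}$, yields \eqref{eq:3momlb}. Strictness follows already at the $m=2$ level from $\sigma_2 = a_2 - b_1/4 < a_2$ (since $b_1 > a_1 > 0$).

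The main obstacle is the iterated Chapman--Kolmogorov bookkeeping for $m \ge 3$ and the careful verification that $\sigma_m \le a_2$ and $\Delta_\ell \le a_\ell - b_{\ell-2}$ uniformly in $m$, which is the content of Proposition~\ref{th:comutation-gm}. A secondary but nontrivial difficulty is the $\sigma$-monotonicity for general radially symmetric non-increasing $\varphi$ beyond the Gaussian case; this is presumably the content of the later Proposition~\ref{th:bound-comp} referred to in the discussion preceding the statement.
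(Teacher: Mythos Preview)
Your approach is correct and matches the paper's proof: the paper also invokes Proposition~\ref{th:comutation-gm} to reduce $\textbf{g}^{(m)}(\varphi)$ to $\frac{1}{(2\pi)^2}\,\scrG_{a_1,\overline{a_2}^{(m)}}(\varphi)\prod_{i=3}^m g_{\overline{a_i-b_{i-2}}^{(m)}}(0)$, then uses the strict monotonicity of $a_2 \mapsto \scrG_{a_1,a_2}(\varphi)$ together with $g_t(0) = \tfrac{1}{2\pi t}$ decreasing to obtain the strict lower bound.

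One correction on the attribution: the $\sigma$-monotonicity for general radially symmetric non-increasing $\varphi$ is \emph{not} the content of Proposition~\ref{th:bound-comp}. In the paper it is a separate short lemma (Lemma~\ref{th:decre}), proved for \emph{all} such $\varphi$ via a layer-cake decomposition reducing to intersections of balls; no restriction to Gaussians or indicators is needed at this stage. Proposition~\ref{th:bound-comp} instead proves the distinct inequality $\scrG_{a_1,a_2}(\varphi) > \widetilde\scrG_{a_1,a_2}(\varphi)$, which is what requires the specific choices of $\varphi$ and is used only later to compare with the GMC upper bound, not in the proof of Proposition~\ref{th:3lb} itself.
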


We refer to Figure~\ref{fig:I3} for a graphical representation of $I_t^{(3)}(\varphi)$
when $\varphi = g_r$ is the heat kernel, in which case $\scrG_{a_1, a_2}(\varphi)$
can be computed explicitly (see Remark~\ref{rem:Gauss-comp}).

We next state an upper bound on $K^{(3)}_\GMC(\varphi)$
which involves the quantity $\widetilde\scrG_{a_1, a_2}(\varphi)$.
Interestingly, this bound applies to any positive integrable function~$\varphi$.

\begin{proposition}[Third moment upper bound for GMC]\label{th:2ub}
Fix $\theta \in \R$ and $t > 0$.
Let $K^{(3)}_\GMC$ be
the centered third moment kernel of the GMC $\scrM_t^\theta$, see \eqref{eq:Mmom3bis}
and \eqref{eq:wefix}.
For any integrable function $\varphi: \R^2 \to [0,\infty)$ we have the strict upper bound
\begin{equation}\label{eq:2momub}
	K^{(3)}_\GMC(\varphi)
	\,<\, \widetilde I_t^{(3)}(\varphi) \,,
\end{equation}
where we define
\begin{equation}\label{eq:tildeI3}
\begin{split}
	\widetilde I_t^{(3)}(\varphi) :=\,
	3  \sum_{m=2}^\infty 2^{m-1} \,
	\!\!\!\!\!\!\!
	\idotsint\limits_{0 < a_1 < b_1 < \ldots < a_m < b_m < t}
	\!\!\!\!\!\!\!
	\widetilde\scrG_{a_1, a_2}(\varphi) \, G_\theta(b_1-a_1) \,  G_\theta(b_2-a_2) & \\
	\times \, \prod_{i=3}^m \frac{G_\theta(b_i-a_i)}{a_i-b_{i-2}} &
	\, \dd \vec{a} \, \dd \vec{b} \,,
\end{split}
\end{equation}
with $\widetilde\scrG_{a_1, a_2}(\varphi)$ as in \eqref{eq:Gphi2} and $G_\theta$ as in \eqref{g-theta}.
\end{proposition}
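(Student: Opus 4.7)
The strategy is to expand $K^{(3)}_\GMC(\varphi)$ using the algebraic identity \eqref{eq:GMC-3-2}, convert each term into a time integral via \eqref{eq:Zmom2}, and then match the resulting expression to the ordered-time series $\widetilde I_t^{(3)}(\varphi)$, absorbing ``overlap'' and ``triangle'' contributions into the higher-$m$ terms.

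\emph{Step 1: Algebraic reduction.} From \eqref{eq:GMC-3-2} and the symmetry of $\varphi\otimes\varphi\otimes\varphi$ under permutations of $z_1,z_2,z_3$, the three ``broken-path'' pair products give equal contributions when integrated, so
\[
K^{(3)}_\GMC(\varphi) \,=\, 3\,\big(K_{12}^{(2)}\,K_{23}^{(2)}\big)(\varphi) \,+\, \big(K_{12}^{(2)}\,K_{13}^{(2)}\,K_{23}^{(2)}\big)(\varphi),
\]
where $K_{ij}^{(2)}:=K_t^{(2)}(z_i,z_j)$ and the notation $H(\varphi)$ is as in \eqref{eq:Kphi}. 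The factor $3$ already matches the outer $3$ in \eqref{eq:tildeI3}.

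\emph{Step 2: Time-integral representation.} Substituting \eqref{eq:Zmom2} into the pair-product term and carrying out the spatial integration against $\varphi^{\otimes 3}$ produces exactly the kernel $\widetilde\scrG_{s_1,s_2}(\varphi)$ of \eqref{eq:Gphi2}:
\[
\big(K_{12}^{(2)}\,K_{23}^{(2)}\big)(\varphi) \,=\, \iiiint\limits_{0<s_i<u_i<t} \widetilde\scrG_{s_1,s_2}(\varphi)\,G_\theta(u_1-s_1)\,G_\theta(u_2-s_2)\,ds_1\,du_1\,ds_2\,du_2,
\]
an \emph{unordered} double time integral. The triple-product term becomes a threefold time integral weighted by the spatial functional $(2\pi)^3\iiint \varphi^{\otimes 3}\,g_{s_{12}}(z_1-z_2)g_{s_{13}}(z_1-z_3)g_{s_{23}}(z_2-z_3)\,d\vec z$.

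\emph{Step 3: Ordered-time decomposition.} Using the symmetry $\widetilde\scrG_{s_1,s_2}(\varphi)=\widetilde\scrG_{s_2,s_1}(\varphi)$, restrict to $s_1<s_2$ with a factor $2$, and then split according to whether $u_1<s_2$ (``well-ordered'') or $u_1>s_2$ (``overlap''). The well-ordered piece yields precisely the $m=2$ contribution $2J_2$ of $\widetilde I_t^{(3)}(\varphi)/3$, so
\[
3\big(K_{12}^{(2)}K_{23}^{(2)}\big)(\varphi) \,=\, 3\cdot 2 J_2 \,+\, 6\,\mathcal E_{\mathrm{ov}}(\varphi),
\]
and it remains to show $6\,\mathcal E_{\mathrm{ov}}(\varphi)+(K_{12}^{(2)}K_{13}^{(2)}K_{23}^{(2)})(\varphi) < 3\sum_{m\ge3}2^{m-1}J_m$.

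\emph{Step 4: Chapman--Kolmogorov insertion and matching to $m\geq 3$.} To absorb the overlap and triple-product terms into higher-$m$ contributions, we insert intermediate time variables in the spirit of Remark~\ref{Remark-CHK}. In the overlap region $s_1<s_2<u_1$ we split the exponential weight $G_\theta(u_1-s_1)$ using the renewal structure of the exponentially weighted Dickman subordinator (of which $G_\theta$ is the renewal density, see \eqref{g-theta}); the resulting convolution produces an additional $G_\theta$ factor together with on-diagonal heat kernel values $g_s(0)=\tfrac{1}{2\pi s}$, which provide the $\tfrac{1}{a_i-b_{i-2}}$ weights in \eqref{eq:tildeI3}. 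A similar insertion handles the triple product, where the three symmetric pairwise Gaussians can be rearranged (by a Gaussian integration) into a path configuration generating $\widetilde\scrG_{a_1,a_2}(\varphi)$. Iterating yields the full series, and the combinatorial factors $2^{m-1}$ arise from the number of admissible insertion patterns at each level.

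\emph{Step 5: Strictness.} Each bound in Step~4 is strict on a set of positive Lebesgue measure (since $\widetilde\scrG_{a_1,a_2}(\varphi)>0$ and $G_\theta>0$, using only $\varphi\geq 0$ not identically zero; if $\varphi\equiv 0$ both sides vanish and the result is vacuous). Summing over $m\geq 2$ therefore yields the strict inequality $K^{(3)}_\GMC(\varphi)<\widetilde I_t^{(3)}(\varphi)$.

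\emph{Main obstacle.} The principal difficulty is identifying the correct Chapman--Kolmogorov / renewal identity used in Step~4, which must (i) correctly produce the specific weights $\prod_{i=3}^m \tfrac{G_\theta(b_i-a_i)}{a_i-b_{i-2}}$, (ii) account for the combinatorial factors $2^{m-1}$, and (iii) simultaneously handle the genuinely three-particle triple product $K_{12}^{(2)}K_{13}^{(2)}K_{23}^{(2)}$, whose spatial kernel is a symmetric triangle of heat kernels rather than a path. This step is the technical heart of the proof and requires careful bookkeeping of the Gaussian spatial integrations together with the renewal structure of the Dickman-type function $G_\theta$.
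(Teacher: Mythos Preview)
Your overall architecture matches the paper's: decompose $K^{(3)}_\GMC$ via \eqref{eq:GMC-3-2} into three pair-product terms plus one triple-product term, rewrite each as ordered time integrals, and show the total is bounded by $\widetilde I_t^{(3)}(\varphi)$ with the combinatorics $3\cdot 2 + 6(2^{m-2}-1)=3\cdot 2^{m-1}$. Steps~1--3 are fine.

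The gap is that Step~4 is only a sketch, and you acknowledge this. The paper fills it with two precise lemmas. For the pair product, it proves an \emph{identity} (Lemma~\ref{th:prod2}): $K_t^{(2)}(z_1,z_2)K_t^{(2)}(z_2,z_3)$ equals the series \eqref{eq:Kprod2alt}, obtained by interlacing the two time sequences into alternating blocks. This is exactly your ``overlap resolved by renewal'' idea, and the renewal identity you allude to, namely $G_\theta(t-s)=\int_s^{\bar t}\!du\int_{\bar t}^t\!dv\,G_\theta(u-s)\,(v-u)^{-1}\,G_\theta(t-v)$, would indeed prove it at the continuum level; the paper instead establishes it by passing to the discrete Dickman renewal $U_N$ and back. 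Either route works, but you have not actually carried one out.

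For the triple product your description (``rearranged by a Gaussian integration into a path configuration'') misses the actual mechanism. The paper first proves an identity (Lemma~\ref{th:prod3}) expressing $K_t^{(2)}(z_1,z_2)K_t^{(2)}(z_2,z_3)K_t^{(2)}(z_1,z_3)$ as a sum over labelled interlacings of three block sequences, with weights $(a_i-b_{\mathrm{prev}(i)})^{-1}$ where $\mathrm{prev}(i)$ is the previous block of the \emph{same} label. The strict bound then comes from two elementary inequalities applied pointwise: first $g_{a_\ell}(z_1-z_3)\le g_{a_\ell}(0)=\tfrac{1}{2\pi a_\ell}<\tfrac{1}{2\pi(a_\ell-b_{\ell-2})}$, which kills the third spatial Gaussian and reduces the spatial integral to $\widetilde\scrG_{a_1,a_2}(\varphi)$; second $a_i-b_{\mathrm{prev}(i)}\ge a_i-b_{i-2}$, which shortens the gaps. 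No Gaussian integration of the triangle is performed. These two pointwise strict inequalities are also what makes \eqref{eq:2momub} strict, rather than the positivity argument you give in Step~5.
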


Note that $\widetilde I_t^{(3)}(\varphi)$ in \eqref{eq:tildeI3}
is like $I_t^{(3)}(\varphi)$ in \eqref{eq:I3}, just
with $\widetilde\scrG_{a_1, a_2}(\varphi)$ in place of $\scrG_{a_1, a_2}(\varphi)$.
If $\scrG_{a_1, a_2}(\varphi) \,>\, \tilde\scrG_{a_1, a_2}(\varphi)$, then
we can combine the bounds  \eqref{eq:3momlb} and \eqref{eq:2momub} to yield
our goal \eqref{eq:K3bound}. We
finally show that this indeed holds when $\varphi$ is the indicator
function of a ball, or the heat kernel,
which completes the proof of Theorem~\ref{th:3mom}.

\begin{proposition}[Comparison of bounds]\label{th:bound-comp}
Recall $\scrG_{a_1, a_2}(\varphi)$ and $\widetilde \scrG_{a_1, a_2}(\varphi)$
from \eqref{eq:Gphi}-\eqref{eq:Gphi2}.
Let $\varphi$ be the indicator function of a ball or the heat kernel, see \eqref{eq:gt}:
\begin{equation}\label{eq:hypphi}
	\varphi = \ind_{\{x\in\R^2: \ |x| \le r\}} \qquad \text{or} \qquad
	\varphi = g_r \,, \qquad r > 0 \,.
\end{equation}
Then we have
\begin{equation}\label{eq:goalGG}
	\scrG_{a_1, a_2}(\varphi) \,>\, \tilde\scrG_{a_1, a_2}(\varphi) \
	\qquad \forall a_1, a_2 > 0 \,.
\end{equation}
Recalling \eqref{eq:I3} and \eqref{eq:tildeI3}, it follows that
for any $\theta \in \R$ and $t > 0$
\begin{equation}\label{eq:bound-comp}
	I^{(3)}_t(\varphi) > \widetilde I^{(3)}_t(\varphi) \,,
\end{equation}
therefore, in view of \eqref{eq:3momlb} and \eqref{eq:2momub},
one has $K_t^{(3)}(\varphi) > K^{(3)}_\GMC(\varphi)$.
\end{proposition}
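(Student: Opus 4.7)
The plan is to prove this in three steps, matching the three inequalities in the statement: the pointwise \eqref{eq:goalGG}, its integrated form \eqref{eq:bound-comp}, and the moment-kernel comparison $K_t^{(3)}(\varphi) > K_{\GMC}^{(3)}(\varphi)$. The latter two I would deduce from the first by direct inspection: the integrands of $I^{(3)}_t(\varphi)$ and $\widetilde I^{(3)}_t(\varphi)$ in \eqref{eq:I3}--\eqref{eq:tildeI3} differ only through $\scrG_{a_1,a_2}(\varphi)$ vs.~$\widetilde \scrG_{a_1,a_2}(\varphi)$, with all remaining factors $G_\theta(b_j-a_j)$ and $1/(a_i-b_{i-2})$ strictly positive, so \eqref{eq:goalGG} gives \eqref{eq:bound-comp}; combining with \eqref{eq:3momlb} and \eqref{eq:2momub} then yields the final inequality. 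The substantive content is thus \eqref{eq:goalGG}.

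For the reduction, which is the same in both cases, I would set $F := \varphi * g_{a_2}$, perform the $z_3$ integration in \eqref{eq:Gphi}--\eqref{eq:Gphi2}, and symmetrize under $z_1 \leftrightarrow z_2$, which recasts \eqref{eq:goalGG} as
\begin{equation*}
\iint_{\R^2\times\R^2} \varphi(z_1)\varphi(z_2)\, g_{a_1}(z_2-z_1) \Bigl[\,2 F\bigl(\tfrac{z_1+z_2}{2}\bigr) - F(z_1) - F(z_2)\Bigr] \dd z_1 \dd z_2 > 0 \,.
\end{equation*}
The bracket will not have a definite sign: $F$ is log-concave by Pr\'ekopa--Leindler but is not midpoint-concave in general, so strict positivity must come from integrating against the weight.

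For $\varphi = g_r$ the plan is to compute everything explicitly. The Gaussian factorization $g_r(z_1) g_r(z_2) = g_{r/2}(\tfrac{z_1+z_2}{2}) \, g_{2r}(z_2-z_1)$ (from completing the square), together with $\int g_s(x) g_t(x) \dd x = g_{s+t}(0)$, collapses $\scrG_{a_1,a_2}(g_r)$---after the change of variables $(z_1,z_2) \mapsto (\bar z, z_2-z_1)$---into a product of two Gaussian values at the origin. For $\widetilde\scrG$ I would integrate first $z_3$ and then $z_1$ via Gaussian convolution, collapsing everything onto a single Gaussian integral in $z_2$. The resulting closed forms
\begin{equation*}
\scrG_{a_1,a_2}(g_r) = \frac{1}{(2r+a_1)\bigl(\tfrac{3r}{2}+a_2\bigr)} \,, \qquad \widetilde\scrG_{a_1,a_2}(g_r) = \frac{1}{3r^2 + 2r(a_1+a_2) + a_1 a_2}
\end{equation*}
satisfy $\widetilde\scrG^{-1} - \scrG^{-1} = \tfrac12 r a_1 > 0$, which establishes \eqref{eq:goalGG} in this case.

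For $\varphi = \mathbf{1}_{B_r}$ no such closed-form cancellation is available, and this will be the main obstacle. I would change variables to $\bar z = (z_1+z_2)/2$, $h = (z_2-z_1)/2$: the parallelogram identity then forces $|\bar z|^2 + |h|^2 \leq r^2$ on the domain, so the integration is localized near the origin. Using rotational invariance I would reduce the four-dimensional spatial integral to one over three radial parameters. The function $F(y) = \P(y + W \in B_r)$ with $W \sim N(0, a_2 I)$ is radial, decreasing, and strictly concave on a neighborhood of the origin (direct computation shows $\nabla^2 F(y) \prec 0$ for $|y|^2 < a_2$), while the weight $\mathbf{1}_{B_r}(z_1) \mathbf{1}_{B_r}(z_2) g_{a_1}(z_2-z_1)$ is itself concentrated near the origin. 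I would split the radial integral according to the sign of the bracket and use explicit one-dimensional estimates on the radial profile of $F$ to show that the positive contribution strictly dominates. Quantifying this dominance will constitute the bulk of the work and is what currently restricts the statement to balls and heat kernels (cf.\ Remark~\ref{rem:radially}).
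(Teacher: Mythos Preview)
Your treatment of the heat-kernel case and of the two deductions \eqref{eq:bound-comp} and $K_t^{(3)}(\varphi)>K_{\GMC}^{(3)}(\varphi)$ matches the paper's argument, including the explicit formulas for $\scrG_{a_1,a_2}(g_r)$ and $\widetilde\scrG_{a_1,a_2}(g_r)$ and the observation $\widetilde\scrG^{-1}-\scrG^{-1}=\tfrac12 r a_1>0$.

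For the ball case your reduction is correct, but the approach you then sketch---splitting the integral by the sign of $2F(\bar z)-F(z_1)-F(z_2)$ and controlling the pieces via radial concavity estimates on $F$---is not the paper's route and, as you yourself note, is where the real difficulty lies; you have not actually carried it out. The paper avoids this analysis entirely. Rather than symmetrizing $\widetilde\scrG$, keep it in the asymmetric form $\iint \varphi(z-y)\,\varphi(z)\,g_{a_1}(y)\,\xi(z)\,\dd y\,\dd z$ with $\xi=\varphi*g_{a_2}$, and apply the layer-cake decomposition to $\xi$ (which is radially symmetric and non-increasing) to reduce to $\xi=\ind_{B(0,t)}$. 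With $\varphi=\ind_{B(0,r)}$ the difference $\scrG-\widetilde\scrG$ then becomes
\[
(2\pi)^2\int_{\R^2} g_{a_1}(y)\,\Bigl\{\mathrm{Leb}\bigl(A_r(y)\cap B(0,t)\bigr)-\mathrm{Leb}\bigl((A_r(y)+\tfrac{y}{2})\cap B(0,t)\bigr)\Bigr\}\,\dd y,
\]
where $A_r(y):=B(\tfrac{y}{2},r)\cap B(-\tfrac{y}{2},r)$ is a convex set symmetric about the origin and $A_r(y)+\tfrac{y}{2}=B(y,r)\cap B(0,r)$. Anderson's inequality gives $\mathrm{Leb}(A\cap B(0,t))\ge\mathrm{Leb}((A+v)\cap B(0,t))$ for any symmetric convex $A$ and any $v$, with strictness for suitable $r,t,y$. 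This single geometric fact replaces all of your proposed estimates and is the key idea you are missing.
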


\begin{remark}\label{rem:Gauss-comp}
When $\varphi = g_r$ is the heat kernel,
$\scrG_{a_1, a_2}(\varphi)$ and $\widetilde\scrG_{a_1, a_2}(\varphi)$ in
\eqref{eq:Gphi}-\eqref{eq:Gphi2} can be computed by an explicit Gaussian integration
(see Subsection~\ref{sec:bound-comp}):
\begin{equation} \label{eq:Ggr}
	\scrG_{a_1, a_2}(g_r)
	= \frac{1}{a_1+2r} \, \frac{1}{a_2 + \frac{3}{2}r} \,, \qquad
	\widetilde\scrG_{a_1, a_2}(g_r)
	= \frac{1}{a_1 a_2 + 2r(a_1+a_2) + 3r^2} \,,
\end{equation}
and in this case
one sees easily that $\scrG_{a_1, a_2}(g_r) > \widetilde\scrG_{a_1, a_2}(g_r)$,
in agreement with \eqref{eq:bound-comp}.

A graphical representation of $I_t^{(3)}(\varphi)$ for $\varphi = g_r$
is given in Figure~\ref{fig:I3}.
\end{remark}

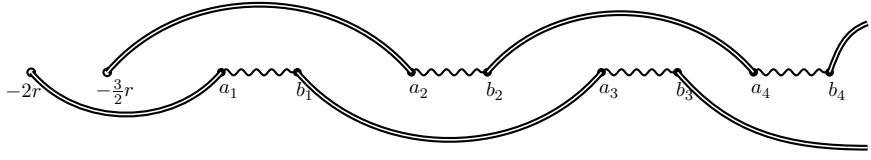
\begin{figure}
\begin{tikzpicture}[scale=0.5]
\draw  [thick] (-10, 0)  circle [radius=0.1]; \draw  [thick] (-8, 0)  circle [radius=0.1];
\draw  [fill] (0, 0)  circle [radius=0.1]; \draw  [fill] (2, 0)  circle [radius=0.1];
\draw  [fill] (5, 0)  circle [radius=0.1]; \draw  [fill] (7, 0)  circle [radius=0.1];
\draw  [fill] (-5,0)  circle [radius=0.1]; \draw  [fill] (-3,0)  circle [radius=0.1];
\draw  [fill] (9,0)  circle [radius=0.1];  \draw  [fill] (11,0)  circle [radius=0.1];
\node at (5.2,-0.5) {\scalebox{0.7}{$a_3$}};  \node at (7.2,-0.5) {\scalebox{0.7}{$b_3$}};
\node at (0.2,-0.5) {\scalebox{0.7}{$a_2$}};  \node at (2.2,-0.5) {\scalebox{0.7}{$b_2$}};
\node at (-4.8,-0.5) {\scalebox{0.7}{$a_1$}};  \node at (-2.8,-0.5) {\scalebox{0.7}{$b_1$}};
\node at (9.2,-0.5) {\scalebox{0.7}{$a_4$}};  \node at (11.2,-0.5) {\scalebox{0.7}{$b_4$}};
\node at (-10.2,-0.5) {\scalebox{0.7}{$-2r$}};  \node at (-7.8,-0.5) {\scalebox{0.7}{$-\frac{3}{2}r$}};
\draw [-,thick, decorate, decoration={snake,amplitude=.4mm,segment length=2mm}] (0,0) -- (2,0);
\draw [-,thick, decorate, decoration={snake,amplitude=.4mm,segment length=2mm}] (5,0) -- (7,0);
\draw [-,thick, decorate, decoration={snake,amplitude=.4mm,segment length=2mm}] (-5,0) -- (-3,0);
\draw [-,thick, decorate, decoration={snake,amplitude=.4mm,segment length=2mm}] (9,0) -- (11,0);
\draw[double, thick] (-10,0) to [out=-50,in=-130] (-5,0); \draw[double, thick] (-3,0) to [out=-50,in=-130] (5,0); \draw[double, thick] (7,0) to [out=-50,in=-180] (12,-2);
\draw[double, thick] (-8,0) to [out=50,in=130] (0,0); \draw[double, thick] (2,0) to [out=50,in=130] (9,0); \draw[double, thick] (11,0) to [out=70,in=200] (12,1.3);
\end{tikzpicture}
\caption{Graphical representation of $I_t^{(3)}(\varphi)$,
see \eqref{eq:I3}, when $\varphi =g_r$ is the heat kernel,
see \eqref{eq:Ggr}. More specifically, we represent the term $m=4$ in the series in \eqref{eq:I3}.
Double solid lines from $b_{i-2}$ to $a_i$ are assigned weights $(a_i - b_{i-2})^{-1}$,
while wiggle lines from $a_i$ to $b_i$ are assigned weights $G_\theta(b_i-a_i)$.}
\label{fig:I3}
\end{figure}

\smallskip

It only remains to prove Propositions~\ref{th:3lb}, \ref{th:2ub} and~\ref{th:bound-comp}, to which
Subsections~\ref{sec:3lb}, \ref{sec:2ub} and~\ref{sec:bound-comp} are devoted.

\subsection{Proof of Proposition~\ref{th:3lb}}
\label{sec:3lb}

The heart of the proof is the following
``computation'' of the function $\textbf{g}^{(m)}_{a_1,b_1,\ldots,a_m,b_m}(z_1, z_2, z_3)$
in \eqref{eq:gm}, which we will prove below.

\begin{proposition}\label{th:comutation-gm}
For $m \ge 2$, $0 < a_1 < b_1 < \ldots < a_m < b_m$,  $z_1, z_2, z_3 \in \R^2$
and ${\rm{ \bf g}}^{(m)}_{a_1,b_1,\ldots,a_m,b_m}(z_1, z_2, z_3)$
as in \eqref{eq:gm}, we have
\begin{equation}\label{eq:gmalt}
	{\rm {\bf g}}^{(m)}_{a_1,b_1,\ldots,a_m,b_m}(z_1, z_2, z_3)
	= g_{a_1}(z_1-z_2) \cdot g_{\overline{a_2}^{(m)}}\big( z_3 - \tfrac{z_1+z_2}{2}\big) \cdot
	\prod_{i=3}^m g_{\overline{a_i-b_{i-2}}^{(m)}}(0) \,,
\end{equation}
for suitable variables $\overline{a_2}^{(m)}$ and $\overline{a_i-b_{i-2}}^{(m)}$
(depending on $a_1,b_1,\ldots,a_m,b_m$) which satisfy
\begin{equation}\label{eq:variables}
\begin{gathered}
	\overline{a_2}^{(m)}\le a_2 - \tfrac{b_1}{4} < a_2 \,, \\
	\overline{a_i-b_{i-2}}^{(m)} \le a_i-b_{i-2} - \tfrac{b_{i-1}-a_{i-1}}{4} < a_i-b_{i-2} \,.
\end{gathered}
\end{equation}
\end{proposition}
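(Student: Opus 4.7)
The proof proceeds by induction on $m$, resting on three elementary identities for the two-dimensional heat kernel: the Chapman--Kolmogorov formula $\int g_s(y-u)\,g_t(x-y)\,\dd y = g_{s+t}(x-u)$; the ``product'' identity
\begin{equation*}
g_{s_1}(x-u)\,g_{s_2}(x-v) = g_{s_1+s_2}(u-v)\cdot g_{\frac{s_1s_2}{s_1+s_2}}\!\left(x-\tfrac{s_2u+s_1v}{s_1+s_2}\right),
\end{equation*}
which splits two Gaussians sharing the variable $x$ into a ``gap'' factor and a shifted Gaussian; and the ``collision'' identity
\begin{equation*}
g_{s_1}(w)\,g_{s_2}(w) = g_{s_1+s_2}(0)\cdot g_{\frac{s_1s_2}{s_1+s_2}}(w),
\end{equation*}
which is the source of the $g_\bullet(0)$ factors in \eqref{eq:gmalt}. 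The base case $m=2$ is a direct computation: apply the product identity to $g_{a_1/2}(x_1-z_1)\,g_{a_1/2}(x_1-z_2)$, extract the factor $g_{a_1}(z_1-z_2)$, and absorb $x_1$ by Chapman--Kolmogorov against $g_{(b_1-a_1)/4}(y_1-x_1)$; then integrate $y_2$ (trivial), absorb $y_1$ by Chapman--Kolmogorov into $g_{(a_2-b_1)/2}(x_2-y_1)$, and absorb $x_2$ by the product identity. This yields $\textbf{g}^{(2)} = g_{a_1}(z_1-z_2)\cdot g_{a_2-b_1/4}\big(z_3-\tfrac{z_1+z_2}{2}\big)$, so $\overline{a_2}^{(2)} = a_2-b_1/4$ with no ``collision'' term, matching \eqref{eq:gmalt} and \eqref{eq:variables} with equality.

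For the inductive step the plan is to peel off the three ``last'' integration variables $y_m$, $x_m$, $y_{m-1}$, reducing $\textbf{g}^{(m)}$ to a copy of $\textbf{g}^{(m-1)}$ in which one single variance parameter has been strictly decreased. Specifically: (i) $y_m$ integrates to $1$; (ii) the product identity applied to $g_{(a_m-b_{m-2})/2}(x_m-y_{m-2})\,g_{(a_m-b_{m-1})/2}(x_m-y_{m-1})$, followed by integration of $x_m$, leaves the residual $g_{a_m-(b_{m-2}+b_{m-1})/2}(y_{m-2}-y_{m-1})$; (iii) Chapman--Kolmogorov against $g_{(b_{m-1}-a_{m-1})/4}(y_{m-1}-x_{m-1})$ integrates $y_{m-1}$ and produces $g_{\alpha_m}(y_{m-2}-x_{m-1})$ with $\alpha_m = a_m-a_{m-1}/4-b_{m-2}/2-b_{m-1}/4$; (iv) the collision identity on $g_{(a_{m-1}-b_{m-2})/2}(x_{m-1}-y_{m-2})\cdot g_{\alpha_m}(y_{m-2}-x_{m-1})$ extracts exactly $g_{\sigma_m}(0)$ with $\sigma_m = a_m-b_{m-2}-(b_{m-1}-a_{m-1})/4$, leaving a residual Gaussian in $x_{m-1}-y_{m-2}$ of variance $\delta_m<(a_{m-1}-b_{m-2})/2$. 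After these four steps one has $\textbf{g}^{(m)} = g_{\sigma_m}(0)\cdot\tilde{\textbf{g}}^{(m-1)}$, where $\tilde{\textbf{g}}^{(m-1)}$ has the form of $\textbf{g}^{(m-1)}$ except that the variance of the ``second sub-factor'' of group $m-1$ (namely $g_{(a_{m-1}-b_{m-2})/2}(x_{m-1}-y_{m-2})$ when $m\ge 4$, or $g_{(a_2-b_1)/2}(x_2-y_1)$ when $m=3$) has dropped from $(a_{m-1}-b_{m-2})/2$ to $\delta_m$.

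To close the induction I would strengthen the statement so that \eqref{eq:gmalt} and the bounds \eqref{eq:variables} hold for any modification of $\textbf{g}^{(m)}$ in which the ``second sub-factors'' carry arbitrary positive variances not exceeding the original ones, together with a monotonicity clause: decreasing such an input variance only decreases the inherited output parameters $\overline{a_2}^{(m)}$ and $\overline{a_i-b_{i-2}}^{(m)}$. Setting $\overline{a_m-b_{m-2}}^{(m)} := \sigma_m$ and applying this generalised induction hypothesis to $\tilde{\textbf{g}}^{(m-1)}$ then yields \eqref{eq:gmalt}; the strict inequalities in \eqref{eq:variables} are automatic because $b_\ell-a_\ell>0$ at every level, which forces the collision-identity output $\delta_m$ to be strictly smaller than the input $(a_{m-1}-b_{m-2})/2$. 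The main technical obstacle is precisely the generalised induction hypothesis and verifying its monotonicity clause in a clean way; every other step is a mechanical application of the three Gaussian identities listed above.
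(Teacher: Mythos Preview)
Your plan is correct and rests on the same three Gaussian identities as the paper, but the paper's bookkeeping is slicker and completely sidesteps the strengthened induction hypothesis that you flag as the main obstacle.

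Here is the difference. You peel off $y_m, x_m, y_{m-1}$ and then apply the collision identity, which leaves an object that is \emph{not} a genuine $\textbf{g}^{(m-1)}$: one internal variance $(a_{m-1}-b_{m-2})/2$ has been replaced by $\delta_m$, so you are forced to generalise the statement and carry a monotonicity clause through the induction. The paper instead first rewrites $\textbf{g}^{(m)}$ by integrating out only $y_m, x_m$ (your steps (i)--(ii)), obtaining a form that depends on $a_1, b_1, \ldots, b_{m-1}, a_m$ but \emph{not} on $b_m$, with the entire $a_m$-dependence concentrated in the single factor $g_{a_m-(b_{m-1}+b_{m-2})/2}(y_{m-1}-y_{m-2})$. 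The inductive step (from $m+1$ down to $m$) then peels only two variables from this reduced form, and one application of the triple integral
\[
\int_{\R^2} g_s(x-a)\,g_t(x-b)\,g_u(x-c)\,\dd x
= g_{s+t}(a-b)\,g_{\frac{st}{s+t}+u}\big(c-\tfrac{t a+s b}{s+t}\big)
\]
extracts the factor $g_{\overline{a_{m+1}-b_{m-1}}}(0)$ and shows that \emph{what remains is exactly the reduced form of $\textbf{g}^{(m)}$ with $a_m$ replaced by some $\tilde a_m$}, where $b_{m-1}<\tilde a_m<a_m$. Since the proposition is already stated for arbitrary ordered parameters, the induction hypothesis applies verbatim to this shifted instance: no generalisation, no monotonicity clause.

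Put differently, your step (iv) is doing half of the $x_{m-1}$-integration that the paper performs next; had you continued and integrated $x_{m-1}$ via the triple-integral identity rather than stopping at the collision, you would land precisely on the paper's reduction and the strengthened hypothesis would disappear. What your route buys is the explicit top-level value $\overline{a_m-b_{m-2}}^{(m)}=\sigma_m$ with equality in \eqref{eq:variables}; what the paper's route buys is an induction that closes with no auxiliary argument at all.
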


We will also need a
basic monotonicity property for the function $\scrG_{a_1, a_2}(\varphi)$
in \eqref{eq:Gphi}.

\begin{lemma} \label{th:decre}
If $\varphi : \R^2 \to [0,\infty)$ is integrable, radially symmetric and non-increasing
(see Remark~\ref{rem:radially}),
then the function $\scrG_{a_1, a_2}(\varphi)$ in \eqref{eq:Gphi} is strictly decreasing in $a_2 > 0$.
\end{lemma}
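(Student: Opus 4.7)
The plan is to rewrite $\scrG_{a_1, a_2}(\varphi)$ as a Gaussian average of a suitable convolution, then combine radial monotonicity with a stochastic dominance argument.

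First I would change variables $u = (z_1+z_2)/2$, $v = z_2 - z_1$ (Jacobian $1$) in the definition of $\scrG_{a_1, a_2}(\varphi)$ and integrate the kernel $g_{a_2}(z_3 - u)$ against $\varphi(z_3)$, obtaining
\begin{equation*}
\scrG_{a_1, a_2}(\varphi) = (2\pi)^2 \int_{\R^2} \psi_{a_1}(u)\, \Phi_{a_2}(u)\, du,
\end{equation*}
with $\psi_{a_1}(u) := \int_{\R^2} \varphi(u-v/2)\,\varphi(u+v/2)\, g_{a_1}(v)\, dv$ and $\Phi_{a_2} := \varphi * g_{a_2}$. Writing $\Phi_{a_2}(u) = \bbE[\varphi(u+\sqrt{a_2}\, W)]$ for a standard Gaussian vector $W$ in $\R^2$, and exploiting the evenness of $\psi_{a_1}$ and $\varphi$, this recasts as
\begin{equation*}
\scrG_{a_1, a_2}(\varphi) = (2\pi)^2\, \bbE\!\left[F(\sqrt{a_2}\, W)\right], \qquad F := \psi_{a_1} * \varphi.
\end{equation*}

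The key analytic step is to show that $F$ is radially non-increasing. Since the convolution of two radially symmetric non-increasing integrable functions on $\R^2$ is again radially symmetric non-increasing (a standard rearrangement fact, via Riesz), it suffices to establish this for $\psi_{a_1}$. When $\varphi$ is log-concave, the claim follows immediately from Pr\'ekopa--Leindler: the integrand defining $\psi_{a_1}$ is log-concave in $(u,v)$, so its $v$-marginal $\psi_{a_1}$ is log-concave in $u$; being also radial (by rotation invariance of $\varphi$ and $g_{a_1}$), it must then be non-increasing along rays from the origin, by an elementary chord argument using midpoints of antipodal pairs. For a general radial non-increasing $\varphi$, I would reduce to the log-concave case via the layer-cake decomposition $\varphi(x) = \int_0^\infty \ind_{B_s}(x)\, d\nu(s)$ with $d\nu = -d\rho \ge 0$, where $\rho$ is the radial profile of $\varphi$; bilinearity of $\psi_{a_1}$ in its two $\varphi$-slots then expresses it as a non-negative superposition of the analogous quantities for indicators of balls, each of which is radially non-increasing by the log-concave case, and non-negative superpositions preserve the property.

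With $F$ radially non-increasing, the strict decrease of $\bbE[F(\sqrt{a_2}\, W)]$ in $a_2$ follows from stochastic monotonicity: $\sqrt{a_2}\,|W|$ is stochastically strictly increasing in $a_2 > 0$, and since $F = \psi_{a_1} * \varphi$ is a non-trivial, non-negative integrable function, its radial profile is strictly decreasing on a set of positive Lebesgue measure, forcing a strict gain. The main obstacle is precisely the radial monotonicity of $\psi_{a_1}$ for non-log-concave $\varphi$: Pr\'ekopa--Leindler cannot be invoked directly, so the layer-cake reduction to indicators of balls (to which it does apply) is essential.
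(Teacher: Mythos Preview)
Your proposal is correct and follows essentially the same route as the paper: both rewrite $\scrG_{a_1,a_2}(\varphi)$ as $(2\pi)^2\,\bE[F(\sqrt{a_2}\,W)]$ for a function $F$ that is a convolution of $\varphi$ with an inner kernel (your $\psi_{a_1}$ is a constant multiple of the paper's $h$, and your $F=\psi_{a_1}*\varphi$ coincides up to a constant with the paper's $f=h*\varphi$), and both then argue that $F$ is radially symmetric and non-increasing, which yields strict decrease in $a_2$.

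The only genuine difference is in how the radial monotonicity of the inner kernel is established. You reduce via layer-cake to indicators of balls and then invoke Pr\'ekopa--Leindler to conclude log-concavity (hence radial monotonicity) of the $v$-marginal. The paper instead applies the layer-cake decomposition to all three factors $\varphi,\varphi,g_{a_1}$ simultaneously and observes directly that the resulting integral is $\mathrm{Leb}\big(B(-w,r_1)\cap B(0,\tfrac{s}{2})\cap B(w,r_2)\big)$, which is visibly radially non-increasing in $w$. Your approach is slightly more high-tech but perfectly valid; the paper's is more elementary and avoids Pr\'ekopa--Leindler altogether. Neither route gains any real generality over the other here.
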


\begin{proof}
By the change of variables
$x := z_1$, $y := z_3 - \frac{z_1+z_2}{2}$, $z := z_3$, we can write
\begin{equation} \label{eq:fg}
	\scrG_{a_1, a_2}(\varphi) := (2\pi)^2
	\int_{\R^2} f(y) \, g_{a_2}(y) \, \dd y \,,
\end{equation}
where we define
\begin{equation} \label{eq:fh}
\begin{split}
	f(y) := \int_{\R^2} h(z-y) \, \varphi(z) \, \dd z \,, \qquad
	h(w) := \int_{\R^2}
	\varphi(x) \, \varphi(2w-x) \, g_{a_1}(2w-2x) \, \dd x \,.
\end{split}
\end{equation}
By \eqref{eq:fg} we can write $\scrG_{a_1, a_2}(\varphi) = (2\pi)^2 \, \E[f(a_2 \, Z)]$, where
$Z$ is a standard Gaussian random variable on $\R^2$
(with density $g_1$). Then,
to prove that $a_2 \mapsto \scrG_{a_1, a_2}(\varphi)$ is strictly decreasing,
it is enough to prove that $f$ is radially non-increasing and integrable
(see Remark~\ref{rem:radially}). The integrability of $f$ is easily seen from \eqref{eq:fh},
which ensures that the volumes of the level sets of  $f(a_2 y)$ are finite and strictly decrease as $a_2$
increases. We then show that
\emph{both $f$ and $h$ are radially symmetric and non-increasing},
which completes the proof.

\smallskip

We recall the layer cake decomposition of a radially symmetric and non-increasing function:
\begin{equation}\label{eq:layer-cake}
	\text{for a.e.\ $x\in\R^2$}: \qquad
	\varphi(x) = \int_0^\infty \ind_{\{|x| < r\}} \, \mu^\varphi(\dd r) \,,
\end{equation}
where $\mu^\varphi$ is a positive measure on $(0,\infty)$,
defined by $\mu^\varphi((r,\infty)) := \varphi((r,0))$.
Using a similar decomposition for $g_{a_1}$,
we replace the three factors $\varphi$, $\varphi$ and $g_{a_1}$ in the definition of $h$
by $\ind_{\{|\cdot|<r_1\}}$, $\ind_{\{|\cdot|<r_2\}}$, and $\ind_{\{|\cdot|<s\}}$
and show that for any $r_1, r_2, s > 0$
the resulting function $\hat h$ is radially symmetric and non-increasing:
\begin{equation} \label{eq:hatfh}
\begin{split}
	\hat h(w) &:= \int_{\R^2}
	\ind_{\{|x| < r_1\}} \, \ind_{\{|x-2w|<r_2\}} \, \ind_{\{|x-w|<\frac{s}{2}\}} \, \dd x \\
	& = \mathrm{Leb}(B(0,r_1) \cap B(w,\tfrac{s}{2}) \cap B(2w, r_2)) \\
	& = \mathrm{Leb}(B(-w,r_1) \cap B(0,\tfrac{s}{2}) \cap B(w, r_2)) \,,
\end{split}
\end{equation}
where $B(z,r) := \{x\in\R^2 \colon |x| < r\}$ is the ball of radius~$r$ centered at~$z$.
It is clear that $\hat h$ is radially symmetric and non-increasing, and so is $h$ since it is
a mixture of $\hat h$ with different values of $r_1, r_2$ and $s$.

Note that we can write $f = \varphi * h$ as the convolution of two
radially symmetric and non-increasing functions.
If we replace $h$ and $\varphi$ by $\ind_{\{|\cdot|<t\}}$
and $\ind_{\{|\cdot|<r\}}$, by the layer cake decomposition,
we get the function
\begin{equation*}
	\hat f(y) = \int_{\R^2} \ind_{\{|z-y|<t\}} \, \ind_{\{|z|<r\}} \, \dd z
	= \mathrm{Leb}(B(0,r) \cap B(y,t)) \,,
\end{equation*}
which is clearly radially symmetric and non-increasing, hence the same holds for $f$.
\end{proof}

It is now easy to prove Proposition \ref{th:3lb}.
When we average ${\rm {\bf g}}^{(m)}_{a_1,b_1,\ldots,a_m,b_m}(z_1, z_2, z_3)$
with respect to the function $\varphi$ as in \eqref{eq:Kphi},
we can apply \eqref{eq:gmalt} to write, recalling \eqref{eq:Gphi},
\begin{equation*}
\begin{split}
	\textbf{g}^{(m)}_{a_1,b_1,\ldots,a_m,b_m}(\varphi)
	& \,=\, \frac{1}{(2\pi)^2} \, \scrG_{a_1, \overline{a_2}^{(m)}}(\varphi) \,
	\prod_{i=3}^m g_{\overline{a_i-b_{i-2}}^{(m)}}(0) \,.
\end{split}
\end{equation*}
Since $t \mapsto g_t(0)$ and $a_2 \mapsto \scrG_{a_1, a_2}(\varphi)$
are strictly decreasing functions, we obtain the bound
\begin{equation*}
\begin{split}
	\textbf{g}^{(m)}_{a_1,b_1,\ldots,a_m,b_m}(\varphi)
	& \,>\, \frac{1}{(2\pi)^2} \, \scrG_{a_1,  a_2}(\varphi) \,
	\prod_{i=3}^m g_{a_i-b_{i-2}}(0)
	\,=\, \frac{1}{(2\pi)^m} \, \scrG_{a_1,  a_2}(\varphi) \prod_{i=3}^m \frac{1}{a_i-b_{i-2}} \,.
\end{split}
\end{equation*}
In fact for $m\geq 3$, this strict inequality already follows from the fact that $t\to g_t(0)$ is strictly decreasing
and $a_2 \mapsto \scrG_{a_1, a_2}(\varphi)$ is non-decreasing.
Plugging this into \eqref{eq:K3}-\eqref{eq:Im}, we obtain $K_t^{(3)}(\varphi) > I_t^{(3)}(\varphi)$
with $I_t^{(3)}(\varphi)$ defined in \eqref{eq:I3}. This completes the proof of Proposition~\ref{th:3lb}.

\medskip

We are left with proving Proposition~\ref{th:comutation-gm}.
A key tool is the following elementary lemma.

\begin{lemma}[Triple Gaussian integral]\label{triple-gauss}
Let $g_t(x)$ be the two-dimensional heat kernel, see \eqref{eq:gt}.
For all $s,t > 0$ and $x,a,b\in\R^2$ we have
\begin{equation}\label{eq:identity}
	g_s(x-a) \, g_t(x-b) = g_{s+t}(a-b) \, g_{h(s,t)}(x-m_{t,s}(a,b)) \,,
\end{equation}
where we set
\begin{equation} \label{eq:hm}
	h(s,t) := \big(\tfrac{1}{s}+\tfrac{1}{t}\big)^{-1} = \frac{st}{s+t} \,, \qquad
	m_{t,s}(x,y) := \frac{t}{s+t} x + \frac{s}{s+t} y \,.
\end{equation}
It follows that for all $s,t,u > 0$ and $a,b,c \in \R^2$ we have
\begin{equation} \label{eq:3int}
	\int_{\R^2} g_s(x-a) \, g_t(x-b) \, g_u(x-c) \, \dd x =
	g_{s+t}(a-b) \, g_{h(s,t)+u}(c - m_{t,s}(a,b)) \,.
\end{equation}
\end{lemma}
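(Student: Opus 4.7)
The plan is a direct Gaussian computation: identity \eqref{eq:identity} follows from completing the square in the exponent of the product of two heat kernels, and \eqref{eq:3int} is then a one-line consequence combining \eqref{eq:identity} with the Chapman–Kolmogorov property $\int_{\R^2} g_\alpha(x-m)\,g_u(x-c)\,\dd x = g_{\alpha+u}(c-m)$.

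First I would expand the exponents on the left-hand side of \eqref{eq:identity}. Using \eqref{eq:gt}, the exponent equals $-\tfrac{1}{2s}|x-a|^2 - \tfrac{1}{2t}|x-b|^2$. Collecting powers of $x$, the quadratic coefficient is $-\tfrac{1}{2}\bigl(\tfrac{1}{s}+\tfrac{1}{t}\bigr) = -\tfrac{1}{2 h(s,t)}$ with $h(s,t)$ as in \eqref{eq:hm}, while the linear coefficient is $\tfrac{a}{s}+\tfrac{b}{t} = \tfrac{ta+sb}{st}$. Completing the square, the exponent rewrites as
\begin{equation*}
-\frac{1}{2\,h(s,t)}\bigl|x - m_{t,s}(a,b)\bigr|^2 \,+\, R(a,b),
\end{equation*}
with $m_{t,s}(a,b) = h(s,t)\cdot\tfrac{ta+sb}{st} = \tfrac{ta+sb}{s+t}$, matching \eqref{eq:hm}. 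A direct computation gives $R(a,b) = -\tfrac{|a-b|^2}{2(s+t)}$: indeed, combining $-\tfrac{t|a|^2+s|b|^2}{2st}$ with $\tfrac{|ta+sb|^2}{2st(s+t)}$ produces the numerator $t^2|a|^2+2st\,a\cdot b+s^2|b|^2 - (s+t)(t|a|^2+s|b|^2) = -st|a-b|^2$. For the normalizing constants, the prefactor on the left is $\tfrac{1}{(2\pi)^2 st}$ and on the right it is $\tfrac{1}{2\pi(s+t)}\cdot \tfrac{1}{2\pi\,h(s,t)} = \tfrac{1}{(2\pi)^2 st}$, using $(s+t)\,h(s,t)=st$. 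This establishes \eqref{eq:identity}.

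For \eqref{eq:3int}, I would apply \eqref{eq:identity} to the first two factors inside the integral, pull the $x$-independent factor $g_{s+t}(a-b)$ outside, and then use Chapman–Kolmogorov: $\int_{\R^2} g_{h(s,t)}(x-m_{t,s}(a,b))\,g_u(x-c)\,\dd x = g_{h(s,t)+u}(c - m_{t,s}(a,b))$. This yields \eqref{eq:3int} immediately.

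Since the argument consists of completion of squares and Chapman–Kolmogorov, there is no real obstacle. The only point requiring a bit of care is the bookkeeping in the algebraic simplification showing $R(a,b) = -\tfrac{|a-b|^2}{2(s+t)}$ and the matching of normalizing constants, both of which rely crucially on the identity $(s+t)\,h(s,t)=st$.
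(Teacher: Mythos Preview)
Your proof is correct and follows exactly the approach indicated in the paper, which simply states that \eqref{eq:identity} follows from the definition of the heat kernel and an easy algebraic manipulation, and that \eqref{eq:3int} then follows by a Gaussian convolution. You have just spelled out those details.
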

\begin{proof}
\eqref{eq:identity} follows directly from the definition \eqref{eq:gt} of the heat
kernel and an easy algebraic manipulation.
Then \eqref{eq:3int} follows by \eqref{eq:identity} and a simple Gaussian convolution.
\end{proof}

\smallskip

\begin{proof}[Proof of Proposition~\ref{th:comutation-gm}]
We first prove \eqref{eq:gmalt}-\eqref{eq:variables} for $m=2$. Recall that, by \eqref{eq:gm},
\begin{equation} \label{eq:g2}
	\begin{split}
	& \textbf{g}^{(2)}_{a_1,b_1,a_2,b_2}(z_1, z_2, z_3)
	\ =
	\iiiint\limits_{(\R^2)^{4}}
	g_{\frac{a_1}{2}}(x_1-z_1) \, g_{\frac{a_1}{2}}(x_1-z_2) \cdot
	g_{\frac{b_1-a_1}{4}}(y_1-x_1) \\
	& \qquad\qquad\qquad\qquad \cdot \,  g_{\frac{a_2}{2}}(x_2-z_3)
	\, g_{\frac{a_2-b_1}{2}}(x_2-y_1) \cdot g_{\frac{b_2-a_2}{4}}(y_2-x_2) \,
	\dd  x_1 \, \dd y_1 \, \dd x_2 \, \dd y_2 \,.
\end{split}
\end{equation}
Since $\int_{\R^2} g_s(x-a) \, g_t(x-b) \, \dd x = g_{s+t}(a-b)$,
we can integrate $y_2$, then $x_2$, then $y_1$ to get
\begin{equation*}
	\begin{split}
	& \iint\limits_{(\R^2)^{2}}
	g_{\frac{a_1}{2}}(x_1-z_1) \, g_{\frac{a_1}{2}}(x_1-z_2) \cdot
	g_{\frac{b_1-a_1}{4}}(y_1-x_1)  \cdot  g_{\frac{a_2}{2} +\frac{a_2-b_1}{2}}(y_1 - z_3)
	\, \dd x_1 \, \dd  y_1 \\
	&  = \int\limits_{\R^2}
	g_{\frac{a_1}{2}}(x_1-z_1) \, g_{\frac{a_1}{2}}(x_1-z_2) \cdot
	g_{\frac{b_1-a_1}{4} + \frac{a_2}{2} +\frac{a_2-b_1}{2}}(x_1 - z_3) \, \dd x_1 \,.
\end{split}
\end{equation*}
Applying \eqref{eq:3int} to compute the last integral over $x_1$, we finally obtain
\begin{equation}\label{eq:m=2}
	\textbf{g}^{(2)}_{a_1,b_1,a_2,b_2}(z_1, z_2, z_3) =
	g_{a_1}(z_1-z_2) \, g_{\overline{a_2}^{(2)}}\big(z_3 - \tfrac{z_1+z_2}{2}\big) \,,
\end{equation}
where we set
\begin{equation}\label{eq:bara}
	\overline{a_2}^{(2)} := \tfrac{a_2}{2} +\tfrac{a_2-b_1}{2} + \tfrac{b_1}{4}
	= a_2 - \tfrac{b_1}{4} \,.
\end{equation}
This completes the proof of \eqref{eq:gmalt}-\eqref{eq:variables} for $m=2$.

We next move to $m \ge 3$.
In formula \eqref{eq:gm},
the terms depending on $x_{m}$ and $y_{m}$  are
\begin{equation} \label{eq:inv1}
\begin{split}
	& g_{\frac{a_{m}-b_{m-2}}{2}}(x_{m}-y_{m-2}) \, g_{\frac{a_{m}-b_{m-1}}{2}}(x_{m}-y_{m-1})
	\cdot g_{\frac{b_{m}-a_{m}}{4}}(y_{m} - x_{m}) \,,
\end{split}
\end{equation}
which after integration over $y_{m}$ and $x_{m}$ give
\begin{equation} \label{eq:inv2}
	g_{\frac{a_{m}-b_{m-2}}{2}+\frac{a_{m}-b_{m-1}}{2}}(y_{m-1} - y_{m-2})
	= g_{a_{m} - \frac{b_{m-1} + b_{m-2}}{2}}(y_{m-1} - y_{m-2}) \,.
\end{equation}
This shows that we can rewrite \eqref{eq:gm} for $m \ge 3$ as follows:
\begin{equation} \label{eq:gmhat}
\begin{split}
	\textbf{g}^{(m)}_{a_1,b_1,\ldots,a_m}(z_1, z_2, z_3)
	\, := \!\!\!\!\!\!\!\!\!\!\iint\limits_{(\R^2)^{m-1} \times (\R^2)^{m-1}} \!\!\!\!\!\!\!\!\!\!
	\dd \vec x \, \dd \vec y \
	g_{\frac{a_1}{2}}(x_1-z_1) \, g_{\frac{a_1}{2}}(x_1-z_2) \cdot
	g_{\frac{b_1-a_1}{4}}(y_1-x_1) & \\
	\cdot \,  g_{\frac{a_2}{2}}(x_2-z_3)
	\, g_{\frac{a_2-b_1}{2}}(x_2-y_1) \cdot g_{\frac{b_2-a_2}{4}}(y_2-x_2) & \\
	\cdot \prod_{i=3}^{m-1}
	\Big\{ g_{\frac{a_i-b_{i-2}}{2}}(x_i-y_{i-2}) \, g_{\frac{a_i-b_{i-1}}{2}}(x_i-y_{i-1})
	\cdot g_{\frac{b_i-a_i}{4}}(y_i - x_i) \Big\} & \\
	\cdot \, g_{a_m - \frac{b_{m-1} + b_{m-2}}{2}}(y_{m-1}-y_{m-2}) \,,
\end{split}
\end{equation}
where we agree that $\prod_{i=3}^{m-1}\{\ldots\} := 1$ for $m=3$.
We note that $b_m$ does not appear in the r.h.s.\ of \eqref{eq:gmhat}, hence we dropped it
from the notation $\textbf{g}^{(m)}_{a_1,b_1,\ldots,a_m}(z_1, z_2, z_3)$.

We are ready to prove \eqref{eq:gmalt}-\eqref{eq:variables} for $m\ge 3$ by induction.
For $m=3$, \eqref{eq:gmhat} becomes
\begin{equation*}
	\begin{split}
	\textbf{g}^{(3)}_{a_1,b_1,a_2,b_2, a_3}(z_1, z_2, z_3)
	= \iiiint\limits_{(\R^2)^{4}}  \
	g_{\frac{a_1}{2}}(x_1-z_1) \, g_{\frac{a_1}{2}}(x_1-z_2) \cdot
	g_{\frac{b_1-a_1}{4}}(y_1-x_1) & \\
	\cdot \,  g_{\frac{a_2}{2}}(x_2-z_3)
	\, g_{\frac{a_2-b_1}{2}}(x_2-y_1) \cdot g_{\frac{b_2-a_2}{4}}(y_2-x_2)  & \\
	\cdot \, g_{a_3-\frac{b_1+b_2}{2}}(y_2-y_1)  \
	\dd  x_1 \, \dd y_1 \, \dd x_2 \, \dd y_2 & \,,
\end{split}
\end{equation*}
and integrating over $y_2$ we obtain
\begin{equation} \label{eq:intermint}
	\begin{split}
	&\iiint\limits_{(\R^2)^{3}}  \
	g_{\frac{a_1}{2}}(x_1-z_1) \, g_{\frac{a_1}{2}}(x_1-z_2) \cdot
	g_{\frac{b_1-a_1}{4}}(y_1-x_1) \\
	& \qquad \cdot \,  g_{\frac{a_2}{2}}(x_2-z_3)
	\, g_{\frac{a_2-b_1}{2}}(x_2-y_1) \cdot
	g_{a_3-\frac{b_1}{2} - \frac{a_2+b_2}{4}}(x_2-y_1) \,
	\dd  x_1 \, \dd y_1 \, \dd x_2 \,.
\end{split}
\end{equation}
When we integrate the last line over $x_2$, by \eqref{eq:3int} we get
\begin{equation*}
\begin{split}
	& g_{a_3-\frac{b_1}{2} - \frac{a_2+b_2}{4}+\frac{a_2-b_1}{2}}(0) \,
	g_{\frac{a_2}{2}
	+ h(\frac{a_2-b_1}{2}, \, a_3-\frac{b_1}{2} - \frac{a_2+b_2}{4})}(y_1-z_3)
	 = g_{\overline{a_3-b_1}^{(3)}}(0) \cdot
	g_{\overline{a_2}^{(3)} - \frac{b_1}{4} }(y_1-z_3)
\end{split}
\end{equation*}
where we define
\begin{equation}\label{eq:tildea}
\begin{split}
	\overline{a_3-b_1}^{(3)} & := (a_3-b_1) - \tfrac{b_2-a_2}{4} \,, \\
	\overline{a_2}^{(3)} &:= \tfrac{a_2}{2} + \tfrac{b_1}{4} +
	h\big(\tfrac{a_2-b_1}{2}, \, a_3-\tfrac{b_1}{2} - \tfrac{a_2+b_2}{4}\big) \,.
\end{split}
\end{equation}
We can then perform the integral over $y_1$ in \eqref{eq:intermint} to get
\begin{equation*}
	\begin{split}
	g_{\overline{a_3-b_1}^{(3)}}(0)   \int\limits_{\R^2}  \
	g_{\frac{a_1}{2}}(x_1-z_1) \, g_{\frac{a_1}{2}}(x_1-z_2) \cdot
	g_{\overline{a_2}^{(3)} - \frac{a_1}{4}}(x_1-z_3) \, \dd  x_1 \,,
\end{split}
\end{equation*}
and a further application of \eqref{eq:3int} finally yields
\begin{equation}\label{eq:m=3}
	\textbf{g}^{(3)}_{a_1,b_1,a_2,b_2, a_3}(z_1, z_2, z_3) =
	g_{a_1}(z_1-z_2) \, g_{\overline{a}^{(3)}_2}\big(z_3 - \tfrac{z_1+z_2}{2}\big)
	\, g_{\overline{a_3-b_1}^{(3)}}(0) \,.
\end{equation}
This proves \eqref{eq:gmalt} for $m=3$. To prove \eqref{eq:variables},
we note that  $h(s,t) < s$, see \eqref{eq:hm}, hence
\begin{equation*}
	\begin{split}
	\overline{a_2}^{(3)} < \tfrac{a_2}{2} +
	\tfrac{b_1}{4} + \tfrac{a_2-b_1}{2}
	= a_2 - \tfrac{b_1}{4} \,.
\end{split}
\end{equation*}

We finally fix $m \ge 3$, we assume that formulas \eqref{eq:gmalt}-\eqref{eq:variables} hold
for $\textbf{g}^{(m)}$ and we prove that they hold for
$\textbf{g}^{(m+1)}$. To this purpose, it is enough to show that
\begin{equation}\label{eq:itisi}
\begin{gathered}
	\textbf{g}^{(m+1)}_{a_1,b_1,\ldots, a_m, b_m, a_{m+1}}(z_1, z_2, z_3)
	= g_{\overline{a_{m+1}-b_{m-1}}^{(m+1)}}(0) \cdot
	\textbf{g}^{(m)}_{a_1,b_1,\ldots, a_{m-1}, b_{m-1}, \widetilde{a_{m}}}(z_1, z_2, z_3) \\
	\text{for suitable} \quad \overline{a_{m+1}-b_{m-1}}^{(m+1)}
	\le a_{m+1}-b_{m-1} - \tfrac{b_m-a_m}{4}
	\quad \text{and} \quad \widetilde{a_{m}} < a_m \,.
\end{gathered}
\end{equation}
Indeed, by the induction step
we can apply \eqref{eq:gmalt}-\eqref{eq:variables} to $\textbf{g}^{(m)}$ in the r.h.s.,
and since $\widetilde{a_m} < a_m$
we obtain \eqref{eq:gmalt}-\eqref{eq:variables} for $\textbf{g}^{(m+1)}$.

It only remains to prove \eqref{eq:itisi}.
If we write formula \eqref{eq:gmhat} for
$\textbf{g}^{(m+1)}_{a_1,b_1,\ldots,a_m, b_m, a_{m+1}}(z_1, z_2, z_3)$, we see that
the terms which depend on $x_m$ and $y_m$ are
\begin{equation*}
\begin{split}
	g_{\frac{a_{m}-b_{m-2}}{2}}(x_{m}-y_{m-2}) \, g_{\frac{a_{m}-b_{m-1}}{2}}(x_{m}-y_{m-1})
	\cdot g_{\frac{b_{m}-a_{m}}{4}}(y_{m} - x_{m})  & \\
	\cdot \, g_{a_{m+1} - \frac{b_{m} + b_{m-1}}{2}}(y_{m}-y_{m-1}) & \,,
\end{split}
\end{equation*}
which after integration over $y_m$ yield
\begin{equation*}
\begin{split}
	& g_{\frac{a_{m}-b_{m-2}}{2}}(x_{m}-y_{m-2}) \,
	g_{\frac{a_{m}-b_{m-1}}{2}}(x_{m}-y_{m-1}) \cdot	
	g_{a_{m+1} - \frac{b_{m-1}}{2} - \frac{a_{m}+b_{m}}{4}}(x_{m} - y_{m-1}) \,.
\end{split}
\end{equation*}
A further integration over $x_m$ gives, by \eqref{eq:3int},
\begin{equation*}
\begin{split}
	& g_{a_{m+1} - \frac{b_{m-1}}{2} - \frac{a_{m}+b_{m}}{4} + \frac{a_{m}-b_{m-1}}{2}}(0)
	 \cdot g_{h(\frac{a_{m}-b_{m-1}}{2}, \,
	a_{m+1} - \frac{b_{m-1}}{2} - \frac{a_{m}+b_{m}}{4})
	+ \frac{a_{m}-b_{m-2}}{2}}(y_{m-1} - y_{m-2}) \\
	& = \, g_{\overline{a_{m+1}-b_{m-1}}^{(m+1)}}(0) \,\cdot\,
	g_{\widetilde{a_{m}} - \frac{b_{m-1}+b_{m-2}}{2}}(y_{m-1} - y_{m-2}) \,,
\end{split}
\end{equation*}
where we define
\begin{equation*}
\begin{split}
	\overline{a_{m+1}-b_{m-1}}^{(m+1)}
	& := (a_{m+1}-b_{m-1}) - \tfrac{b_m-a_m}{4} \,, \\
	\widetilde{a_{m}} &:= \tfrac{a_{m} + b_{m-1}}{2} +
	h\big(\tfrac{a_{m}-b_{m-1}}{2}, \,
	a_{m+1} - \tfrac{b_{m-1}}{2} - \tfrac{a_{m}+b_{m}}{4}\big) \,.
\end{split}
\end{equation*}
Recalling \eqref{eq:gmhat},
we see that \eqref{eq:itisi} holds (note that
$\widetilde{a_m} < a_m$ because $h(s,t) < s$).
\end{proof}

\subsection{Proof of Proposition~\ref{th:2ub}}
\label{sec:2ub}

We recall relation \eqref{eq:GMC-3-2} satisfied by any GMC.
Our choice \eqref{eq:wefix} ensures that
$K_\GMC^{(2)}(z_1, z_2) = K_t^{(2)}(z_1, z_2)$, see \eqref{eq:Zmom2},
hence \eqref{eq:GMC-3-2} becomes
\begin{equation} \label{eq:GMC?}
\begin{split}
	K^{(3)}_\GMC(z_1, z_2, z_3) = K_t^{(2)}(z_1, z_2) \, K_t^{(2)}(z_2, z_3) \, K_t^{(2)}(z_1, z_3)
	+  K_t^{(2)}(z_1, z_2) \, K_t^{(2)}(z_2, z_3) & \\
	\, + \, K_t^{(2)}(z_1, z_2) \, K_t^{(2)}(z_1, z_3) & \\
	\,+\, K_t^{(2)}(z_1, z_3) \, K_t^{(2)}(z_2, z_3) & \,.
\end{split}
\end{equation}

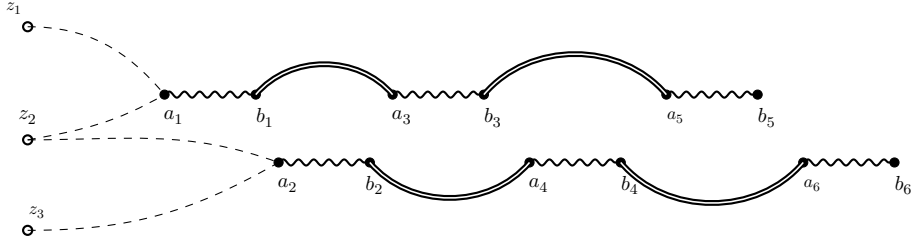
\begin{figure}
\begin{tikzpicture}[scale=0.6]
\draw  [thick] (-8, 1.5)  circle [radius=0.1]; \draw  [thick] (-8, -1)  circle [radius=0.1];
\draw  [fill] (0, 0)  circle [radius=0.1]; \draw  [fill] (2, 0)  circle [radius=0.1];
\draw  [fill] (-5,0)  circle [radius=0.1]; \draw  [fill] (-3,0)  circle [radius=0.1];
\draw  [fill] (6,0)  circle [radius=0.1];  \draw  [fill] (8,0)  circle [radius=0.1];
\node at (0.2,-0.5) {\scalebox{0.7}{$a_3$}};  \node at (2.2,-0.5) {\scalebox{0.7}{$b_3$}};
\node at (-4.8,-0.5) {\scalebox{0.7}{$a_1$}};  \node at (-2.8,-0.5) {\scalebox{0.7}{$b_1$}};
\node at (6.2,-0.5) {\scalebox{0.6}{$a_5$}};  \node at (8.2,-0.5) {\scalebox{0.7}{$b_5$}};
\node at (-8.3,1.9) {\scalebox{0.6}{$z_1$}};  \node at (-8,-0.5) {\scalebox{0.7}{$z_2$}};
\draw [-,thick, decorate, decoration={snake,amplitude=.4mm,segment length=2mm}] (0,0) -- (2,0);
\draw [-,thick, decorate, decoration={snake,amplitude=.4mm,segment length=2mm}] (-5,0) -- (-3,0);
\draw [-,thick, decorate, decoration={snake,amplitude=.4mm,segment length=2mm}] (6,0) -- (8,0);
\draw[dashed] (-8,1.5) to [out=0,in=130] (-5,0); \draw[double, thick] (-3,0) to [out=50,in=130] (0,0);
\draw[double, thick] (2,0) to [out=50,in=130] (6,0);
\draw[dashed] (-8,-1) to [out=10,in=-150] (-5,0);
\draw  [thick] (-8, -3)  circle [radius=0.1];
\draw  [fill] (3, -1.5)  circle [radius=0.1]; \draw  [fill] (5, -1.5)  circle [radius=0.1];
\draw  [fill] (-2.5,-1.5)  circle [radius=0.1]; \draw  [fill] (-0.5,-1.5)  circle [radius=0.1];
\draw  [fill] (9,-1.5)  circle [radius=0.1];  \draw  [fill] (11,-1.5)  circle [radius=0.1];
\node at (3.2,-2) {\scalebox{0.7}{$a_4$}};  \node at (5.2,-2) {\scalebox{0.7}{$b_4$}};
\node at (-2.3,-2) {\scalebox{0.7}{$a_2$}};  \node at (-0.4,-2) {\scalebox{0.7}{$b_2$}};
\node at (9.2,-2) {\scalebox{0.6}{$a_6$}};  \node at (11.2,-2) {\scalebox{0.7}{$b_6$}};
\node at (-7.8,-2.6) {\scalebox{0.6}{$z_3$}};
\draw [-,thick, decorate, decoration={snake,amplitude=.4mm,segment length=2mm}] (3,-1.5) -- (5,-1.5);
\draw [-,thick, decorate, decoration={snake,amplitude=.4mm,segment length=2mm}] (-2.5,-1.5) -- (-0.5,-1.5);
\draw [-,thick, decorate, decoration={snake,amplitude=.4mm,segment length=2mm}] (9,-1.5) -- (11,-1.5);
\draw[dashed] (-8,-3) to [out=0,in=-150] (-2.5,-1.5);
\draw[double, thick] (-0.5,-1.5) to [out=-50,in=-130] (3,-1.5); \draw[double, thick] (5,-1.5) to [out=-50,in=-130] (9,-1.5);
\draw[dashed] (-8,-1) to [out=0,in=160] (-2.5,-1.5);
\end{tikzpicture}
\caption{Graphical representation of
 the term $m=6$ in the series \eqref{eq:Kprod2alt}
 which represents $K_t^{(2)}(z_1, z_2) \, K_t^{(2)}(z_2, z_3)$.
The total weight of the dashed lines from $z_1$ and  $z_2$ to $a_1$ is assigned weight  $g_{a_1}(z_2 - z_1)$
and the total weight of the dashed lines from $z_2$ and $ z_3$ to $a_2$ is assigned weight  $g_{a_2}(z_3 - z_2)$ ;
a double solid line from $b_{i-2}$ to an $a_i$ is assigned weight $(a_i - b_{i-2})^{-1}$;
a wiggle line from an $a_i$ to $b_i$  is assigned weight  $G_\theta(b_i-a_i)$.}
\label{fig:GMC-2}
\end{figure}

We first give an alternative expression, that we prove below, for the product of two
covariance kernels which appear in the r.h.s.\ of \eqref{eq:GMC?}.

\begin{lemma}[Double correlation product] \label{th:prod2}
The following equality holds:
\begin{equation} \label{eq:Kprod2alt}
\begin{split}
	K_t^{(2)}(z_1, z_2) \, K_t^{(2)}(z_2, z_3)
	\, = \, & (2\pi)^2 \sum_{m=2}^\infty
	\quad \idotsint\limits_{0 < a_1 < b_1 < \ldots < a_m < b_m < t} \!\!\!\!\!\!\!\!
	\dd \vec{a} \ \dd\vec{b} \\
	& \ \Big\{ g_{a_1}(z_2-z_1) \, g_{a_2}(z_3-z_2)
	\,+ \, g_{a_1}(z_3-z_2) \, g_{a_2}(z_2-z_1) \Big\} \\
	& \ \cdot G_\theta(b_1-a_1) \, G_\theta(b_2-a_2)
	\prod_{i=3}^m \frac{G_\theta(b_i-a_i)}{a_i-b_{i-2}} \,,
\end{split}
\end{equation}
see Figure~\ref{fig:GMC-2} for a graphical representation.
\end{lemma}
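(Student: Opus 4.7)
The plan is to start from the integral representation of both factors on the left-hand side and re-express the resulting four-fold integral as an interleaved series by an iterated renewal expansion of $G_\theta$. Concretely, by \eqref{eq:Zmom2},
\begin{align*}
K_t^{(2)}(z_1,z_2) \, K_t^{(2)}(z_2,z_3)
= (2\pi)^2 \!\!\!\!\!\!\!\! \iiiint\limits_{\substack{0<s_1<u_1<t \\ 0<s_2<u_2<t}} \!\!\!\!\!\!\!\!
g_{s_1}(z_2-z_1) \, g_{s_2}(z_3-z_2) \, G_\theta(u_1-s_1) \, G_\theta(u_2-s_2) \, \dd s_1 \dd u_1 \dd s_2 \dd u_2 .
\end{align*}

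The key analytic ingredient is a renewal-type expansion for $G_\theta$. Since $G_\theta$ is the exponentially weighted renewal density of the Dickman subordinator (see the remark after \eqref{g-theta}), whose L\'evy density is $x^{-1}\ind_{(0,1)}(x)\,\dd x$, one can iterate the renewal equation to re-express each factor $G_\theta(u-s)$ as a sum over chains of shorter $G_\theta$-intervals separated by gap weights of the form $1/\text{gap-length}$. The form of this gap weight is dictated by the $x^{-1}$ singularity of the L\'evy density and it matches the 2d heat-kernel return density $2\pi g_\tau(0)=1/\tau$. Substituting this expansion in both $G_\theta$ factors converts the product into a double series over two ``chains'', one carrying the pair $\{z_1,z_2\}$ and the other the pair $\{z_2,z_3\}$.

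Re-indexing the resulting double sum by the total temporal order of \emph{all} interval endpoints $a_1<b_1<a_2<b_2<\cdots$, the two chains interleave and collapse into a single series indexed by the total number $m$ of renewal intervals. The intervals at odd positions come from one chain and those at even positions from the other, which reproduces the right-hand side of \eqref{eq:Kprod2alt}: the factor $g_{a_1}(z_2-z_1)g_{a_2}(z_3-z_2)+g_{a_1}(z_3-z_2)g_{a_2}(z_2-z_1)$ records the two choices of which pair starts the first chain; the factors $G_\theta(b_1-a_1)G_\theta(b_2-a_2)\prod_{i=3}^{m} G_\theta(b_i-a_i)$ come from the $m$ renewal intervals; and the ``skipped-index'' gap factors $\prod_{i=3}^m 1/(a_i-b_{i-2})$ encode that each gap connects two consecutive renewals \emph{within the same chain} (since $i$ and $i-2$ have the same parity), as produced by the L\'evy-density weights of the renewal expansion.

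The main obstacle is making the renewal expansion of $G_\theta$ precise and carrying out the combinatorial bookkeeping so that the gap weights in the final series have the specific ``skipped-index'' form $(a_i-b_{i-2})^{-1}$ rather than, say, $(a_i-b_{i-1})^{-1}$. This relies on the detailed Dickman-subordinator structure developed in \cite{CSZ19a,CSZ19b}, coupled with the 2d identity $2\pi g_\tau(0)=1/\tau$ that is used repeatedly in the companion Proposition~\ref{th:comutation-gm} and which translates L\'evy-density factors into heat-kernel return probabilities at the origin.
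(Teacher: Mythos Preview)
Your conceptual picture is correct and matches the paper's core idea: the product $K_t^{(2)}(z_1,z_2)\,K_t^{(2)}(z_2,z_3)$ is to be viewed as two independent renewal chains, and the key step is to interleave them into a single temporally ordered sequence, from which the skipped-index gap structure $(a_i-b_{i-2})^{-1}$ emerges because consecutive renewals \emph{of the same chain} sit two steps apart in the merged order.

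However, the paper does \emph{not} implement this directly in the continuum via a renewal expansion of $G_\theta$ as you propose. Instead it goes through the lattice approximation: it writes $K^{(2)}$ as the $N\to\infty$ limit of a discrete quantity built from the series $U_N(n)=\sum_{k\ge1}(\sigma_N^2)^k\sum_{0=n_0<\cdots<n_k=n}\prod_i q_{2(n_i-n_{i-1})}(0)$, expands the product as a double sum over two discrete sequences $(n_i)$ and $(n'_j)$, interleaves them into alternating blocks marked by their endpoints $(a_1,b_1,a_2,b_2,\ldots)$, and only then passes to the limit using the local limit theorem and $U_N(n)\sim\frac{\log N}{N}G_\theta(n/N)$. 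A nontrivial technical point that the discretization makes tractable is showing that configurations where the two sequences share a common time point contribute negligibly; this is what justifies the clean interleaving and has no obvious continuum analogue.

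Your proposal invokes an ``iterated renewal expansion'' of $G_\theta$ directly in the continuum, but you never write down what this expansion is. The renewal identity for $G_\theta$ from \cite{CSZ19a} splits at a \emph{fixed} intermediate time $\bar t$, namely $G_\theta(t{-}s)=\int_s^{\bar t}\!\int_{\bar t}^t G_\theta(u{-}s)\,(v{-}u)^{-1}\,G_\theta(t{-}v)\,\dd u\,\dd v$; iterating this does not by itself yield an unconstrained sum over chains of free length with free endpoints, which is what you would need before interleaving. You yourself flag this as ``the main obstacle'' without resolving it, so what you have written is a strategy rather than a proof. The paper's lattice route is precisely the device that turns this strategy into an argument.
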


When we average $K_t^{(2)}(z_1, z_2) \, K_t^{(2)}(z_2, z_3)$ with respect to a function
$\varphi$ as in \eqref{eq:Kphi},
recalling the quantity $\widetilde \scrG_{a_1, a_2}(\varphi)$ from \eqref{eq:Gphi2}, we obtain
the equality
\begin{equation} \label{eq:Kprod2bound}
\begin{split}
	& \int\limits_{(\R^2)^3} \!\!
	\varphi(z_1) \, \varphi(z_2) \, \varphi(z_3) \ K_t^{(2)}(z_1, z_2) \, K_t^{(2)}(z_2, z_3) \ \dd \vec{z}  \\
	& \; =  2 \sum_{m=2}^\infty
	\ \  \idotsint\limits_{0 < a_1 < b_1 < \ldots < a_m < b_m < t} \!\!\!\!\!\!\!\!\!\!\!\!\!\!
	\dd \vec{a} \ \dd\vec{b} \ \,
	\widetilde\scrG_{a_1, a_2}(\varphi) \
	G_\theta(b_1-a_1) \, G_\theta(b_2-a_2)
	\, \prod_{i=3}^m \frac{G_\theta(b_i-a_i)}{a_i-b_{i-2}} \,.
\end{split}
\end{equation}
Note that this expression resembles $\widetilde I_t^{(3)}(\varphi)$ in \eqref{eq:tildeI3},
except that $3 \cdot 2^{m-1}$
is replaced by~$2$.

\smallskip

We next consider the product of \emph{three} covariance kernels
as in \eqref{eq:GMC?}. The following result is also proved below.

\begin{lemma}[Triple correlation product] \label{th:prod3}
The following equality holds:
\begin{equation} \label{eq:Kprod3alt1}
	K_t^{(2)}(z_1, z_2) \, K_t^{(2)}(z_2, z_3)\, K_t^{(2)}(z_1, z_3)
	\,=\, \sum_{\substack{\alpha, \beta, \gamma \in \{12, \, 23, \,13\} \\
	\alpha \ne \beta, \, \beta \ne \gamma, \, \alpha \ne \gamma}} \cI(\alpha, \beta, \gamma)\,,
\end{equation}
where we set
\begin{equation} \label{eq:Kprod3alt2}
\begin{split}
	\cI(12,\, 23,\, 13)
	\, := \, (2\pi)^3 \sum_{m=3}^\infty \ \sum_{\ell = 3}^m
	\quad \idotsint\limits_{0 < a_1 < b_1 < \ldots < a_m < b_m < t} \!\!\!\!\!\!\!\!
	\dd \vec{a} \ \dd\vec{b} \qquad\qquad\qquad\qquad\qquad & \\
	g_{a_1}(z_1-z_2) \, g_{a_2}(z_2-z_3) \, G_\theta(b_1-a_1) \, G_\theta(b_2-a_2) \,
	\prod_{i=3}^{\ell-1} \frac{G_\theta(b_i-a_i)}{a_i-b_{i-2}}  & \\
	\cdot \, g_{a_\ell}(z_1 - z_3) \, G_\theta(b_\ell - a_\ell) \,
	\sum_{\substack{\sigma_{\ell+1}, \ldots, \sigma_m \in \{12, \, 23, \, 13\}\\
	\sigma_{\ell+1} \ne 13, \
	\sigma_i \ne \sigma_{i-1} \, \forall i }} \
	\prod_{i=\ell+1}^m \frac{G_\theta(b_i-a_i)}{a_i-b_{\mathrm{prev}(i)}}  & \,,
\end{split}
\end{equation}
see Figure~\ref{fig:GMC-3} for a graphical representation, where we define
\begin{equation} \label{eq:prev}
	\mathrm{prev}(i) := \max\{j \in \{1, \ldots, i-2\}: \, \sigma_j = \sigma_i\} \,,
\end{equation}
and we set
$\sigma_j = 12$ for odd $ j \le \ell-1$, $\sigma_j = 23$ for even $ j \le \ell-1$, and
$\sigma_\ell := 13$.
\end{lemma}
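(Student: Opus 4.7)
The strategy is to adapt the argument behind Lemma~\ref{th:prod2} from two ``colors'' (pair labels) to three. Starting from the product $K_t^{(2)}(z_1,z_2)\, K_t^{(2)}(z_2,z_3)\, K_t^{(2)}(z_1,z_3)$, I would insert the expansion \eqref{eq:Kprod2alt} for the first two factors and multiply by the basic integral representation of $K_t^{(2)}(z_1,z_3)$. Sorting all of the resulting time intervals into a single increasing list $0 < a_1 < b_1 < \ldots < a_m < b_m < t$ and tagging each one with a label $\sigma_i \in \{12, 23, 13\}$ indicating the originating kernel, the same identity that drives the proof of Lemma~\ref{th:prod2} absorbs any two consecutive intervals carrying the same label into a single interval. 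This forces the alternation constraint $\sigma_i \ne \sigma_{i-1}$ and produces a factor $(a_i - b_{\mathrm{prev}(i)})^{-1}$ at each return to a previously active color.

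I would then partition the resulting sum according to the order in which the three colors first appear. This yields the $3! = 6$ summands $\cI(\alpha, \beta, \gamma)$ in \eqref{eq:Kprod3alt1}, each corresponding to a permutation of $\{12, 23, 13\}$. For the representative term $\cI(12, 23, 13)$, label $12$ first appears at index $1$, label $23$ at index $2$, and label $13$ at some index $\ell \ge 3$; the alternation constraint then forces $\sigma_j = 12$ for odd $j \le \ell - 1$ and $\sigma_j = 23$ for even $j \le \ell - 1$, while for $j > \ell$ the sequence may cycle freely through all three colors subject only to $\sigma_j \ne \sigma_{j-1}$ (in particular $\sigma_{\ell+1} \ne 13$), matching precisely the inner sum of \eqref{eq:Kprod3alt2}.

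The spatial integrations are finally performed via Chapman-Kolmogorov, exactly as in Lemma~\ref{th:prod2}. Each first appearance of a color contributes a heat kernel $g_{a_i}$ between its two associated spatial points, yielding the prefactors $g_{a_1}(z_1-z_2)$, $g_{a_2}(z_2-z_3)$, $g_{a_\ell}(z_1-z_3)$; meanwhile each return to a previously active color contributes $g_{a_i-b_{\mathrm{prev}(i)}}(0) = (2\pi(a_i-b_{\mathrm{prev}(i)}))^{-1}$. Collecting the residual factors of $2\pi$ produces the overall coefficient $(2\pi)^3$, and the $G_\theta(b_i-a_i)$ factors are inherited from the original kernel representations.

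The main delicacy, I expect, lies in the spatial decoupling at index $\ell$, where the third color joins a sequence that has so far been supported by only two colors. The point is that the three factors $K_t^{(2)}$ involve Gaussian kernels over \emph{independent} spatial variables, so the insertion of the new pair at time $a_\ell$ simply adjoins the kernel $g_{a_\ell}(z_1 - z_3)$ without interfering with the preexisting spatial convolutions among $z_1, z_2, z_3$ from the $12$/$23$ alternation. Once this decoupling is verified, the rest of the computation is a direct extension of the two-color bookkeeping already carried out in the proof of Lemma~\ref{th:prod2}, and summing over the six orderings $(\alpha, \beta, \gamma)$ completes the identity \eqref{eq:Kprod3alt1}.
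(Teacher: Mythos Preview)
Your high-level plan—interlace labeled intervals, enforce $\sigma_i \ne \sigma_{i-1}$, and partition by the first-appearance index $\ell$ of the third label—matches the paper's sketch. But the specific route you propose has a gap.

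When you plug \eqref{eq:Kprod2alt} into the first two factors you obtain $m'$ \emph{blocks} alternating between labels $12$ and $23$; the ``basic integral representation'' \eqref{eq:Zmom2} of $K_t^{(2)}(z_1,z_3)$ contributes a \emph{single} interval $(s,u)$ of label $13$. Multiplying and ``sorting into a single increasing list'' therefore yields configurations with exactly \emph{one} $13$-block, whereas \eqref{eq:Kprod3alt2} contains terms with arbitrarily many $13$-blocks (for instance $\sigma = (12,23,13,12,13)$ with $m=5$, $\ell=3$). Worse, $(s,u)$ is integrated independently over $0<s<u<t$ and may straddle several of the $12/23$ blocks, so there is no well-defined sorted list of disjoint intervals to begin with. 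The ``absorbing identity'' you invoke operates at the level of renewal \emph{points}, not blocks: once a factor $K^{(2)}$ has been collapsed into a single $G_\theta$ weight, the internal renewal structure needed to interlace with a third sequence has been summed out and cannot be recovered by sorting.

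The paper's proof therefore returns to the lattice representation \eqref{K2prod-lim} for \emph{all three} factors simultaneously: three independent renewal-point sequences (unprimed, primed, double-primed) are interlaced, maximal same-label runs are grouped into blocks, coincident points are shown to give a vanishing contribution, and one passes to the limit exactly as in Lemma~\ref{th:prod2}. The only structural change is that the transition factor $q_{2(a_i-b_{i-2})}(0)$ becomes $q_{2(a_i-b_{\mathrm{prev}(i)})}(0)$, since with three labels the previous same-label block need not sit at index $i-2$.
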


\begin{figure}
\begin{tikzpicture}[scale=0.6]
\draw  [thick] (-8, 1.5)  circle [radius=0.1]; \draw  [thick] (-8, -1)  circle [radius=0.1];
\draw  [fill] (-1, 0)  circle [radius=0.1]; \draw  [fill] (1, 0)  circle [radius=0.1];
\draw  [fill] (-6,0)  circle [radius=0.1]; \draw  [fill] (-4,0)  circle [radius=0.1];
\draw  [fill] (6.5,0)  circle [radius=0.1];  \draw  [fill] (8,0)  circle [radius=0.1];
\node at (-0.8,-0.5) {\scalebox{0.7}{$a_3$}};  \node at (1.2,-0.5) {\scalebox{0.7}{$b_3$}};
\node at (-5.8,-0.5) {\scalebox{0.7}{$a_1$}};  \node at (-3.8,-0.5) {\scalebox{0.7}{$b_1$}};
\node at (6.7,-0.5) {\scalebox{0.7}{$a_6$}};  \node at (8.2,-0.5) {\scalebox{0.7}{$b_6$}};
\node at (-8,1.9) {\scalebox{0.7}{$z_1$}};  \node at (-8,-0.6) {\scalebox{0.7}{$z_2$}};
\draw [-,thick, decorate, decoration={snake,amplitude=.4mm,segment length=2mm}] (-1,0) -- (1,0);
\draw [-,thick, decorate, decoration={snake,amplitude=.4mm,segment length=2mm}] (-6,0) -- (-4,0);
\draw [-,thick, decorate, decoration={snake,amplitude=.4mm,segment length=2mm}] (6.5,0) -- (8,0);
\draw[dashed] (-8,1.5) to [out=0,in=120] (-6,0);
\draw[double, thick] (-4,0) to [out=50,in=130] (-1,0); \draw[double, thick] (1,0) to [out=40,in=140] (6.5,0);
\draw[dashed] (-8,-1) to [out=10,in=-140] (-6,0);
\draw  [thick] (-8, -3)  circle [radius=0.1];
\draw  [fill] (2, -1.5)  circle [radius=0.1]; \draw  [fill] (3, -1.5)  circle [radius=0.1];
\draw  [fill] (-3.5,-1.5)  circle [radius=0.1]; \draw  [fill] (-1.5,-1.5)  circle [radius=0.1];
\draw  [fill] (9,-1.5)  circle [radius=0.1];  \draw  [fill] (11,-1.5)  circle [radius=0.1];
\node at (2.2,-2) {\scalebox{0.7}{$a_4$}};  \node at (3.2,-2) {\scalebox{0.7}{$b_4$}};
\node at (-3.3,-2) {\scalebox{0.7}{$a_2$}};  \node at (-1.4,-2) {\scalebox{0.7}{$b_2$}};
\node at (9.2,-2) {\scalebox{0.7}{$a_7$}};  \node at (11.2,-2) {\scalebox{0.7}{$b_7$}};
\node at (-8,-2.6) {\scalebox{0.7}{$z_3$}};
\draw [-,thick, decorate, decoration={snake,amplitude=.4mm,segment length=2mm}] (2,-1.5) -- (3,-1.5);
\draw [-,thick, decorate, decoration={snake,amplitude=.4mm,segment length=2mm}] (-3.5,-1.5) -- (-1.5,-1.5);
\draw [-,thick, decorate, decoration={snake,amplitude=.4mm,segment length=2mm}] (9,-1.5) -- (11,-1.5);
\draw[dashed] (-8,-3) to [out=0,in=-140] (-3.5,-1.5); \draw[double, thick] (-1.5,-1.5) to [out=-40,in=-140] (2,-1.5);
\draw[double, thick] (3,-1.5) to [out=-30,in=-150] (9,-1.5); \draw[dashed] (-8,-1) to [out=0,in=160] (-3.5,-1.5);
\draw  [fill] (4.25, -3.5)  circle [radius=0.1]; \draw  [fill] (5.75, -3.5)  circle [radius=0.1];
\draw [-,thick, decorate, decoration={snake,amplitude=.4mm,segment length=2mm}] (4.25,-3.5) -- (5.75,-3.5);
\node at (4.5,-4) {\scalebox{0.7}{$a_5$}};  \node at (6,-4) {\scalebox{0.7}{$b_5$}};
\draw[dashed] (-8,-3) to [out=-20,in=-160] (4.25,-3.5); \draw[dashed] (-8,1.5) to [out=20,in=90] (4.25,-3.5);
\end{tikzpicture}
\caption{Graphical representation of
the term $m=7$ in the series \eqref{eq:Kprod3alt2},
which describes $K_t^{(2)}(z_1, z_2) \, K_t^{(2)}(z_2, z_3)\, K_t^{(2)}(z_1, z_3)$,
see \eqref{eq:Kprod3alt1}.
Pairs of dashed lines from $z_i, z_j$ to an $a$ are assigned {\it total} weight $g_a(z_i - z_j)$;
double solid lines from $b_{i-2}$ to $a_i$ are assigned weight $(a_i - b_{i-2})^{-1}$;
wiggle lines from $a_i$ to $b_i$ are assigned weight $G_\theta(b_i-a_i)$.
Referring to \eqref{eq:Kprod3alt2}, we have $\ell = 5$ and $\mathrm{prev}(6) = 3$,
$\mathrm{prev}(7) = 4$.}
\label{fig:GMC-3}
\end{figure}
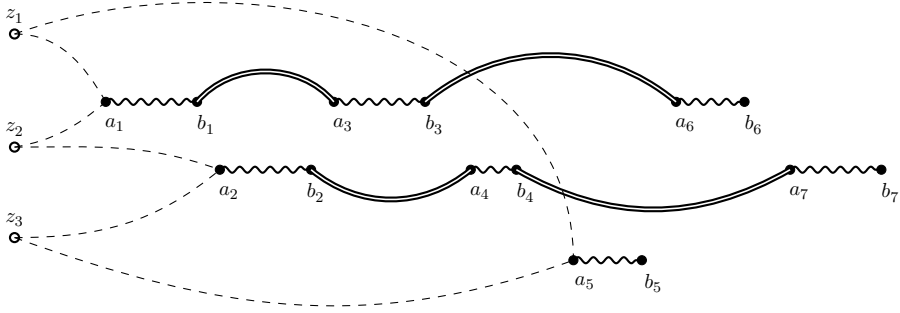

The definition of $\cI(12,\, 23,\, 13)$ in
\eqref{eq:Kprod3alt2} is complicated, but a much
simpler bound will be enough for us: if we shorten the gaps $a_i - b_{\mathrm{prev}(i)}
\ge a_i - b_{i-2}$, see \eqref{eq:prev}, and we bound
\begin{equation*}
	g_{a_\ell}(z_1-z_3) \le g_{a_\ell}(0)
	= \frac{1}{2\pi \, a_\ell} < \frac{1}{2\pi \, (a_\ell -b_{\ell-2})} \,,
\end{equation*}
then we can estimate
\begin{equation*}
\begin{split}
	\cI(12,\, 23,\, 13)
	\, < \, & (2\pi)^2 \sum_{m=3}^\infty \ \sum_{\ell = 3}^m
	\ 2^{m-\ell}
	\quad \idotsint\limits_{0 < a_1 < b_1 < \ldots < a_m < b_m < 1} \!\!\!\!\!\!\!\!
	\dd \vec{a} \ \dd\vec{b} \\
	& \qquad g_{a_1}(z_1-z_2) \, g_{a_2}(z_2-z_3) \, G_\theta(b_1-a_1) \, G_\theta(b_2-a_2) \,
	\prod_{i=3}^{m} \frac{G_\theta(b_i-a_i)}{a_i-b_{i-2}}  \,,
\end{split}
\end{equation*}
where $2^{m-\ell}$ is the number of choices of $\sigma_{\ell+1}, \ldots, \sigma_m$
in \eqref{eq:Kprod3alt2}.
Recalling \eqref{eq:Gphi2}, we obtain
\begin{equation} \label{eq:Kprod3bound}
\begin{split}
	& \int\limits_{(\R^2)^3} \!\!
	\varphi(z_1) \, \varphi(z_2) \, \varphi(z_3) \
	K_t^{(2)}(z_1, z_2) \, K_t^{(2)}(z_2, z_3) \, K_t^{(2)}(z_1, z_3) \ \dd \vec{z}  \\
	& \  < \, 6 \sum_{m=3}^\infty \ \sum_{\ell = 3}^m \, 2^{m-\ell}
	\!\!\!\!\! \idotsint\limits_{0 < a_1 < b_1 < \ldots < a_m < b_m < 1} \!\!\!\!\!\!\!\!\!\!\!\!\!
	\dd \vec{a} \ \dd\vec{b} \ \,
	\widetilde\scrG_{a_1, a_2}(\varphi) \
	G_\theta(b_1-a_1) \, G_\theta(b_2-a_2)
	\prod_{i=3}^m \frac{G_\theta(b_i-a_i)}{a_i-b_{i-2}} \\
	& \ = \, 6 \sum_{m=2}^\infty (2^{m-2}-1)
	\!\!\!\!\!\!\!\!\!\! \idotsint\limits_{0 < a_1 < b_1 < \ldots < a_m < b_m < 1} \!\!\!\!\!\!\!\!\!\!\!\!\!
	\dd \vec{a} \ \dd\vec{b} \ \,
	\widetilde\scrG_{a_1, a_2}(\varphi) \
	G_\theta(b_1-a_1) \, G_\theta(b_2-a_2)
	\prod_{i=3}^m \frac{G_\theta(b_i-a_i)}{a_i-b_{i-2}} \,,
\end{split}
\end{equation}
where in the last line we added the term $m=2$ because the factor $(2^{m-2}-1)$ vanishes.

\smallskip

We finally plug \eqref{eq:Kprod3bound} and (three times) \eqref{eq:Kprod2bound} into \eqref{eq:GMC?}.
Since $6(2^{m-2}-1)+3\cdot 2 = 3 \cdot 2^{m-1}$, we obtain
$K_\GMC^{(3)}(\varphi) < \widetilde I^{(3)}(\varphi)$, see \eqref{eq:tildeI3}.
This completes the proof of Proposition~\ref{th:2ub}.

\smallskip

\begin{proof}[Proof of Lemma~\ref{th:prod2}] Our basic strategy is to approximate $K^{(2)}$
by its lattice analogue.
Figure \ref{fig:GMC-2} provides a useful reference to the underlying structure that we will explain.
In \cite{CSZ19a}, Theorem 1.4, we arrived at the Dickman renewal density $G_\theta$ as the limit
\begin{align}\label{UN-lim}
U_N(n)=\frac{\log N}{N}G_\theta(\tfrac{n}{N}) (1+o(1)),\qquad \text{as $N\to\infty$},
\end{align}
where for $n\in \N$,
\begin{align}\label{def:UN}
  U_N(n)
  := \ind_{\{n=0\}} + \sum_{k\geq 1} \, (\sigma_N^2)^{k} \!\!\!\! \sum_{0=n_0<n_1<\cdots <n_{k}=n}  \,\,
   \prod_{i=1}^{k} q_{2(n_{i}-n_{i-1})}(0)
\end{align}
with $\sigma_N^2:=\tfrac{1}{R_N}(1+\tfrac{\theta+o(1)}{\log N})$ as in \eqref{intro:sigma}
and $q_n(0)$ denoting the $n$-step transition probability from $0$ to $0$ for a simple symmetric
random walk on $\Z^2$. Moreover, the following uniform bound was established in \cite[Theorem 1.4]{CSZ19a}:
\begin{align}\label{uniformUN}
 U_N(n) \leq C \frac{\log N}{N}G_\theta(\tfrac{n}{N})\qquad \forall \,\,0<n\leq N,
\end{align}
for $C\in(0,\infty)$.
It will also be useful to recall the following asymptotic estimates for $G_\theta$ from \cite[Proposition 1.6]{CSZ19a}:
\begin{equation}\label{Gtheta_asympt}
\begin{aligned}
G_\theta(t)&= \frac{1}{t(\log\frac{1}{t})^2}\Bigg\{ 1+\frac{2\theta+o(1)}{\log\frac{1}{t}} \Bigg\}, \qquad \text{as} \,\, t\to0 \,\,\, \text{and} \\
G_\theta(t)&\leq \frac{C}{t(\log\frac{1}{t})^2},   \hskip 3.9cm \text{for} \,\, t\in [0,1].
\end{aligned}
\end{equation}

 Using the local limit theorem for random walks, the asymptotic \eqref{UN-lim}, and the bound \eqref{uniformUN} which allows us to apply dominated convergence, we have that (recall $\ev{\cdot}$ from \eqref{eq:rescZmeas})
\begin{align*}
	&K^{(2)}(z_1,z_2) \\
	&=\lim_{N\to\infty} \, \sigma_N^2 \sum_{1\leq m_1 <m_2\leq N} q_{2m_1}(\ev{(z_1-z_2)\sqrt{N}}) \, U_N(m_2-m_1)\\
	&=\lim_{N\to\infty} \, \sum_{k\geq 1} \sum_{1\leq m_1<m_2\leq N} (\sigma_N^2)^{k+1}
	 \sum_{m_1=n_0<n_1<\cdots <n_{k}=m_2}
	 q_{2m_1}(\ev{(z_1-z_2)\sqrt{N}})  \prod_{i=1}^{k} q_{2(n_{i}-n_{i-1})}(0) \\
	 &=\lim_{N\to\infty} \, \sum_{k\geq 1} (\sigma_N^2)^{k+1}
	 \sum_{0<n_0<n_1<\cdots <n_{k}\leq N}
	 q_{2n_0}(\ev{(z_1-z_2)\sqrt{N}})  \prod_{i=1}^{k} q_{2(n_{i}-n_{i-1})}(0).
\end{align*}
To lighten the notation below, we will drop the brackets $\ev{\cdot}$, i.e., when we write $z\sqrt{N}$ we refer to $\ev{z\sqrt{N}}$.
Using this expression for the product $K^{(2)}(z_1,z_2) K^{(2)}(z_2,z_3)$ we obtain that
\begin{align}\label{K2prod-lim}
        &	K^{(2)}(z_1,z_2) K^{(2)}(z_2,z_3) \notag \\
	&=\lim_{N\to\infty}  \sum_{k, k'\geq 1} (\sigma_N^2)^{k+k'+2}  \!\!\!\!
	 \sumtwo{0<n_0<n_1<\cdots <n_{k}\leq N} {0<n'_0<n'_1<\cdots <n'_{k'}\leq N}  \!\!\!\!
	 q_{2n_0}\big((z_1-z_2) \sqrt{N}\big)\,  q_{2n'_0}\big((z_2-z_3)\sqrt{N}\big) \cdot \notag \\
	 &\hskip 6cm\cdot \prod_{i=1}^{k} q_{2(n_{i}-n_{i-1})}(0) \, \prod_{i=1}^{k'} q_{2(n'_{i}-n'_{i-1})}(0)
\end{align}
Let us start by assuming that
 the sequences $\{0<n_0<n_1<\cdots <n_{k}\leq N\}$
and $\{0<n'_0<n'_1<\cdots <n'_{k'}\leq N\}$ do not share common points and
let us look at all possible ways they interlace, i.e.
\begin{align}\label{blocks}
0<n_0<\cdots<n_{\tau_1}<n'_0<\cdots < n'_{\tau'_1}< n_{\tau_1+1}<
\cdots <n_{\tau_2} < n'_{\tau'_1+1}< \cdots  < n'_{\tau'_2} \cdots
\end{align}
 for integers $\tau_1,\tau_2,\ldots\in \{1,\ldots,k\}$ and
$\tau'_1,\tau'_2,\ldots\in \{1,\ldots,k'\}$. The case $n_0'<n_0$ is similar.
We can now group together the blocks of primed or un-primed integers and sum over the possible
cardinalities of the blocks as well as the values of their elements after fixing first the vector
$(a_1,b_1,a_2,b_2,\ldots)=(n_0, n_{\tau_1}, n'_0, n'_{\tau'_1},\ldots)$, which marks the boundaries of the blocks.
Afterwards, we sum over all possible values of $(a_1,b_1,a_2,b_2,\ldots)$.
Using this decomposition in expression \eqref{K2prod-lim} we can then see that
\begin{align}\label{K1K2}
        &	K^{(2)}(z_1,z_2) K^{(2)}(z_2,z_3)
	 =\lim_{N\to\infty}   \sum_{m=2}^\infty
	\quad \sum_{0 < a_1 < b_1 < \ldots < a_m < b_m < N} \!\!\!\!\!\!\!\!  \notag \\
	& \quad \Big\{ q_{2a_1}\big((z_1-z_2)\sqrt{N}\big) \, q_{2a_2}\big((z_2-z_3)\sqrt{N}\big)
	\,+ \, q_{2a_1}\big((z_2-z_3)\sqrt{N}\big) \, q_{2a_2}\big((z_1-z_2)\sqrt{N}\big) \Big\} \\
	& \quad \qquad \cdot \sigma_N^2\,  U_N(b_1-a_1) \, \cdot \sigma_N^2 U_N(b_2-a_2)  \cdot
	\prod_{i=3}^m \sigma_N^2 \,U_N(b_i-a_i) \,q_{2(a_i-b_{i-2})}(0) \,.\notag
\end{align}
After passing to the limit using the local limit theorem for random walks and the asymptotic \eqref{def:UN}, we arrive at
expression \eqref{eq:Kprod2alt}.

It only remains to check that the interlacing blocks \eqref{blocks} are well defined, i.e.
contribution to \eqref{K2prod-lim} from sequences $\{0<n_0<n_1<\cdots <n_{k}\leq N\}$
and $\{0<n'_0<n'_1<\cdots <n'_{k'}\leq N\}$ that share common points is negligible due to the loss of some degrees of freedom.
So let us look at \eqref{K2prod-lim} when the sum on the right hand side is over configurations such that
\begin{align*}
\{0<n_0<n_1<\cdots <n_{k}\leq N\}\, \bigcap \, \{0<n'_0<n'_1<\cdots <n'_{k'}\leq N\} \neq \emptyset.
\end{align*}
By summing over $1\leq \mathfrak{n}\leq N$ where a coincidence between some $n_\ell$ and $n'_{\ell'}$ can occur, the right hand side of \eqref{K2prod-lim} can be bounded by
\begin{align*}
	&\sum_{k, k'\geq 1} (\sigma_N^2)^{k+k'+2}  \sum_{1\leq \mathfrak{n}\leq N}
	 \sumtwo{0<n_0<n_1<\cdots <n_{k}\leq N} {0<n'_0<n'_1<\cdots <n'_{k'}\leq N}   \ind_{\mathfrak{n}\in \{n_1, \ldots, n_k\}\cap\{n'_1, \ldots, n'_{k'}\}}  \\
	 &\qquad \cdot q_{2n_0}\big((z_1-z_2) \sqrt{N}\big)\,  q_{2n'_0}\big((z_2-z_3)\sqrt{N}\big)
	  \prod_{i=1}^{k} q_{2(n_{i}-n_{i-1})}(0) \, \prod_{i=1}^{k'} q_{2(n'_{i}-n'_{i-1})}(0),
\end{align*}
Rearranging terms, this can be rewritten as
\begin{equation}\label{forrevision}
\begin{split}
&\sigma_N^4 \, \sum_{1\leq \mathfrak{n}\leq N} \sum_{1\leq n_0,n'_0 \leq \mathfrak{n}} q_{2n_0}\big((z_1-z_2) \sqrt{N}\big)\,  q_{2n'_0}\big((z_2-z_3)\sqrt{N}\big)  \\
&\qquad \cdot U_N(\mathfrak{n}-n_0) U_N(\mathfrak{n}-n'_0) 
\sum_{\mathfrak{n}\leq n,n'\leq N} U_N(n-\mathfrak{n}) U_N(n'-\mathfrak{n}),
\end{split}
\end{equation}
where recall from \eqref{def:UN} that $U_N(0)=1$. First restrict to the case $n_0, n_0', n, n'\neq \mathfrak{n}$. Using \eqref{uniformUN}, this can be bounded by
 \begin{align*}
&C\sigma_N^4 \,\sum_{1\leq n_0,n'_0 \leq N}q_{2n_0}\big((z_1-z_2) \sqrt{N}\big)\,  q_{2n'_0}\big((z_2-z_3)\sqrt{N}\big) \cdot \\
& \cdot\!\!\sum_{n_0\vee n'_0 < \mathfrak{n} \leq N} \frac{\log N}{N} G_\theta\big(\frac{\mathfrak{n}-n_0}{N}\big) \cdot
\frac{\log N}{N} G_\theta\big(\frac{\mathfrak{n}-n'_0}{N}\big)
\sum_{\mathfrak{n}<n,n'\leq N}  \frac{\log N}{N} G_\theta\big(\frac{n-\mathfrak{n}}{N}\big)
 \frac{\log N}{N} G_\theta\big(\frac{n'-\mathfrak{n}}{N}\big).
\end{align*}
We now show that this sum goes to $0$ as $N\to\infty$. Using the local limit theorem, we can approximate the above sum by
 \begin{align*}
&\frac{C\sigma_N^4}{N^2} \,\sum_{1\leq n_0,n'_0 \leq N}g_{\tfrac{2n_0}{N}}\big( z_1-z_2\big)\,
g_{\tfrac{2n'_0}{N}}\big( z_2-z_3 \big) \cdot \\
& \cdot\!\!\sum_{n_0\vee n'_0 < \mathfrak{n} \leq N} \frac{\log N}{N} G_\theta\big(\frac{\mathfrak{n}-n_0}{N}\big) \cdot
\frac{\log N}{N} G_\theta\big(\frac{\mathfrak{n}-n'_0}{N}\big)
\sum_{\mathfrak{n}<n,n'\leq N}  \frac{\log N}{N} G_\theta\big(\frac{n-\mathfrak{n}}{N}\big)
 \frac{\log N}{N} G_\theta\big(\frac{n'-\mathfrak{n}}{N}\big).
\end{align*}
Note that we have five independent summation variables $n_0,n'_0, \mathfrak{n}, n, n'$,
compared to six factors of $N^{-1}$.
Using a Riemann sum approximation and that $\sigma_N^4 =O((\log N)^{-2})$, we can further bound the above sum by
\begin{align*}
&\frac{C (\log N)^2}{N} \int_0^1 \dd t_0 \int_0^1  \dd t'_0 \,\, g_{2t_0}(z_1-z_2) \,  g_{2t'_0}(z_2-z_3) \cdot \\
& \qquad \cdot \int_{t_0\vee t'_0}^1 \dd \mathfrak{t} \,\, G_\theta(\mathfrak{t}-t_0) G_\theta(\mathfrak{t}-t'_0)
\int_{\mathfrak{t}}^1 \dd t \int_{\mathfrak{t}}^1 \dd t' \,\, G_\theta(t-\mathfrak{t}) G_\theta(t'-\mathfrak{t}).
\end{align*}
The asymptotics of $G_\theta$ from \eqref{Gtheta_asympt} show that all integrals involving $G_\theta$ are finite,
and so are the integrals involving the heat kernels for $z_1\neq z_2\neq z_3$.
Thus, the whole quantity vanishes at the speed of $O(\tfrac{(\log N)^2}{N})$ as $N$ tends to infinity.

Finally, we consider the sum in \eqref{forrevision} when $\mathfrak{n}$ coincides with at least one element in $\{n_0, n'_0, n, n'\}$,
in which case a corresponding sum of $U_N$ in \eqref{forrevision} is replaced by $1$, which yields a better bound. We illustrate this in the case
$n_0=n'_0=\mathfrak{n}$; the other cases are similar and will be omitted. The quantity in
\eqref{forrevision} now becomes
\begin{align*}
&\sigma_N^4 \,\sum_{1\leq \mathfrak{n} \leq N}
q_{2\mathfrak{n}}\big((z_1-z_2) \sqrt{N}\big)\,  g_{2\mathfrak{n}}\big((z_2-z_3)\sqrt{N}\big)
\sum_{\mathfrak{n}<n,n'\leq N} U_N(n-\mathfrak{n}) U_N(n'-\mathfrak{n}) \\
&\leq \frac{C\sigma_N^2}{N^2} \sum_{1\leq \mathfrak{n} \leq N}
g_{\tfrac{2\mathfrak{n}}{N}}\big( z_1-z_2 \big)\,  g_{\frac{2\mathfrak{n}}{N}}\big(z_2-z_3 \big)
\sum_{\mathfrak{n}<n,n'\leq N}\frac{\log N}{N} G_\theta\big(\frac{n-\mathfrak{n}}{N}\big)
 \frac{\log N}{N} G_\theta\big(\frac{n'-\mathfrak{n}}{N}\big) \\
&\leq \frac{C }{N} \int_0^1 \dd s  \,\, g_{2s}(z_1-z_2) \,  g_{2s}(z_2-z_3)
\int_s^1 \dd t \int_s^1 \dd t' \,\, G_\theta(t-s) G_\theta(t'-s),
\end{align*}
which is $O(N^{-1})$ as all integrals above are finite by \eqref{Gtheta_asympt} and by the
small time asymptotics of the heat kernels for $z_1\neq z_2 \neq z_3$.
\end{proof}

\smallskip

\begin{proof}[Proof of Lemma~\ref{th:prod3}]
The proof is similar to that of Lemma~\ref{th:prod2}, so we will just give a sketch.

For the product
$K^{(2)}(z_1, z_2) \, K^{(2)}(z_2, z_3)\, K^{(2)}(z_1, z_3)$
we can write a formula analogous to \eqref{K2prod-lim} and \eqref{K1K2},
where we now sum over three type of blocks:
un-primed, primed and double-primed, to each one of which
we assign a label $\sigma_i \in \{12, 23, 13\}$.
Due to the interlacing of the blocks, the assignment of labels will have the
constraint that $\sigma_i \ne \sigma_{i-1}$ for all $i$. Thus, the only difference with the analogous formula
for $K^{(2)}(z_1, z_2) \, K^{(2)}(z_2, z_3)$ would be that $q_{2(a_i-b_{i-2})}(0)$ would be replaced by
$q_{2(a_i-b_{\mathrm{prev}(i)} )}(0)$ where $\mathrm{prev}(i)$ corresponds to the previous block with the same label
$\sigma$.
\end{proof}

\subsection{Proof of Proposition~\ref{th:bound-comp}}
\label{sec:bound-comp}

If $\varphi = g_r$ is the heat kernel, see \eqref{eq:gt},
we can compute $\scrG_{a_1, a_2}(\varphi)$ and $\widetilde\scrG_{a_1, a_2}(\varphi)$,
as in Remark~\ref{rem:Gauss-comp}.
We start from the latter, see \eqref{eq:Gphi2}: integrating $z_3$ by Gaussian convolution, then
$z_2$ by Lemma~\ref{triple-gauss}, and finally $z_1$, we get
\begin{equation*}
\begin{split}
	\widetilde\scrG_{a_1, a_2}(g_r)
	& := (2\pi)^2 \iint \, g_r(z_1) \, g_r(z_2)
	\ g_{a_1}(z_2-z_1) \, g_{r+a_2}(z_2) \ \dd z_1 \, \dd z_2 \\
	& = (2\pi)^2 \, g_{2r+a_2}(0) \int \, g_r(z_1) \,
	g_{a_1 + h(r, r+a_2)}(z_1) \ \dd z_1  \\
	& = (2\pi)^2 \, g_{2r+a_2}(0) \, g_{r + a_1 + \frac{r(r+a_2)}{2r+a_2}}(0)
	= \frac{1}{3 r^2 + 2(a_1+a_2)r + a_1 a_2} \,,
\end{split}
\end{equation*}
which proves the second relation in \eqref{eq:Ggr}.
We can compute $\scrG_{a_1, a_2}(g_r)$ from \eqref{eq:Gphi}
with similar arguments, but it is easier to exploit the following basic fact:
when $z_1, z_2, z_3$ are independent Gaussian
random variables on $\R^2$ with density $g_r$,
then $x := z_1 - z_2$ and $y := z_3 - \frac{z_1+z_2}{2}$ are independent
with densities $g_{2r}$ and $g_{\frac{3}{2}r}$, therefore
\begin{equation*}
\begin{split}
	\scrG_{a_1, a_2}(g_r)
	&= (2\pi)^2 \iint g_{a_1}(x) \, g_{2r}(x) \, g_{a_2}(y) \, g_{\frac{3}{2}r}(y) \, \dd x \, \dd y \\
	&= (2\pi)^2 \, g_{a_1+2r}(0) \, g_{a_2+\frac{3}{2}r}(0)
	= \frac{1}{a_1+2r} \, \frac{1}{a_2 + \frac{3}{2}r} \,,
\end{split}
\end{equation*}
which proves the first relation in \eqref{eq:Ggr}.
The fact that $\scrG_{a_1, a_2}(g_r)  > \widetilde{\scrG}_{a_1, a_2}(g_r)$ then follows.

\smallskip

It remains to prove \eqref{eq:goalGG} when $\varphi(z)
= \ind_{\{|z| < r\}}$ is the indicator function of a ball.
If we define
\begin{equation*}
	\xi(z) := (\varphi * g_{a_2})(z) = \int \varphi(z') \, g_{a_2}(z - z') \, \dd z' \,,
\end{equation*}
then we can write, recalling \eqref{eq:Gphi} and performing a change of variables,
\begin{align*}
	\scrG_{a_1, a_2}(\varphi)
	& := (2\pi)^2
	\iint\limits_{(\R^2)^2} \,
	\varphi(z_1) \, \varphi(z_2) \ g_{a_1}(z_2-z_1) \, \xi(\tfrac{z_1+z_2}{2})
	\ \dd z_1 \, \dd z_2 \,, \\
	& = (2\pi)^2
	\iint\limits_{(\R^2)^2} \, \varphi(z-\tfrac{y}{2}) \, \varphi(z+\tfrac{y}{2})
	\, g_{a_1}(y) \, \xi(z) \, \dd y \, \dd z \,.
\end{align*}
Similarly, by \eqref{eq:Gphi2},
\begin{equation*}
	\widetilde\scrG_{a_1, a_2}(\varphi) := (2\pi)^2
	\iint\limits_{(\R^2)^2} \,
	\varphi(z-y) \, \varphi(z) \ g_{a_1}(y) \, \xi(z)
	\ \dd y \, \dd z \,.
\end{equation*}
Note that $\xi$ is a radially symmetric and strictly increasing function since
the convolution of two radially symmetric and non-increasing functions
(see the proof of Lemma~\ref{th:decre}).
We can apply a layer cake decomposition for $\xi$ as in \eqref{eq:layer-cake}, thus replacing
$\xi(z)$ by $\ind_{\{|z| < t\}}$ with $t$ integrated
w.r.t.\ the measure $\mu^\xi({\rm d}t)$, which has full support on $[0,\infty)$.
Plugging also $\varphi(x) = \ind_{\{|x|<r\}}$, we can write the contribution at each fixed $t>0$ by
\begin{equation*}
\begin{split}
	\scrG^{(t)}_{a_1, a_2}(\varphi) - \widetilde\scrG^{(t)}_{a_1, a_2}(\varphi)
	: = (2\pi)^2 \int\limits_{\R^2} \,
	\Big\{ &\, \mathrm{Leb}\big( B(\tfrac{y}{2},r) \cap B(-\tfrac{y}{2},r)
	\cap B(0,t \big) \\
	& \ - \mathrm{Leb}\big( B(y,r) \cap B(0,r)
	\cap B(0,t \big) \Big\}  \, g_{a_1}(y) \, \dd y \,,
\end{split}
\end{equation*}
where $B(z,r) := \{x\in\R^2 \colon |x| < r\}$ is the ball of radius~$r$ centered at~$z$.
Note that
\begin{equation*}
	A_r(y) := B(\tfrac{y}{2},r) \cap B(-\tfrac{y}{2},r) \,,
\end{equation*}
is a \emph{symmetric convex set} (possibly empty), which translated by $\frac{y}{2}$ gives
\begin{equation*}
	A_r(y) + \tfrac{y}{2} = B(y,r) \cap B(0,r) \,.
\end{equation*}
Then it follows from Anderson's inequality \cite[Theorem~1]{And55}  that we have the bound
\begin{equation*}
\begin{split}
	\mathrm{Leb}\big( A_r(y) \cap B(0,t )\big) \,\ge\,
	\mathrm{Leb}\big( ( A_r(y) + \tfrac{y}{2})  \cap B(0,t) \big) \,,
\end{split}
\end{equation*}
which can also be checked directly, and given $r$, the inequality is strict for a non-empty open set of $t$ and $y$.
Integrating $t$ w.r.t.\ $\mu^\xi$ and $y$ w.r.t.\ $g_{a_1}(y) {\rm d}y$ then gives $\scrG_{a_1, a_2}(\varphi)  > \widetilde\scrG_{a_1, a_2}(\varphi) $
when $\varphi$ is the indicator function of a ball.\qed


\section{Proof of Theorem~\ref{th:mmom}:
lower bounds via collision local times and the Gaussian correlation inequality}
\label{via-gci}

In this section we prove Theorem~\ref{th:mmom}.
The key point is the lower bound \eqref{eq:mombd0} on the moments of the SHF
$\scrZ_t^\theta$: for a suitable $\eta = \eta_{t,\theta} > 0$ we have, for any
$m \in \N$ with $m \ge 3$,
\begin{equation}\label{eq:mombd1}
	\bbE\big[ \big(2\,\scrZ_t^\theta(g_\delta) \big)^m \big]
	\geq (1+\eta) \, \bbE\big[\big(2\, \scrZ_t^\theta(g_\delta)\big)^2 \big]^{m\choose 2}
	\qquad \forall \delta \in (0,1) \,,
\end{equation}
where $g_\delta$ is the heat kernel on $\R^2$, see \eqref{eq:gt}.
Then, in order to obtain \eqref{eq:asystrict} and complete the proof,
it suffices to show that \eqref{eq:factorization} holds,
which follows from the next result.

\begin{proposition}[Higher moments of GMC]\label{P:momasy}
Let $\mathscr{M}_{t}^\theta(\dd x)$ be the GMC with
the same first and second moments as the SHF
$\scrZ_{t}^\theta(\dd x)$, see Section \ref{ss:GMC}. Then, as $\delta \downarrow 0$, we have
\begin{equation}\label{eq:momasy}
	\bbE\big[ \big(2\,\scrM_t^\theta(g_\delta) \big)^m \big]
	\sim \big( C_{t,\theta}\,
	\log \tfrac{1}{\sqrt{\delta}}\big)^{m\choose 2} \,,
\end{equation}
where $C_{t,\theta} = \frac{1}{\pi} \int_0^t G_\theta(v) \, \dd v$ is the
same constant which appears in \eqref{eq:rhot}.
\end{proposition}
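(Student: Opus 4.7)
The plan is to expand the GMC moment formula into a sum over subsets of pairs, rescale to unfold the logarithmic singularity at the origin, and identify the complete graph on $\{1,\ldots,m\}$ as the dominant contribution as $\delta \downarrow 0$.

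Starting from \eqref{eq:Mmom3} with the choice \eqref{eq:wefix}, i.e.\ $\mu(\dd x) = \tfrac{1}{2}\dd x$ and $\rme^{k_t(x,y)} = 1 + K_t^{(2)}(x,y)$, the factors of $2$ cancel and one obtains
\begin{equation*}
\bbE\big[\big(2\scrM_t^\theta(g_\delta)\big)^m\big] = \int_{(\R^2)^m} \prod_{i=1}^m g_\delta(z_i) \prod_{1\le i<j\le m}\big(1 + K_t^{(2)}(z_i,z_j)\big) \dd \vec z\,.
\end{equation*}
Expanding the pairwise product gives $\sum_{E\subseteq \binom{[m]}{2}} I_E(\delta)$, where $I_E(\delta) := \int \prod_i g_\delta(z_i) \prod_{\{i,j\}\in E} K_t^{(2)}(z_i,z_j)\, \dd\vec z$. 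Because $K_t^{(2)}(z,z')$ depends only on $z-z'$, the rescaling $z_i = \sqrt{\delta}\, w_i$ turns this into
\begin{equation*}
I_E(\delta) = \bbE\Big[\prod_{\{i,j\}\in E} K_t^{(2)}\bigl(\sqrt{\delta}(W_i - W_j)\bigr)\Big]\,,
\end{equation*}
with $W_1, \ldots, W_m$ i.i.d.\ standard Gaussian on $\R^2$.

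Next, the asymptotic \eqref{eq:rhot}---valid for $K_t^{(2)}$ itself, since comparison of \eqref{eq:second} with \eqref{eq:Zmom1}-\eqref{eq:Zmom2} yields $\scrK_{t,\theta}^{(2)} = 1 + K_t^{(2)}$---gives pointwise
\begin{equation*}
\frac{K_t^{(2)}\bigl(\sqrt{\delta}(w_i-w_j)\bigr)}{\log(1/\sqrt{\delta})} \xrightarrow[\delta\downarrow 0]{} C_{t,\theta} \qquad\text{for } w_i\ne w_j\,.
\end{equation*}
A brief inspection of the explicit formula \eqref{eq:Zmom2} yields a global bound $K_t^{(2)}(v) \le C(1 + \log_+(1/|v|))$ (using continuity away from $0$, boundedness at infinity coming from the Gaussian factor $g_s(v)$, and integrability of $G_\theta$), and this provides the domination needed to apply dominated convergence, since $\prod_{\{i,j\}\in E}(1 + |\log|W_i - W_j||)$ is integrable against i.i.d.\ Gaussians. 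We conclude $I_E(\delta) \sim (C_{t,\theta}\log(1/\sqrt{\delta}))^{|E|}$.

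Finally, summing over $E$, the unique maximal subset $E = \binom{[m]}{2}$ yields the leading term $(C_{t,\theta}\log(1/\sqrt{\delta}))^{\binom{m}{2}}$, while all other $E$ contribute $O((\log(1/\sqrt{\delta}))^{\binom{m}{2}-1})$, strictly lower order. This proves \eqref{eq:momasy}. The main technical hurdle is establishing the global upper bound on $K_t^{(2)}$: the local asymptotic \eqref{eq:rhot} controls only $|v|\downarrow 0$, so one must verify directly from \eqref{eq:Zmom2} that no pathological behaviour arises for $|v|$ bounded away from $0$, in particular for large $|v|$, where the Gaussian decay of $g_s(v)$ comfortably provides boundedness.
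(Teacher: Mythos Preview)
Your proof is correct and follows essentially the same approach as the paper: apply the GMC moment formula \eqref{eq:Mmom3}, rescale $z_i = \sqrt{\delta}\,w_i$, invoke the logarithmic asymptotic \eqref{eq:rhot} for the two-point kernel, and identify the complete-graph term as dominant. The only structural difference is the order of operations: the paper first replaces $\rme^{k_t(x_i,x_j)}$ by its asymptotic $C_{t,\theta}\log(1/|x_i-x_j|)$ and expands $\prod(\log(1/\sqrt{\delta}) + \log(1/|y_i-y_j|))$ after rescaling, whereas you expand $\prod(1+K_t^{(2)})$ into $\sum_E I_E$ first and handle each graph $E$ separately via dominated convergence; your route makes the justification of the $(1+o(1))$ step more explicit.
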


The rest of this section is divided in three parts.
\begin{itemize}
\item First we show that the moments of the SHF $\scrZ_t^\theta(\dd x)$,
averaged over a test function $\varphi$, can be obtained as the limit
(as $\epsilon \downarrow 0$) of the moments
of the solution $u^\epsilon(t,x)$ of the mollified Stochastic Heat Equation
\eqref{eq:SHE0}, based on \cite{GQT21}.

\item Then we prove the bound \eqref{eq:mombd1} by exploiting
the \emph{Gaussian Correlation Inequality} \cite{R14, LM17},
adapting the approach in Feng's Ph.D. thesis \cite{Feng}.

\item Finally, we prove Proposition~\ref{P:momasy}, which completes the proof of
Theorem~\ref{th:mmom}.
\end{itemize}

\subsection{SHF and the mollified Stochastic Heat Equation}
\label{sec:SHF-SHE}
We consider the mollified Stochastic Heat Equation \eqref{eq:SHE0} with spatially mollified
space-time white noise
\begin{equation*}
	\xi^\eps(t,x) := (\xi(t,\cdot) * j_\eps)(x) = \int_{\R^2} j_\eps(z) \, \xi(t,x-z) \, \dd z \,,
\end{equation*}
where $j_\eps(x):=\epsilon^{-2} j(x/\epsilon)$
and $j(\cdot)$ is a probability density on $\R^2$
(usually taken compactly supported).
Assuming initial condition $u^\eps(0,\cdot)=1$,
by the Feynman-Kac formula~\cite[Section~3 and eq. (3.22)]{BC95}, the
It\^o solution $u^\eps(t,x) = u^\eps_\beta(t,x)$
of \eqref{eq:SHE0}, where we highlight the dependence on~$\beta$,
has the representation
\begin{equation}\label{eq:SHE2}
	u^\eps_\beta(t,x)
	= \bE_x\Big[\rme^{\beta \int_0^t \xi^\eps(t-u, B_u)\,\dd u  \,- \,\frac{1}{2}\beta^2
	\Vert j_\eps\Vert_2^2 t}  \Big]
	\stackrel{\rm dist}{=} \bE_x\Big[\rme^{\beta \int_0^t \xi^\eps(u, B_u)\,\dd u
	\, - \, \frac{1}{2}\beta^2
	\Vert j_\eps\Vert_2^2 t}  \Big],
\end{equation}
where $\bE_x$ denotes expectation
for a standard Brownian motion $B$ starting at $x$.
We will omit $x$ from $\bE_x$ if $x=0$.

We can directly compute the moments of $u^\eps_\beta(t,x)$, which do not depend on~$x$
by translation invariance,
thanks to the initial condition $u(0,\cdot)\equiv 1$. Given $m\in\N$, let
$(B^{(i)})_{1\leq i\leq m}$ denote $m$ independent Brownian motions,
and define $J_\epsilon := \epsilon^{-2} J(x/\epsilon)$ with
$J := j * j$. Note that
\begin{equation} \label{eq:L}
\begin{split}
	\bbvar \bigg[ \sum_{i=1}^m \int_0^t  \xi^\eps(u, B^{(i)}_u)\,\dd u \bigg]
	= \sum_{1 \le i,j \le m} L^{i,j}_{\epsilon,t} \,, \quad
	\ \ \text{where} \quad
	L^{i,j}_{\epsilon,t} :=
	\int_0^t   J_\epsilon(B^{(i)}_u-B^{(j)}_u) \,\dd u\, ,
\end{split}
\end{equation}
which can be viewed as a
\emph{collision local time} at scale~$\eps$ between $B^{(i)}$ and $B^{(j)}$.
Note that $L^{i,i}_{\eps,t} = J_\eps(0) \, t = \|j_\eps\|_2^2 \, t$,
where $\|\cdot\|_2$ denotes the $L^2$ norm.
Given $x_1, \ldots, x_m \in \R^2$, if we denote by $\bP_{\vec{x}}$ the law under which
$B^{(i)}$ starts at $B^{(i)}_0 = x_i$, a Gaussian computation yields
\begin{equation}\label{eq:uepsk}
	\bbE\Bigg[\prod_{i=1}^m u^{\eps}_\beta(t,x_i) \Bigg]
	= \bbE\bE_{\vec{x}}\Big[ \rme^{\beta \sum_{i=1}^m \int_0^t
	\xi^\eps(u, B^{(i)}_u)\,\dd u \, - \,
	\frac{m}{2}\beta^2 \Vert j_{\eps}\Vert_2^2 t} \Big]
	= \bE_{\vec{x}}\Bigg[\prod_{1\leq i<j\leq m}
	\rme^{\beta^2 \int_0^t L^{i,j}_{\epsilon,t}} \Bigg] .
\end{equation}

\begin{remark}
In the critical window \eqref{beta-eps}
we have $\beta_\eps^2 \sim 2\pi/\log\epsilon^{-1}$, hence
$\beta_\eps^2 L^{i,j}_{\eps, t}$ for $i\ne j$
converges in law as $\eps \downarrow 0$
to an exponential random variable $Y$
of mean~$1$, by a classical result \cite{KR53}. This explains why $\beta_\eps$ is critical,
since $\E[\rme^{\lambda Y}]$ diverges precisely at $\lambda=1$.
\end{remark}

We now describe the link between the solution $u^\epsilon_\beta(t,x)$
of the mollified Stochastic Heat
Equation and the SHF $\scrZ_t^\theta(\dd x)$.
We recall that the latter was obtained in \cite{CSZ23}
from the directed
polymer random measure $\cZ^{\beta}_{N; t}(\dd x)
= \cZ^{\beta}_{N;\, 0,t}(\dd x, \R^2)$,
see \eqref{eq:rescZmeas},
based on the \emph{simple random walk} $(S_n)$ on $\Z^2$,
which has covariance matrix
$\sfs I$ with $\sfs=\frac{1}{2}$ and is periodic
(note that $S_{2n}$ takes values in $\Z^2_\even$, see \eqref{eq:even}).
On the other hand, the solution $u^\epsilon_\beta(t,x)$
of the mollified Stochastic Heat Equation, see \eqref{eq:SHE2},
is based on a standard Brownian motion on $\R^2$
with covariance matrix $I$ and, of course, with no periodicity issues.

For these reasons, to obtain the SHF $\scrZ_t^\theta(\dd x)$
from the solution $u^\epsilon_\beta(t,x)$ of the mollified
Stochastic Heat Equation, we need an \emph{appropriate rescaling}:
given $\theta \in \R$, if we scale $\beta_\eps = \beta_\eps(\theta)$ in the
critical window \eqref{eq:betaepssimple}-\eqref{eq:cReps}
(see also \eqref{eq:cReps-as}-\eqref{beta-eps2}),
we expect that
\begin{equation} \label{eq:conj}
	\frac{1}{2} \, u^\epsilon_{\beta_\eps}
	\big( t, x\sqrt{2} \big) \, \dd x \
	\xrightarrow[]{\ \ d \ \ } \ \scrZ_t^\theta(\dd x) \,,
\end{equation}
see Appendix~\ref{sec:comparison} for a heuristic derivation.
We refrain from proving such a convergence, which we expect to follow
from the same techniques as in the paper \cite{CSZ23}.
As a matter of fact, for our goals, it is enough to
show that the two sides of \eqref{eq:conj} have asymptotically
\emph{the same moments},
and this follows by \cite{GQT21} and \cite{CSZ23}, as we now describe.

\begin{proposition}[Moments of SHF and Stochastic Heat Equation]\label{th:SHF-SHE}
Fix $\theta \in \R$
and set $\beta = \beta_\eps$ as in \eqref{beta-eps2}.
Fix a mollification density $j(\cdot)$ which is
radially symmetric and non-increasing.
For any integrable $\varphi: \R^2 \to \R$, and for any $h\in\N$, we have
\begin{equation} \label{eq:moments-SHF-SHE}
\begin{split}
	\bbE\big[ \scrZ_t^\theta(\varphi)^h \big]
	& \,=\, \frac{1}{2^h} \, \lim_{\eps\downarrow 0} \,
	\bbE\bigg[ \bigg(\int_{\R^2} u^\eps_{\beta_\eps}\big(t,x\sqrt{2}\big)
	\, \varphi(x) \, \dd x\bigg)^h \bigg] \,.
\end{split}
\end{equation}
\end{proposition}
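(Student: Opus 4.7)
The plan is to express both sides of \eqref{eq:moments-SHF-SHE} as integrals against a common moment kernel: the left-hand side is already in this form via Theorem~\ref{th:moments}, with kernel $\scrK_t^{(h)}$ given by \eqref{eq:Zmomh-kernel}. The task is then to show that the right-hand side converges to exactly the same kernel once the $\sqrt{2}$ spatial rescaling and the factor $1/2^h$ are absorbed. The limit itself, in a slightly different parametrisation, has been established in \cite{GQT21}; the novelty here is only the bookkeeping that identifies their formula with \eqref{eq:Zmomh-kernel}.

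First, by \eqref{eq:uepsk} and Fubini,
\begin{equation*}
\bbE\bigg[\bigg(\int u^\eps_{\beta_\eps}(t,x\sqrt{2})\,\varphi(x)\,\dd x\bigg)^{\!h}\,\bigg]
=\int_{(\R^2)^h}\prod_{i=1}^h\varphi(x_i)\ \bE_{\sqrt{2}\,\vec x}\bigg[\prod_{1\le i<j\le h}\!\!\rme^{\beta_\eps^2 L^{i,j}_{\eps,t}}\bigg]\,\dd\vec x .
\end{equation*}
I would next substitute $y_i=\sqrt{2}\,x_i$ (Jacobian $2^{-h}$, which produces the prefactor $1/2^h$) and rescale each Brownian motion by $1/\sqrt{2}$. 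The rescaled process has covariance $\tfrac12 I$, so its transition density is $g_{t/2}$ as in \eqref{eq:sfQ}-\eqref{eq:sfG}, matching the convention under which the SHF was constructed in \cite{CSZ21} from the simple random walk on $\Z^2$.

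The second step is to expand $\prod_{i<j}\rme^{\beta_\eps^2 L^{i,j}_{\eps,t}}$ in a time-ordered series, grouping consecutive infinitesimal collision contributions between the same pair $\{i,j\}$ into blocks on time intervals $[a_\ell,b_\ell]$. By the continuum counterpart of the Dickman convergence \eqref{UN-lim} --- precisely, the convergence of the exponentially weighted collision local time of the two-dimensional Brownian difference $B^{(i_\ell)}-B^{(j_\ell)}$ towards $G_\theta$ --- each block contributes the replica-evolution kernel $\sfG_{\theta,b_\ell-a_\ell}^{\{i_\ell,j_\ell\}}$, while the free evolution of all $h$ Brownian motions between blocks contributes $\sfQ_{a_\ell-b_{\ell-1}}^{\{i_\ell,j_\ell\}}$. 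The block decomposition automatically enforces $\{i_\ell,j_\ell\}\ne\{i_{\ell-1},j_{\ell-1}\}$. Summing over block patterns reproduces term-by-term the series \eqref{eq:Zmomh-kernel}. This is the continuum analog of the polynomial chaos expansion in \cite[Section~6]{CSZ21}.

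The main obstacle is uniformity: to interchange $\lim_{\eps\downarrow 0}$ with the infinite series over $m$ and over the pair-patterns $(\{i_\ell,j_\ell\})_{\ell\le m}$, one needs a continuum version of the uniform Dickman bound \eqref{uniformUN} that dominates each block's contribution by $C\,G_\theta(b_\ell-a_\ell)$ in $\eps$. Combined with the heat-kernel factors produced by the free evolution, this yields an absolutely convergent majorant (using the integrability of $G_\theta$ on $[0,1]$), so dominated convergence applies. Once this domination is in place --- it is the continuum transcription of the estimates in \cite[Section~6]{CSZ21} --- the termwise identification of kernels completes the proof.
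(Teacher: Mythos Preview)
Your proposal is correct in outline but takes a much more laborious route than the paper. The paper's proof is a one-line citation: the limiting moments of the mollified SHE were computed in \cite[Theorem~1.1 and eq.~(2.5)]{GQT21}, and one simply checks that their formula, after the $\sqrt{2}$ rescaling, coincides with the kernel $\scrK_t^{(h)}$ in \eqref{eq:Zmomh-kernel}. You are instead sketching a direct derivation of that limit --- essentially reproducing the argument of \cite{GQT21} --- via the collision local time representation, time-ordered block expansion, and Dickman-type asymptotics. The ingredients you name (block decomposition enforcing $\{i_\ell,j_\ell\}\ne\{i_{\ell-1},j_{\ell-1}\}$, convergence of pair collision kernels to $G_\theta$, uniform domination for the interchange of limit and series) are exactly the right ones, and you correctly flag the uniform bound as the main technical obstacle.

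One point of confusion: your bookkeeping around the change of variables $y_i=\sqrt{2}\,x_i$ is garbled. The factor $1/2^h$ in \eqref{eq:moments-SHF-SHE} is already explicit in the statement; it is not ``produced'' by a Jacobian in your computation. What actually needs to be shown is that $\lim_{\eps\downarrow 0}\bE_{\sqrt{2}\,\vec x}[\,\prod_{i<j}\rme^{\beta_\eps^2 L^{i,j}_{\eps,t}}\,]=\scrK_t^{(h)}(\vec x)$, so that both sides of \eqref{eq:moments-SHF-SHE} equal $2^{-h}\int\prod_i\varphi(x_i)\,\scrK_t^{(h)}(\vec x)\,\dd\vec x$. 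The $\sqrt{2}$ in the starting points (equivalently, rescaling the Brownian motions to have covariance $\tfrac12 I$) is what matches the heat kernels $g_{t/2}$ in \eqref{eq:sfQ}--\eqref{eq:sfG}; no extra Jacobian factor enters at the level of the kernel identification. This is a minor slip and does not affect the validity of your overall strategy.
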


\begin{proof}
It is enough to compare formulas \eqref{eq:Zmomh}-\eqref{eq:Zmomh-kernel}
with Theorem~1.1 and eq.~(2.5) in \cite{GQT21}.
\end{proof}

\begin{remark}
Recalling \eqref{eq:Zmom2},
we see that relation \eqref{eq:moments-SHF-SHE} for $h=2$ reduces to
\begin{equation}\label{eq:variance-check}
	\iint\limits_{(\R^2)^2} \varphi(x) \, \varphi(x')
	\, K_t^{(2)}(x,x') \, \dd x \, \dd x'
	\,=\,
	\lim_{\eps \downarrow 0} \,
	\bbvar\bigg[ \int_{\R^2} u^\eps_{\beta_\eps}(t,x\sqrt{2})
	\, \varphi(x) \, \dd x \bigg] \,.
\end{equation}
The validity of such a relation was proved in \cite[Theorem~1.7]{CSZ19b}
(note that the choice of $\theta$ in \eqref{eq:defCReps}-\eqref{beta-eps2},
which enters $K_t^{(2)} = K_{t,\theta}^{(2)}$ in \eqref{eq:Zmom2},
matches \cite[eq.~(1.38)]{CSZ19b}).
\end{remark}

\subsection{Proof of the lower bound \eqref{eq:mombd1}}

Henceforth we fix $\beta = \beta_\eps$ as in
\eqref{eq:betaepssimple}-\eqref{eq:cReps}
and omit it from notation, i.e.\ we set
$u^\eps(t,x) := u^\eps_{\beta_\eps}(t,x)$.
It follows by \eqref{eq:L}-\eqref{eq:uepsk} that
\begin{equation}\label{eq:mombd1app}
	\bbE\bigg[ \bigg(\int u^\eps(t,x\sqrt{2}) \, \varphi(x)
	\, \dd x\bigg)^m \bigg]
	= \!\!\! \int\limits_{(\R^2)^m} \, \prod_{i=1}^m \varphi (x_i)
	\, \bE_{\vec x\sqrt{2}}
	\bigg[\prod_{1\leq i<j\leq m} \rme^{\beta_\eps^2 \int_0^t
	J_\eps(B^{(i)}_s-B^{(j)}_s) \,\dd s}\bigg] \dd  \vec x \,,
\end{equation}
where we recall that $\bE_{\vec y}$ denotes expectation w.r.t.\
$m$ independent Brownian motions
with $B^{(i)}_0=y_i$.
We now take $\varphi = g_\delta$ to be the heat kernel, see \eqref{eq:gt},
and note that by diffusive scaling
we can write $g_\delta(x) = 2 \, g_{2\delta}(x\sqrt{2})$.
Then, in view of \eqref{eq:moments-SHF-SHE} and by a change of variables,
to prove \eqref{eq:mombd1} it suffices to find $\eta = \eta_{t,\theta} >0$ such
that, uniformly in $m\geq 3$ and $\delta\in (0,1)$,
\begin{equation}\label{eq:mombd2}
\begin{aligned}
	& \lim_{\eps\downarrow 0} \
	\int\limits_{(\R^2)^m} \, \prod_{i=1}^m g_{2\delta} (x_i) \,
	\bE_{\vec x}\bigg[\prod_{1\leq i<j\leq m} \rme^{\beta_\eps^2
	\int_0^t J_\eps (B^{(i)}_s-B^{(j)}_s) \,\dd s}\bigg] \dd  \vec x \\
	& \quad \geq \ (1+\eta) \,
	\lim_{\eps\downarrow 0} \Bigg( \, \int\limits_{(\R^2)^2} g_{2\delta}(x_1) \,
	g_{2\delta}(x_2)  \, \bE_{\vec x}\bigg[\rme^{\beta_\eps^2
	\int_0^t J_\eps (B^{(1)}_s-B^{(2)}_s) \,\dd s}\bigg] \,
	\dd  x_1 \, \dd x_2 \Bigg)^{m\choose 2}.
\end{aligned}
\end{equation}

We will adapt the approach in Feng's thesis \cite{Feng}, which used the Gaussian
correlation inequality \cite{R14, LM17} to prove an analogue of \eqref{eq:mombd2}
for $m=3$ with $g_\delta(\cdot)$ replaced by $\delta_0(\cdot)$.
Unfortunately, not much could be concluded in that case,
because  \emph{all moments $\bbE[u^\eps(t,0)^m]$ of order $m>1$
diverge as $\eps \downarrow 0$}: this is due to the fact that
$u^\eps(t, 0) \to 0$ in distribution
for $\beta=\beta_\eps$ in the critical window \eqref{beta-eps},
see \cite[Theorem~2.15]{CSZ17b},
while $\bbE[u^\eps(t,0)] \equiv 1$ stays constant.
We will show that the Gaussian correlation inequality can still be applied
when we average $u^\eps(t,x)$ w.r.t.\ $g_\delta$, which will lead
to the interesting bound \eqref{eq:mombd1}.

Let $Z_{2\delta}^{(1)}, \ldots, Z_{2\delta}^{(m)}$
be i.i.d.\ normal random variables on $\R^2$
with probability density $g_{2\delta}$, independent of the Brownian motions
$B^{(1)}, \ldots, B^{(m)}$ all starting from~$0$.
Denoting by $\bE$ expectation w.r.t.\ their joint law,
we can rewrite \eqref{eq:mombd2} as
\begin{equation}\label{eq:mombd3}
\begin{aligned}
	& \lim_{\eps\downarrow 0} \, \bE\bigg[\prod_{1\leq i<j\leq m}
	\rme^{\beta_\eps^2 \int_0^t J_\eps
	(Z_{2\delta}^{(i)}+B^{(i)}_s- Z_{2\delta}^{(j)} -B^{(j)}_s) \,\dd s}\bigg]  \\
	& \quad
	\ge (1+\eta) \, \lim_{\eps\downarrow 0} \, \bE\Big[\rme^{\beta_\eps^2
	\int_0^t J_\eps (Z_{2\delta}^{(1)} + B^{(1)}_s- Z_{2\delta}^{(2)}- B^{(2)}_s) \,\dd s}\Big]^{m\choose 2} \,.
\end{aligned}
\end{equation}
Next we Taylor expand the exponential in the l.h.s.: for each $i<j$, we write
\begin{align*}
	& \rme^{\beta_\eps^2 \int_0^t J_\eps (Z_{2\delta}^{(i)}+B^{(i)}_s-Z_{2\delta}^{(j)}-B^{(j)}_s) \,\dd s}
	= 1 + \sum_{n=1}^\infty \beta_\eps^{2n} \idotsint\limits_{0<s_1<\cdots <s_n<t}
	\prod_{l=1}^n J_\eps (Z_{2\delta}^{(i)} + B^{(i)}_{s_l} - Z_{2\delta}^{(j)}- B^{(j)}_{s_l}) \, \dd  \vec s \\
	& \qquad
	= 1 + \sum_{n=1}^\infty \beta_\eps^{2n} \idotsint\limits_{0<s_1<\cdots <s_n<t
	\atop y_1, \ldots, y_n>0}
	\prod_{l=1}^n \ind_{A_\eps(y_l)}(Z_{2\delta}^{(i)} + B^{(i)}_{s_l} - Z_{2\delta}^{(j)}- B^{(j)}_{s_l}) \,
	\dd \vec s \, \dd \vec y \,,
\end{align*}
where we used the decomposition $J_\eps(x) = \int_0^\infty \ind_{A_\eps(y)}(x)
\, \dd y$, with
\begin{equation}
	A_\eps(y) := \{x \in \R^2: \, J_\eps(x) \geq y\} \,.
\end{equation}
Note that $J := j * j$ is a radially symmetric and non-increasing function,
as the convolution of two radially symmetric and
non-increasing functions, as we showed in
the proof of Lemma~\ref{th:decre}. It follows that
the set $A_\eps(y)$ is a ball centered at the origin, for any $y > 0$.

We can substitute this Taylor expansion into the l.h.s.\ of \eqref{eq:mombd3} to obtain
\begin{equation}\label{eq:expansion}
	\bE\Bigg[ \prod_{1\leq i<j\leq m} \bigg(1 + \sum_{n=1}^\infty \beta_\eps^{2n}
	\idotsint\limits_{0<s_1<\cdots <s_n<t \atop y_1, \ldots, y_n>0}
	\prod_{l=1}^n \ind_{A_\eps(y_l)}(Z_{2\delta}^{(i)} + B^{(i)}_{s_l} - Z_{2\delta}^{(j)}- B^{(j)}_{s_l}) \,
	\dd \vec s \, \dd \vec y\bigg) \Bigg] \,,
\end{equation}
which, upon expansion, leads to a positive mixture of terms of the form
\begin{equation}\label{eq:mombd4}
	\bE\Bigg[ \prod_{(i,j) \in \cI} \, \prod_{l=1}^{n^{(i,j)}}
	\ind_{A_\eps(y^{(i,j)}_{l})}
	(Z_{2\delta}^{(i)} + B^{(i)}_{s^{(i,j)}_l} - Z_{2\delta}^{(j)}- B^{(j)}_{s^{(i,j)}_{l}})  \Bigg],
\end{equation}
where $\cI \subset \{(i,j): 1\leq i<j\leq m\}$ and, for each $(i,j)\in \cI$,
we have $n^{(i,j)} \in \N$ as well as $0<s^{(i,j)}_1 <\cdots <s^{(i,j)}_{n^{(i,j)}}<t$
and $y^{(i,j)}_1, \ldots, y^{(i,j)}_{n^{(i,j)}}>0$. Note that
$$
	\cW_{\cI, \vec s, \vec n}
	:=\Big((Z_{2\delta}^{(i)})_{1\leq i\leq m} \,, \, \big(B^{(i)}_{s^{(i,j)}_l}
	\,,\,
	B^{(j)}_{s^{(i,j)}_{l}}\big)_{(i,j)\in \cI, \, 1 \le l \le n^{(i,j)}} \Big)
$$
is a centered multi-dimensional Gaussian random vector.
Since $A_\eps(y)$ is a convex set symmetric about the origin (in fact, a ball),
we can apply the celebrated \emph{Gaussian correlation inequality} \cite{R14, LM17}
to lower bound
\eqref{eq:mombd4} by
\begin{equation}\label{eq:mombd5}
\begin{aligned}
	& \bE\Bigg[ \prod_{(i,j) \in \cI\cap \{(1,2), (1,3)\}} \, \prod_{l=1}^{n^{(i,j)}}
	\ind_{A_\eps(y^{(i,j)}_{l})}(Z_{2\delta}^{(i)} + B^{(i)}_{s^{(i,j)}_l} -
	Z_{2\delta}^{(j)}- B^{(j)}_{s^{(i,j)}_{l}})  \Bigg] \\
	& \qquad \times \prod_{(i,j) \in \cI \atop (i,j)\neq (1,2), (1,3)} \,
	\bE\Bigg[\prod_{l=1}^{n^{(i,j)}}
	\ind_{A_\eps(y^{(i,j)}_{l})}(Z_{2\delta}^{(i)} + B^{(i)}_{s^{(i,j)}_l} -
	Z_{2\delta}^{(j)}- B^{(j)}_{s^{(i,j)}_{l}})  \Bigg] \,,
\end{aligned}
\end{equation}
where we have kept the factors from $(i,j)=(1,2)$ and $(1,3)$ inside the same expectation,
while separating all other factors involving different $(i,j)\in \cI$.

Substituting the bound \eqref{eq:mombd5} back into the expansion of \eqref{eq:expansion}
gives a lower bound on the l.h.s.\ of \eqref{eq:mombd3}, namely
\begin{equation}\label{eq:mombd6}
\begin{aligned}
	& \bE\bigg[\prod_{1\leq i<j\leq m} \rme^{\beta_\eps^2 \int_0^t J_\eps
	(Z_{2\delta}^{(i)}+B^{(i)}_s- Z_{2\delta}^{(j)} -B^{(j)}_s) \,\dd s}\bigg]  \\
	& \ \, \geq \bE\bigg[\prod_{j=2,3} \rme^{\beta_\eps^2 \int_0^t
	J_\eps (Z_{2\delta}^{(1)}+B^{(1)}_s- Z_{2\delta}^{(j)} -B^{(j)}_s) \,\dd s}\bigg]  \,
	\bE\bigg[\rme^{\beta_\eps^2 \int_0^t
	J_\eps (Z_{2\delta}^{(1)} + B^{(1)}_s- Z_{2\delta}^{(2)}- B^{(2)}_s) \,\dd s}\bigg]^{{m\choose 2}-2} \,.
\end{aligned}
\end{equation}
Then the proof of \eqref{eq:mombd2}, and hence \eqref{eq:mombd1},
is complete once we prove the following Lemma.

\begin{lemma} \label{L:lowerbd}
There exits $\eta = \eta_{t,\theta} >0$ such that, uniformly in $\delta \in (0,1)$, we have
\begin{equation}\label{eq:mombd7}
\begin{aligned}
	\liminf_{\eps\downarrow 0} \frac{\bE\Big[\prod_{j=2,3}
	\rme^{\beta_\eps^2 \int_0^t J_\eps (Z_{2\delta}^{(1)}+B^{(1)}_s
	- Z_{2\delta}^{(j)} -B^{(j)}_s) \,\dd s}\Big]}{\bE\Big[
	\rme^{\beta_\eps^2 \int_0^t J_\eps (Z_{2\delta}^{(1)} +
	B^{(1)}_s- Z_{2\delta}^{(2)}- B^{(2)}_s) \,\dd s}\Big]^2} \geq\ 1+\eta \,.
\end{aligned}
\end{equation}
\end{lemma}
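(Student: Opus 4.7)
The plan is to reduce \eqref{eq:mombd7} to a conditional variance estimate and then control the resulting ratio via the SHF moment formulas of Section~\ref{SHF-recap}. The key structural observation is that, once we condition on $\cF_1 := \sigma(Z^{(1)}_{2\delta}, B^{(1)})$, the collision local times $L^{(1,2)}_\eps$ and $L^{(1,3)}_\eps$ are \emph{conditionally i.i.d.}: indeed, $(Z^{(2)}_{2\delta}, B^{(2)})$ and $(Z^{(3)}_{2\delta}, B^{(3)})$ are independent of each other and of $\cF_1$. Setting
\[
H_\eps := \bE\big[\rme^{L^{(1,2)}_\eps} \,\big|\, \cF_1\big],
\]
the numerator of \eqref{eq:mombd7} equals $\bE[H_\eps^2]$ and the denominator equals $\bE[H_\eps]^2$. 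The lemma is therefore equivalent to producing $\eta > 0$, uniform in $\delta \in (0,1)$, such that
\[
\liminf_{\eps\downarrow 0}\frac{\bbvar(H_\eps)}{\bE[H_\eps]^2} \,\ge\, \eta,
\]
i.e.\ $H_\eps$ must retain genuine spread in the critical limit.

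Next, I would identify both limits explicitly. By Proposition~\ref{th:SHF-SHE} applied with $h=2$ and $\varphi = g_\delta$ (together with the change of variable $x\mapsto x\sqrt{2}$), $\bE[H_\eps]$ converges as $\eps\downarrow 0$ to an explicit limit $D_*(\delta)$ proportional to $\bE[\scrZ_t^\theta(g_\delta)^2]$, which is finite and positive by \eqref{eq:Zmom2} and the log-integrability of $K_t^{(2)}$ against $g_\delta\otimes g_\delta$. For $\bE[H_\eps^2] = \bE[\rme^{L^{(1,2)}_\eps + L^{(1,3)}_\eps}]$, the Chapman--Kolmogorov / Dickman local-limit analysis leading to Theorem~\ref{th:moments} applies verbatim, now with the allowed collision pairs restricted to $\{i_\ell,j_\ell\}\in\{\{1,2\},\{1,3\}\}$ (since $L^{(2,3)}_\eps$ does not enter). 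This yields a finite explicit limit $N_*(\delta)$, represented as a series over sequences alternating between these two pair types.

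For each fixed $\delta$, the strict inequality $N_*(\delta) > D_*(\delta)^2$ admits a clean diagrammatic interpretation. Introducing an independent copy of particle~$1$, one sees that $D_*^2$ corresponds to an \emph{unconstrained} convolution of two independent single-pair series (the two collision sequences are glued to \emph{distinct} first-particle trajectories), while $N_*$ glues them along the \emph{same} trajectory of particle~$1$, which forces the alternation $\{i_\ell,j_\ell\}\ne\{i_{\ell-1},j_{\ell-1}\}$ and produces genuinely new interlaced contributions. Each diagrammatic integrand being strictly positive, $N_*(\delta) - D_*(\delta)^2 > 0$ for every $\delta\in(0,1)$. Equivalently, $H_\eps$ is a non-trivial functional of the random trajectory $\gamma_s := Z^{(1)}_{2\delta} + B^{(1)}_s$ (different trajectories produce different expected collision intensities with a Gaussian-initialised second walker), so it cannot collapse to a $\cF_1$-a.s.\ constant in the limit.

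The main obstacle is the \emph{uniformity in} $\delta$. Both $N_*(\delta)$ and $D_*(\delta)^2$ diverge as powers of $\log(1/\sqrt\delta)$ as $\delta\downarrow 0$ (compare Proposition~\ref{P:momasy}), so a bare continuity/compactness argument fails in that regime. The strategy is to extract the leading-order asymptotic coefficient of $N_*(\delta) - D_*(\delta)^2$ as $\delta\downarrow 0$ and verify that it strictly exceeds the leading coefficient of the factorised product $D_*(\delta)^2$; this is the precise analogue, restricted to the two pair types $\{1,2\}$ and $\{1,3\}$, of the mechanism underlying Theorem~\ref{th:3mom} and Proposition~\ref{th:bound-comp}. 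On any complementary interval $\delta\in[\delta_0,1]$ with $\delta_0>0$, continuity of $N_*(\delta)$ and $D_*(\delta)$ together with the strict inequality above yields uniform positivity of $\eta(\delta) := (N_*-D_*^2)/D_*^2$ by compactness. Thus the technical heart of the proof lies in the sharp leading-log diagrammatic asymptotic of the interlaced contributions as $\delta\downarrow 0$.
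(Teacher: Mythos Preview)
Your first reduction is exactly what the paper does: condition on the first particle, write the ratio as $\bE[H_\eps^2]/\bE[H_\eps]^2$ with $H_\eps$ the conditional one-pair weight, and aim for a uniform lower bound on the conditional variance. The divergence comes at the next step.

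The paper does \emph{not} attempt a direct diagrammatic comparison of $N_*(\delta)$ and $D_*(\delta)^2$. Instead it applies a second Jensen reduction: it replaces $H_\eps=\Phi_{\eps,\delta}(W^{(1)})$ by its conditional expectation $\widebar\Phi_{\eps,\delta}(W^{(1)}_0,W^{(1)}_t)$ given only the \emph{endpoints} of the first trajectory. Since $\bE[X^2]\ge \bE[\bE[X|\cG]^2]$, this only lowers the ratio, and the resulting two-variable object can be computed explicitly via Gaussian convolution and the Dickman kernel $G_\theta$. After normalising by its mean one obtains a function $\widebar\Psi_\delta(y)$ of $W^{(1)}_t=y$ alone, and the crucial point is that $\widebar\Psi_\delta$ has a \emph{finite, explicit} pointwise limit $\widebar\Psi_0(y)$ as $\delta\downarrow 0$ (essentially a ratio of heat kernels weighted by $G_\theta$). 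Since $\widebar\Psi_0$ is not a.s.\ constant and has mean one, $\bE[\widebar\Psi_0(B^{(1)}_t)^2]>1$, and Fatou plus continuity in $\delta$ on compact sets give the uniform $\eta>0$.

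Your route, by contrast, keeps the full path dependence and therefore faces divergent quantities on both sides: you correctly identify that $N_*(\delta)$ and $D_*(\delta)^2$ both blow up logarithmically, so the uniformity question becomes a sharp leading-order comparison of interlaced versus factorised diagrams. You do not carry this out, and it is not obvious that the mechanism behind Proposition~\ref{th:bound-comp} (which concerns the \emph{full} three-particle interaction, including pair $\{2,3\}$) transfers to the wedge configuration with only $\{1,2\}$ and $\{1,3\}$. The endpoint projection is precisely the device that sidesteps this: it trades the hard asymptotic comparison for a soft argument (non-constancy of an explicit limiting function), at the modest cost of a Jensen inequality that only helps.
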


\begin{proof}
Let us define $W^{(i)}_s= Z_{2\delta}^{(i)}+B^{(i)}_s$ and
$W^{(i)} = (W^{(i)}_s)_{0 \le s \le t}$. We  introduce the shortcuts
\begin{align*}
	\Phi_{\eps, \delta}(W^{(1)})
	&:= \bE\Big[\rme^{\beta_\eps^2 \int_0^t J_\eps (W^{(1)}_s - W^{(2)}_s) \,\dd s}\,
	\Big|\, W^{(1)}\Big] \,, \\
	\widebar\Phi_{\eps, \delta}(W^{(1)}_0, W^{(1)}_t)
	& := \bE\big[ \Phi_{\eps, \delta}(W^{(1)}) \big| W^{(1)}_0, W^{(1)}_t \big] \,,
\end{align*}
so that the ratio in the l.h.s.\ of \eqref{eq:mombd7} can be written as
\begin{equation*}
	\frac{\bE[\Phi_{\eps, \delta}(W^{(1)})^2]}{\bE[\Phi_{\eps, \delta}(W^{(1)})]^2}
	= \frac{\bE[\bE[\Phi_{\eps, \delta}(W^{(1)})^2 | W^{(1)}_0,
	W^{(1)}_t]]}{\bE[\bE[\Phi_{\eps, \delta}(W^{(1)}) | W^{(1)}_0,
	W^{(1)}_t]]^2}
	\geq \frac{\bE[\widebar \Phi_{\eps, \delta}
	(W^{(1)}_0, W^{(1)}_t)^2]}{\bE[\widebar\Phi_{\eps, \delta}(W^{(1)}_0, W^{(1)}_t)]^2} \,,
\end{equation*}
by Jensen's inequality.
Therefore it suffices to show that, uniformly for $\delta \in (0,1)$,
\begin{equation}\label{eq:mombd8}
	\liminf_{\eps\downarrow 0} \,
	\bE\Bigg[\Bigg(\frac{\widebar\Phi_{\eps, \delta}(W^{(1)}_0, W^{(1)}_t)}
	{\bE[\widebar\Phi_{\eps, \delta}(W^{(1)}_0, W^{(1)}_t)]}\Bigg)^2\Bigg] \geq 1+\eta \,.
\end{equation}

Let us show that the fraction in the l.h.s.\
has a limit as $\eps\downarrow 0$. We treat separately numerator
and denominator, starting from the latter:
by \eqref{eq:mombd1app} with $m=2$ and $\varphi = g_\delta$,
\begin{equation*}
	\bE[\widebar\Phi_{\eps, \delta}(W^{(1)}_0, W^{(1)}_t)]
	= \bbE\bigg[ \bigg( \! \int \! u^\eps(t,x\sqrt{2}) \, g_{\delta}(x) \dd x
	\bigg)^2 \bigg]
\end{equation*}
hence by \eqref{eq:moments-SHF-SHE} with $h=2$, recalling \eqref{eq:Zmom2}, we get
\begin{equation}\label{eq:Phi1}
\begin{aligned}
	\widebar \Phi_\delta
	& \,:=\,
	\lim_{\eps\downarrow 0} \,
	\bE[\widebar\Phi_{\eps, \delta}(W^{(1)}_0, W^{(1)}_t)]
	\,= \, 4 \, \bbE\big[ \scrZ^\theta_t(g_\delta)^2 \big] \\
	& \,=\, 1 + \iint\limits_{(\R^2)^2}
	g_\delta(x_1) \, g_\delta(x_2) \, K^{(2)}_{t} (x_1,x_2) \,
	\dd x_1 \, \dd x_2 \\
	& \,=\, 1 + 2\pi \, \iint\limits_{0 < s < u < t}
	g_{2\delta+s}(0) \,  G_\theta(u-s) \,
	\dd s\, \dd u
	\ \underset{\delta\downarrow 0}{\sim} \
	\bigg(\int_0^t G_\theta(u) \, \dd u \bigg) \log \frac{1}{\delta} \,.
\end{aligned}
\end{equation}

Next we focus on the numerator:
in analogy with \eqref{eq:uepsk}, we can write
\begin{equation}\label{eq:barPhi}
\begin{aligned}
	\widebar\Phi_{\eps, \delta} (x_1, y)
	& := \bE\Big[\rme^{\beta_\eps^2 \int_0^t J_\eps (W^{(1)}_s - W^{(2)}_s) \,\dd s}
	\,\Big|\, W^{(1)}_0=x_1, W^{(1)}_t=y\Big] \\
	& = \int_{\R^2} g_{2\delta}(x_2)\,
	\bbE[ u^{\eps}(t,x_1 | y) \, u^\eps(t,x_2)] \, \dd x_2,
\end{aligned}
\end{equation}
where we define $u^{\eps}(t,x_1 | y)$ as a modification of
the Feynman-Kac formula \eqref{eq:SHE2}:
\begin{equation}\label{eq:SHE2bis}
	u^{\eps}(t,x_1 | y)
	:= \bE_{x_1} \Big[\rme^{\beta \int_0^t \xi^\eps(u, B_u)\,\dd u
	\, - \, \frac{1}{2}\beta^2
	\Vert j_\eps\Vert_2^2 t} \,\Big|\, B_t = y  \Big]
\end{equation}
(we recall that
$\bE_{x_1}$ is the expectation for a brownian motion $B$ started at $B_0= x_1$,
so that conditioning on $B_t = y$ yields a Brownian bridge). In \cite[Theorem 1.7 \& Section~8]{CSZ19b},
a formula for $\lim_{\eps\downarrow 0} \E[(\int \phi(x) u^\eps(t,x))^2]$ was derived using chaos expansion and renewal
type arguments. The same arguments can be adapted to show that
\begin{equation*}
\begin{aligned}
	&  \lim_{\eps\downarrow 0} \bbE[u^{\eps}(t,x_1|y) \, u^{\eps}(t,x_2)] \\
	& \quad = 1+  4\pi
	\idotsint\limits_{\substack{z, w \in \R^2 \\ 0<s<u<t}}  g_{s}(z-x_1) \,
	g_{s}(z-x_2) \, G_\theta(u-s) \,
	g_{\frac{u-s}{2}}(w-z) \,
	\frac{g_{t-u}(y-w)}{g_t(y-x_1)} \, \dd z\, \dd w \, \dd s \, \dd u \,,
\end{aligned}
\end{equation*}
where the integral is equal to (modulo some different constants as explained in Appendix~\ref{sec:comparison}) the covariance kernel $K^{(2)}(x_1, x_2)$ defined in \eqref{eq:Zmom2} and
illustrated in Figure \ref{fig:GMC-2},  if the factor $g_{t-u}(y-w)/g_t(y-x_1)$ was not present.\footnote{For consistency:
if we remove that factor, the r.h.s.\ becomes $1 + 4\pi \int_{0<s<u<t} g_{2s}(x_1-x_2)
\, G_\theta(u-s) \, \dd s \, \dd u$, which is consistent
with formula \eqref{eq:moments-SHF-SHE}
once we plug in $x_1\sqrt{2}$ and $x_2\sqrt{2}$;
see also \eqref{eq:variance-check}.} This factor is the conditional transition kernel from $(u, w)$ to $(t, y)$, originating from
the conditioning on $B_t=y$ in the definition of $u^{\eps}(t,x_1|y)$, while $(u,w)$ is the last time-space point of matching disorder between the chaos expansions of $u^{\eps}(t,x_1|y)$ and $u^{\eps}(t,x_2)$. This factor disappears if we average over the law of $y=B_t$.
Therefore
\begin{align} \label{eq:Phi2}
	& \widebar\Phi_{\delta}(x_1, y)
	\,:=\, \lim_{\eps\downarrow 0} \widebar\Phi_{\eps, \delta} (x_1, y) \\
	& \quad = 1+ 4\pi
	\idotsint\limits_{\substack{x_2, z \in \R^2 \\ 0<s<u<t}}\,
	\frac{g_{2\delta}(x_2) \, g_{s}(z-x_1) \, g_{s}(z-x_2)
	\, g_{t-\frac{u+s}{2}}(y-z)}{g_t(y-x_1)} \, G_\theta(u-s)
	\, \dd z\, \dd x_2 \, \dd s \, \dd u \,. \nonumber
\end{align}

We can now combine \eqref{eq:Phi1} and \eqref{eq:Phi2},
where $\widebar\Phi_{\delta}(x_1, y)$ and $\widebar \Phi_\delta$
are defined: if we define
\begin{equation}
	\Psi_\delta(x_1, y) := \frac{\widebar\Phi_{\delta}(x_1, y)}
	{\widebar \Phi_\delta} \,,
\end{equation}
then by Fatou's lemma we can bound
$$
	\liminf_{\eps\downarrow 0}  \bE\Bigg[\Bigg(
	\frac{\widebar\Phi_{\eps, \delta}
	(W^{(1)}_0, W^{(1)}_t)}{\bE[\widebar\Phi_{\eps, \delta}
	(W^{(1)}_0, W^{(1)}_t)]}\Bigg)^2\Bigg] \geq \bE\big[ \Psi_\delta
	(W^{(1)}_0, W^{(1)}_t)^2 \big] \,.
$$
It is easy to check that $\bE[\Psi_\delta(W^{(1)}_0, W^{(1)}_t)]=1$
(see \eqref{eq:checkEPsi} below).
Since $\Psi_\delta(W^{(1)}_0, W^{(1)}_t)$ is clearly not a constant,
it follows by Jensen's inequality that for any $\delta \in (0,1)$
\begin{equation*}
	\bE\big[\Psi_\delta(W^{(1)}_0, W^{(1)}_t)^2\big] > 1 \,.
\end{equation*}
Since $\delta \mapsto \bE[\Psi_\delta(W^{(1)}_0, W^{(1)}_t)^2]$ is continuous, to
prove \eqref{eq:mombd8} it only remains to show that
\begin{equation}\label{eq:Phi3}
	\lim_{\delta\downarrow 0} \, \bE\big[
	\Psi_\delta(W^{(1)}_0, W^{(1)}_t)^2 \big] \,>\, 1 \,.
\end{equation}

Denote $\widebar \Psi_\delta(W^{(1)}_t):=
\bE[\Psi_\delta(W^{(1)}_0, W^{(1)}_t) | W^{(1)}_t]$.
By $W^{(1)}_s = Z^{(1)}_{2\delta}+B^{(1)}_s$, we have
\begin{align*}
	\widebar \Psi_\delta(y) & = \frac{1}{\widebar \Phi_\delta}
	\int \widebar\Phi_{\delta}(x_1, y)  \,
	\frac{g_{2\delta}(x_1) g_t(y-x_1)}{g_{t+2\delta}(y)} \dd x_1 \\
	& = \frac{1}{\widebar \Phi_\delta}
	\bigg(1 + \frac{4\pi}{g_{t+2\delta}(y)}
	\idotsint\limits_{\substack{x_1, x_2, z \in \R^2 \\ 0<s<u<t}}
	g_{2\delta}(x_1) \, g_{2\delta}(x_2)
	\, g_{s}(z-x_1) \, g_{s}(z-x_2) \nonumber \\
	& \qquad\qquad\qquad\qquad\qquad\qquad\quad
	\times g_{t-\frac{u+s}{2}}(y-z) \, G_\theta(u-s)
	\, \dd z \, \dd x_1 \, \dd x_2 \, \dd s \, \dd u \bigg) \\
	& = \frac{1}{\widebar \Phi_\delta}
	\bigg(1 + \frac{4\pi}{g_{t+2\delta}(y)}
	\iiint\limits_{\substack{z \in \R^2 \\ 0<s<u<t}}
	g_{2\delta+s}(z)^2 \, g_{t-\frac{u+s}{2}}(y-z) \, G_\theta(u-s)
	\, \dd z \, \dd s \, \dd u \bigg) \,,
\end{align*}
and since $g_{2\delta+s}(z)^2 = g_{2(2\delta+s)}(0) \, g_{\delta+\frac{s}{2}}(z)$
by \eqref{eq:identity}, we obtain
\begin{align}
	\widebar \Psi_\delta(y)
	& = \frac{1}{\widebar \Phi_\delta}
	\bigg(1 + \frac{4\pi}{g_{t+2\delta}(y)} \, \iint\limits_{0<s<u<t}
	g_{2(2\delta+s)}(0) \, g_{t+\delta-\frac{u}{2}}(y) \, G_\theta(u-s)
	\, \dd s \, \dd u \bigg) \nonumber \\
	& = \frac{1}{\widebar \Phi_\delta}
	\bigg(1 + \iint\limits_{0<s<u<t}
	\frac{1}{2\delta+s} \,
	\frac{g_{t + \delta -\frac{u}{2}}(y)}{g_{t+2\delta}(y)}  \, G_\theta(u-s)
	\, \dd s \, \dd u \bigg) \,. \label{eq:Phi4}
\end{align}
Incidentally, this relation together with \eqref{eq:Phi1} shows that
\begin{equation}\label{eq:checkEPsi}
	\bE[\Psi_\delta(W^{(1)}_0, W^{(1)}_t)]
	= \bE[\widebar\Psi_\delta(W^{(1)}_t)]
	= \int\limits_{\R^2} \widebar\Psi_\delta(y) \,
	g_{t+2\delta}(y) \, \dd y = 1 \,.
\end{equation}

Note that as $\delta\downarrow 0$, the dominant contribution to the integral
in \eqref{eq:Phi1} for $\widebar \Phi_\delta$ comes from $s \ll 1$,
since we can restrict the integral to $s < (\log \frac{1}{\delta})^{-1}$ (say)
without changing the asymptotic behavior.
The same is true for the integral in \eqref{eq:Phi4},
hence we obtain
$$
	\lim_{\delta\downarrow 0} \, \widebar \Psi_\delta(y)
	= \widebar \Psi_0 (y) \,:=\, \frac{\int_0^t
	g_{t-\frac{u}{2}}(y) \,
	 G_\theta(u) \, \dd u}{g_t(y)
	 \, \int_0^t G_\theta(u) \, \dd u}
	 \,,
$$
which implies that $\widebar \Psi_\delta(W^{(1)}_t)
= \widebar \Psi_\delta(Z^{(1)}_{2\delta} + B^{(1)}_t)$ converges in law
to $\widebar \Psi_0(B^{(1)}_t)$ as $\delta\downarrow 0$.
Therefore, by Jensen's inequality and Fatou's lemma,
$$
	\lim_{\delta\downarrow 0} \bE\big[\Psi_\delta(W^{(1)}_0, W^{(1)}_t)^2\big]
	\,\geq\, \lim_{\delta\downarrow 0}
	\bE\big[ \widebar \Psi_\delta(W^{(1)}_t)^2 \big]
	\,\geq\, \bE\big[ \widebar \Psi_0(B^{(1)}_t)^2 \big] \,>\, 1 \,,
$$
where the last inequality holds because $\bE[\widebar \Psi(B^{(1)}_t)] =1$ and
$\widebar \Psi(B^{(1)}_t)$ is not a.s.\ equal to~$1$.
This concludes the proof of \eqref{eq:Phi3},
hence of Lemma~\ref{L:lowerbd}.
\end{proof}

\subsection{Proof of Proposition~\ref{P:momasy}}

The log-divergence of the second moment kernel $\scrK_t^{(2)}(x,y)$ of the SHF,
see \eqref{eq:rhot}, plays a crucial role.
Recall from  \eqref{eq:wefix} and \eqref{eq:Zmom2} that
$$
	\scrK_t^{(2)}(x,y) = \rme^{k_t(x,y)}= 1+ K^{(2)}_t(x,y)
	= 1+ 2\pi \, \iint\limits_{0 < s < u < t} g_{s}(x-y) \,
	G_\theta(u-s) \, \dd s \, \dd u \,,
$$
which is a monotonically decreasing function of $|x-y|$. By a change of variable,
$$
	2\pi \, \iint\limits_{0 < s < u < t} g_{s}(x-y) \,
	G_\theta(u-s) \, \dd s \, \dd u
	= \int_0^{t |x-y|^{-2}} \frac{\rme^{-\frac{1}{2\tilde s}}}{\tilde s} \,
	\bigg( \int_0^{t-|x-y|^2 \tilde s} G_\theta(v) \, \dd v \bigg)
	\, \dd \tilde s \,,
$$
and note that, as $|x-y|\downarrow 0$, the dominant
contribution to the integral comes
from the range of values $1\ll \tilde s \ll |x-y|^{-2}$. Therefore, as $|x-y|\downarrow 0$,
\begin{equation}\label{eq:ktasy}
	\scrK_t^{(2)}(x,y) = \rme^{k_t(x,y)}
	\sim \bigg(\int_0^t G_\theta(v) \dd v \bigg) \,
	\log\frac{t}{|x-y|^2}
	\sim C_{t,\theta} \,  \log\frac{1}{|x-y|} \,,
\end{equation}
where we set $C_{t,\theta} := 2 \int_0^t G_\theta(v) \, \dd v$.

Applying the moment formula \eqref{eq:Mmom3} and \eqref{eq:ktasy} to the l.h.s.\ of
\eqref{eq:momasy}, we find that as $\delta\downarrow 0$,
\begin{align*}
	\bbE\big[ \big( 2\, \scrM_t^\theta(g_\delta) \big)^m \big]
	& = \int\limits_{(\R^2)^m} \prod_{i=1}^m \, g_\delta (x_i) \
	\rme^{\sum_{1\leq i<j\leq m} k_t(x_i, x_j)} \, \dd \vec x \\
	& =  (1+o(1)) \, (C_{t,\theta})^{m\choose 2} \int\limits_{(\R^2)^m}
	\prod_{i=1}^m \, g_\delta (x_i) \, \prod_{1\leq i<j\leq m}
	\, \log \frac{1}{|x_i-x_j|} \, \dd \vec x.
\end{align*}
Via the change of variable $y_i = x_i / \sqrt{\delta}$,
the integral in the r.h.s.\ can be written as
$$
	\int\limits_{(\R^2)^m} \prod_{i=1}^m \, g_1 (y_i)
	\prod_{1\leq i<j\leq m}  \bigg(
	\log \frac{1}{\sqrt\delta} + \log \frac{1}{|y_i-y_j|} \bigg) \,
	\dd \vec y \,\sim\, \Big(\log \frac{1}{\sqrt{\delta}}\Big)^{m\choose 2},
$$
where the asymptotic equivalence as $\delta\downarrow 0$ follows by expanding the product and
noting the finiteness of the integrals. This shows that, as $\delta \downarrow 0$,
\begin{equation*}
	\bbE\big[ (2\, \scrM_t^\theta(g_\delta))^m \big]
	\sim \bigg( C_{t,\theta} \, \log \frac{1}{\sqrt{\delta}}\bigg)^{m\choose 2} ,
\end{equation*}
which proves \eqref{eq:momasy} and completes the proof of Proposition~\ref{P:momasy}.\qed

\appendix

\section{On the critical windows}
\label{sec:crit-wind}

In this secion,
we compare the critical windows for directed polymers and for the mollified
Stochastic Heat Equation.

\subsection{Directed polymer setting}

The critical scaling of
$\beta = \beta_N$ for the directed polymer partition functions \eqref{eq:paf}
is defined by the following asymptotic relation:
 \begin{equation} \label{intro:sigma}
	\sigma_N^2 := \rme^{\lambda(2\beta_N)-2\lambda(\beta_N)}-1
	= \frac{1}{R_N} \bigg(1 + \frac{\theta + o(1)}{\log N}\bigg) \,,\quad\quad
	\text{for some fixed } \theta\in \R \,,
\end{equation}
where $\lambda(\cdot)$ is the log-moment generating function of the
disorder, see \eqref{eq:lambda}, while
$R_N$ is the expected replica overlap of two
independent simple symmetric random walks $S, S'$ on $\Z^2$:
\begin{equation}\label{eq:RN0}
\begin{split}
	R_N &:= \E \bigg[ \sum_{n=1}^N \ind_{\{S_n = S'_n\}}\bigg]
	= \sum_{n=1}^N \sum_{z\in\Z^2} \P(S_n = z)^2
	= \sum_{n=1}^N \P(S_{2n}=0)  \\
	& = \sum_{n=1}^N \bigg\{ \frac{1}{2^{2n}} \binom{2n}{n}\bigg\}^2
	= \frac{\log N}{\pi} +\frac{\alpha}{\pi}+o(1) \qquad \text{as $N\to \infty$} \,,
\end{split}
\end{equation}
with $\alpha=\gamma+\log 16-\pi$ and 
$\gamma = - \int_0^\infty \log u \, \rme^{-u} \, \dd u \approx 0.577$ the Euler-Mascheroni constant.

Since $\lambda(\beta)\sim \tfrac{1}{2} \beta^2$
as $\beta \to 0$, it follows  from \eqref{intro:sigma} that
$\beta_N^2 \sim \pi/\log N$ as $N\to\infty$. The parameter $\theta \in \R$
tunes the higher order asymptotic behavior of $\beta_N$,
which also depends on the third and fourth cumulants
$\kappa_3, \kappa_4$ of the disorder:
see \cite[eq.~(1.17)]{CSZ19b} for the exact formula,
which simplifies
when $\kappa_3 = 0$ (e.g.\ for symmetric disorder distribution) and yields
\begin{equation}\label{eq:betasimple2}
	\begin{split}
	\beta_N^2 = \frac{\pi}{\log N}
	\bigg( 1 + \frac{\theta - c + o(1)}{\log N}\bigg)
	\qquad \text{where} \quad
	c &:= \alpha + \tfrac{1}{2}\pi + \tfrac{7}{12}\pi \kappa_4 \\
	&= \gamma + \log 16 - \tfrac{1}{2}\pi + \tfrac{7}{12}\pi \kappa_4 \,,
	\end{split}
\end{equation}
that is \eqref{eq:betasimple} holds with $\rho = \theta - c$.

\subsection{Stochastic Heat Equation setting}
\label{sec:crit-wind-SHE}

We next consider the Stochastic Heat Equation \eqref{eq:SHE0} with
mollified noise $\xi^\epsilon (t,x) = (\xi(t,\cdot) * j_\epsilon)(x)$,
where $j_\epsilon(x) := \epsilon^{-2} j(\epsilon^{-1}x)$.
The critical scaling $\beta = \beta_\epsilon$ is
(see \cite[eq.~(8.28)]{CSZ19b}):
\begin{equation}\label{eq:betaepssimple}
	\beta_\epsilon^2 = \frac{1}{\cR_\epsilon} \bigg(1+ \frac{\theta + o(1)}{\log \epsilon^{-2}}\bigg)
\end{equation}
where $\cR_\epsilon$ is defined as follows
(see \cite[Section~8.2]{CSZ19b}):
\begin{equation}\label{eq:cReps}
\begin{split}
	\cR_\epsilon &\,=\, \int_0^{\epsilon^{-2}} \bigg(
	\int_{(\R^2)^2} J(x) \, J(y)
	\, g_{2t}(x-y)  \, \dd x
	\, \dd y \bigg) \dd t \,.
\end{split}
\end{equation}
Note that we can view $\cR_\epsilon$ as the expected replica overlap
of two independent
Brownian motions $B, B'$ on $\R^2$ enlarged via $J := j * j$
into Wiener sausages, described by the functions $J_{B_t}(z) := J(z-B_t)$
and $J_{B'_t}(z) := J(z-B'_t)$:
\begin{equation}\label{eq:Reps}
\begin{split}
	\cR_\epsilon
	&= \int_0^{\epsilon^{-2}} \int_{(\R^2)^3}
	J(x) \, J(y) \, g_t(z-x) \, g_t(z-y)  \,  \dd x \, \dd y \, \dd z \, \dd t \\
	& = \int_0^{\epsilon^{-2}} \int_{\R^2}
	\bbE[J(z-B_t) \, J(z-B'_t)] \, \dd z \, \dd t
	= \bbE \bigg[ \int_0^{\epsilon^{-2}}
	\langle J_{B_t}, J_{B'_t} \rangle_{L^2(\R^2)}  \,  \dd t \bigg] \,.
\end{split}
\end{equation}

It was shown in \cite[end of Section~8.2]{CSZ19b} that
\begin{equation}\label{eq:cReps-as}
	\cR_\epsilon \,=\, \frac{\log\epsilon^{-2}}{4\pi} + \frac{C}{4\pi} + o(1)
	\qquad \text{as } \epsilon \downarrow 0 \,,
\end{equation}
where
\begin{equation} \label{eq:defCReps}
	C = 2 \int_{(\R^2)^2} J(x) \, \log\frac{1}{|x-y|} \, J(y) \, \dd x \, \dd y
	+ \log 4 - \gamma \,.
\end{equation}
Plugging this into \eqref{eq:betaepssimple} yields
\begin{align}\label{beta-eps2}
	\beta^2_\epsilon =
	\frac{4\pi}{\log \epsilon^{-2}}
	\bigg( 1 + \frac{\theta - C +o(1)}{\log\epsilon^{-2}} \bigg) \,.
\end{align}
that is \eqref{beta-eps} holds with $\rho = \theta - C$.

\subsection{Matching directed polymers with the Stochastic Heat Equation}
\label{sec:comparison}

In this appendix we explain heuristically relation \eqref{eq:conj}.

The Stochastic Heat Flow $\scrZ_t^\theta(\dd x)$ is the limit
of the directed polymer random measure
\begin{equation} \label{eq:point-to-plane}
	\cZ^{\beta_N}_{N;\, 0,t}(\dd x, \R^2) = \frac{1}{2} \,
	\sum_{z\in\Z^2_\even} Z_{0, \ev{Nt}}^{\beta,\,\omega}(\ev{\sqrt{N} x}, z) \,,
\end{equation}
see \eqref{eq:rescZmeas} and Theorem~\ref{thm:2dSHF}. We can then rewrite \eqref{eq:conj} as
\begin{equation} \label{eq:conj2special}
	\cZ^{\beta_N}_{N;\, 0,t}(\dd x, \R^2) 
	\! \underset{\ N = \epsilon^{-2}}{\overset{\rule[-.28em]{0pt}{.28em}d}{\approx}}\,
	\frac{1}{2} \, u^\epsilon_{\beta_\epsilon}\big( t,x\sqrt{2} \big) \, \dd x
	\qquad \text{as } \epsilon \downarrow 0 \,,
\end{equation}
where the disorder strengths in the two sides are
tuned in the respective critical windows,
see \eqref{intro:sigma} or \eqref{eq:betasimple2} for $\beta_N$
and \eqref{beta-eps2} for $\beta_\epsilon$, \emph{for the same value of~$\theta\in\R$}.

Relation \eqref{eq:conj2special} is expected to hold by comparing both sides to the same
coarse-grained model in \cite{CSZ23}. However, we
can simply explain the scaling factors in \eqref{eq:conj2special} by comparing the mean and covariance
of both sides:
\begin{itemize}
\item the multiplicative factor $\frac{1}{2}$ is due to the \emph{periodicity} 
of the simple random walk: 
we have indeed $\bbE[\cZ^{\beta_N}_{N;\, 0,t}(\dd x, \R^2)] = \frac{1}{2} \, \dd x$,
see \eqref{eq:point-to-plane}, while $\bbE[u^\epsilon_{\beta_\epsilon}(\cdot,\cdot)] \equiv 1$;
\item the factor $\sqrt{2}$ is because each random walk component
has \emph{variance}~$\frac{1}{2}$:
we have $\bbcov[\cZ^{\beta_N}_{N;\, 0,t}(\dd x, \R^2), \cZ^{\beta_N}_{N;\, 0,t}(\dd y, \R^2)]
\sim \frac{1}{4} \, K_t^{(2)}(x-y) \, \dd x \, \dd y$,
see \cite[Rem.~3.7]{CSZ23}, while
$\bbcov[u^\epsilon_{\beta_\epsilon}(t,x), u^\epsilon_{\beta_\epsilon}(t,y)] \sim 
K_t^{(2)}(\frac{x-y}{\sqrt{2}})$,
see \cite[eq.~(3.14)]{BC98}, \cite[Thm.~1.9]{CSZ19b}.
\end{itemize}

\smallskip

We now give a heuristic derivation of relation \eqref{eq:conj2special}.
Let $(S_n)$ be a $\sfT$-periodic random walk on $\Z^2$
(i.e.\ $S_{n}$
takes values in a sub-lattice $\bbT_n \subset \Z^2$
whose cells have area~$\sfT$) with covariance matrix $\sfs I$.
For the simple random walk we have $\sfs = \frac{1}{2}$ and $\sfT = 2$ with
\begin{equation}
	\bbT_n := \begin{cases}
	\Z^2_\even & \text{ for $n$ even} \,, \\
	\Z^2_\odd := \Z^2 \setminus\Z^2_\even
	& \text{ for $n$ odd} \,,
	\end{cases}
\end{equation}
see \eqref{eq:even}. 
The parameters $\sfs$ and $\sfT$ enter in the local limit theorem: recalling that
$g_t(x)$ denotes the heat kernel, see \eqref{eq:gt}, we have as $n\to\infty$
\begin{equation}\label{eq:llt}
	\P(S_{n}=z) = \big( g_{\sfs n}(z) + o(n^{-1}) \big)
	\sfT \, \ind_{\bbT_n}(z) \,.
\end{equation}
We insist on the use of general parameters $\sfs$ and $\sfT$, instead of the particular
values $\frac{1}{2}$ and $2$, because the following arguments become more transparent.

\smallskip

The solution $u^\epsilon_\beta(t,x)$ of the mollified Stochastic Heat Equation \eqref{eq:SHE0}
can be viewed as the partition function for a \emph{Brownian directed polymer $B$
in a mollified white-noise random environment~$\xi^\epsilon$},
comparing \eqref{eq:paf} with the Feynman-Kac representation formula \eqref{eq:SHE2}.
To account for the random walk variance~$\sfs$ and periodicity~$\sfT$, we can
modify \eqref{eq:SHE2} as follows:
\begin{itemize}
\item we replace $(B,x)$ by $(\sqrt{\sfs}\, B,\,  x/\sqrt{\sfs})$
to get a Brownian motion with variance~$\sfs$ started at~$x$
and, accordingly, we replace the mollified white noise $\xi^\epsilon$
by $\xi^{\sqrt{\sfs} \,\epsilon}$;
\item we replace $\beta$ by $\sqrt{\sfT} \beta$; this ensures that 
computing the variance $u^\epsilon_\beta(t,x)$ as a power series in $\beta^2$,
arising from the polynomial chaos expansion  (see e.g.\ \cite[eq.~(8.12)]{CSZ19b}),
each heat kernel is multiplied by~$\sfT$, matching the local limit theorem~\eqref{eq:llt}.
\end{itemize}
Overall, since $\sqrt{T} \beta \, \xi^{\sqrt{\sfs}\,\epsilon}(t-u, \sqrt{\sfs}\, B_u)$
has the same distribution as $\sqrt{T/\sfs}\, \beta \, \xi^{\epsilon}(t-u, B_u)$,
we can simply \emph{modify the Feynman-Kac formula \eqref{eq:SHE2}
replacing $x$ by $x/\sqrt{\sfs}$ and $\beta$ by $\sqrt{T/\sfs}\, \beta$}.

\smallskip

Summarizing, for the directed polymer random measure $\cZ^{\beta_N}_{N;\, 0,t}(\dd x, \R^2)$
defined in analogy with \eqref{eq:point-to-plane}, with $\frac{1}{2}$ replaced by $\frac{1}{\sfT}$
and $\Z^2_\even$ replaced by $\bbT_0$, we expect that
\begin{equation} \label{eq:conj2}
	\cZ^{\beta_N}_{N;\, 0,t}(\dd x, \R^2)
	\! \underset{\ N = \epsilon^{-2}}{\overset{\rule[-.28em]{0pt}{.28em}d}{\approx}}\,
	\tilde u(t, x) \, \dd x \qquad
	\text{with} \qquad \tilde u(t,x) := 
	\frac{1}{\sfT} \,
	u^\epsilon_{\sqrt{\frac{\sfT}{\sfs}}\beta_N}
	\big(t,\tfrac{x}{\sqrt{\sfs}}\big) \,.
\end{equation}
For $\sfs=\tfrac{1}{2}$ and $\sfT = 2$,
this equation is ``close'' to \eqref{eq:conj2special}
since $\sqrt{\sfT/\sfs}\, \beta_N = 2 \, \beta_N \sim \beta_\eps$, 
cf.\ \eqref{eq:betasimple2} and \eqref{beta-eps2}.
For an accurate comparison, we should replace $\sqrt{\sfT/\sfs} \, \beta_N$
by $\beta_\epsilon$ in the definition of $\tilde u(t,x)$ in \eqref{eq:conj2},
which leads to \eqref{eq:conj2special}.

\smallskip

Finally, we note that $\tilde{u}(t,x)$ from \eqref{eq:conj2}
solves a mollified Stochastic Heat Equation
with adjusted coefficients, to account for the
random walk variance~$\sfs$ and periodicity~$\sfT$:
\begin{equation}\label{eq:SHE-mod}
	\begin{cases}
	\rule[-1.2em]{0pt}{2.5em}\displaystyle
	\partial_t \tilde u(t,x)
	= \frac{\sfs}{2} \, \Delta \tilde u(t,x) +
	\sqrt{\sfs} \, \sqrt{\tfrac{\sfT}{\sfs}} \, \beta_N \, \tilde u(t,x) \,
	\xi^{\eps \sqrt{\sfs}}(t,x) \\
	\rule[-.8em]{0pt}{2.2em}\displaystyle
	\tilde u(0,\cdot) \equiv \frac{1}{\sfT}
	\end{cases} \,,
\end{equation}
where $\tilde \xi^{a}(t,x) := \frac{1}{\sqrt{\sfs}} \, \xi^{a/\sqrt{\sfs}}(t, \frac{x}{\sqrt{\sfs}})$
has the same distribution as~$\xi^a(t,x)$. Again, for an accurate comparison with directed polymers,
we should replace $\sqrt{\sfT/\sfs} \, \beta_N$ in \eqref{eq:SHE-mod}
by $\beta_\epsilon$ from \eqref{beta-eps2}.

\section*{Acknowledgements}
We wish to thank Christophe Garban for asking us the question that led to the present paper, and
for interesting discussions during our visit to Lyon. We also thank Jeremy Clark for informing us of
his ongoing work \cite{CM22}. R.S.~is supported by NUS grant R-146-000-288-114. N.Z.~is supported
by EPRSC through grant EP/R024456/1, F.C.~is supported by INdAM/GNAMPA.

The completion of this work coincided with the passing of Francis Comets. We wish to express our deepest
gratitude to Francis and dedicate this work to him for the inspiration he provided us over the years.

\end{document}